      \def\dC{{\mathbb C}}
   \def\dN{{\mathbb N}}   
      \def\dR{{\mathbb R}}
   \def\dZ{{\mathbb Z}}
      \def\cF{{\mathcal F}}
\def\cG{{\mathcal G}}      
   \def\cN{{\mathcal N}}
\def\bR{{\mathbf R}}  
\def\supp{\operatorname{supp}}
\def\ran{\operatorname{ran}}
\def\dom{\operatorname{dom}}
\def\span{\operatorname{span}}
\def\Im{\operatorname{Im}}
\def\f{\varphi}
\def\downbar#1{
\setbox10=\hbox{$#1$}
   \dimen10=\ht10 \advance\dimen10 by 2.5pt
   \ifdim \dimen10<15pt 
      \advance\dimen10 by -0.5pt
      \dimen11=\dimen10
      \advance\dimen10 by 2.5pt
      \lower \dimen11
   \else \lower \ht10 \fi
   \hbox {\hskip 1.5pt \vrule height \dimen10 depth \dp10}\relax}
 \def\upbar#1{
 \setbox10=\hbox{$#1$}
    \dimen10=\ht10 \advance\dimen10 by \dp10 \advance\dimen10 by 2.5pt
    \ifdim \dimen10<15pt 
       \advance\dimen10 by 2pt \fi
    \raise 2.5pt \hbox {\hskip -1.5pt \vrule height \dimen10}\relax}
\def\cfr#1#2{
 \downbar{#2} \hskip -1.5pt {\; #1 \; \over \thinspace \  #2}\upbar{#1}}
\newtheorem{theorem}{Theorem}[section]
\newtheorem{proposition}[theorem]{Proposition}
\newtheorem{corollary}[theorem]{Corollary}
\newtheorem{lemma}[theorem]{Lemma}
\newtheorem{definition}[theorem]{Definition}
\theoremstyle{definition}
\newtheorem{remark}[theorem]{Remark}
\numberwithin{equation}{section}
\begin{document}
\title[The Jacobi matrices approach to Nevanlinna-Pick problems]
{The Jacobi matrices approach to Nevanlinna-Pick problems}
\author{Maxim  Derevyagin}

\address{Department of Mathematics MA 4-5\\
Technische Universit\"at Berlin\\
Strasse des 17. Juni 136\\
D-10623 Berlin\\
Germany}

\email{derevyagin.m@gmail.com}
\date{\today}

\subjclass{Primary 47A57, 47B36; Secondary 41A21, 42C05.}
\keywords{Jacobi matrix, linear pencil, Nevanlinna-Pick problem, 
multipoint Pad\'e approximants.}

\begin{abstract}
A modification of the well-known step-by-step process
for solving Nevan\-linna-Pick problems in the class of $\bR_0$-functions
gives rise to a linear pencil $H-\lambda J$, where 
 $H$ and $J$ are Hermitian tridiagonal matrices. 
First, we show that  $J$ is a positive operator.
Then it is proved that the corresponding Nevanlinna-Pick problem has a unique solution
iff the densely defined symmetric operator $J^{-\frac{1}{2}}HJ^{-\frac{1}{2}}$ is self-adjoint
and some criteria for this operator to be self-adjoint are presented. Finally,
by means of the operator technique, we obtain that multipoint diagonal Pad\'e approximants  to a unique solution $\varphi$
of the Nevanlinna-Pick problem converge to $\varphi$ locally uniformly in $\dC\setminus\dR$.
The proposed scheme extends the classical Jacobi matrix approach to moment problems
and Pad\'e approximation for $\bR_0$-functions.
\end{abstract}

\maketitle

\section{Introduction}
The connection with Jacobi matrices has led to numerous applications 
of spectral techniques for self-adjoint operators in the theory of moment problems, orthogonal polynomials
on the real line, and Pad\'e approximation.
Let us recall some basic ideas of this interplay.
First, note that one of the key tools in relating these theories is
the class $\bR_0$ of all functions having the representation
\begin{equation}\label{Mar_f}
\varphi(\lambda)=\int_{\dR}\frac{d\sigma(t)}{t-\lambda},
\end{equation}
where $\sigma$ is a probability measure, that is,
$\int_{\dR}{d\sigma(t)}=1$. 
If the support $\supp\sigma$ of $\sigma$ is contained in $[\alpha,\beta]$ 
we will say that $\varphi\in\bR[\alpha,\beta]$.

Consider a probability measure $\sigma$ such that all the moments
\begin{equation}\label{sMP}
s_n:=\int_{\dR}t^nd\sigma(t),\quad n\in\dZ_+:=\dN\cup\{0\}
\end{equation}
are finite. In this case, the corresponding function $\f$ has the following
asymptotic expansion
\begin{equation}\label{MP_np}
\f(\lambda)=-\frac{s_{0}}{\lambda}-\frac{s_{1}}{\lambda^2}
-\dots-\frac{s_{2n}}{\lambda^{2n+1}}+o\left(\frac{1}{\lambda^{2n+1}}\right),
\quad\lambda\widehat{\rightarrow }\infty,
\end{equation}
for every $n\in\dZ_+$ (here and throughout in the sequel $\lambda\widehat{\rightarrow }\infty$ means
that $\lambda$ tends to $\infty$ non-tangentially, that is, inside the
sector $\varepsilon<\arg \lambda<\pi-\varepsilon$ for some
$\varepsilon>0$). In view of the Hamburger-Nevanlinna theorem~\cite{A}, 
the classical moment problem reads as follows.

\noindent{\bf Hamburger moment problem.} 
Is the function $\f\in\bR_0$ satisfying~\eqref{MP_np} uniquely determined 
by the sequence $\{s_j\}_{j=0}^{\infty}$ of moments?

The moment problem is called determinate if $\f$ is uniquely determined. Otherwise
the moment problem is said to be indeterminate.
In fact, one can give an answer to the question in terms of the underlying Jacobi operators 
generated by Jacobi matrices. To see Jacobi matrices
in this context, note that one can expand $\varphi$ into the following
continued fraction
\begin{equation}\label{Jfraction}
\varphi(\lambda)=-\frac{1}{\lambda-a_0-\displaystyle{\frac{b_0^2}
{\lambda-a_1-\displaystyle{\frac{b_1^2}{\ddots}}}}}=
-\cfr{1}{\lambda-a_0}
-\cfr{b_0^2}{\lambda-a_1}
-\cfr{b_{1}^2}{\lambda-a_2}
-\cdots,
\end{equation}
where $a_j$ are real numbers, $b_j$ are positive numbers (see~\cite{A},~\cite{wall},~\cite{NikSor}).
Moreover, numbers $a_j$ and $b_j$ can be explicitly expressed in terms of the moments
$s_0,\dots,s_{2j+1}$~\cite{A}.
Continued fractions of the form~\eqref{Jfraction} are called J-fractions~\cite{JT},~\cite{wall}.
To the continued fraction~\eqref{Jfraction} one can associate a Jacobi matrix $H$ 
and its truncation $H_{[0,n-1]}$
\[
H=\left(%
\begin{array}{cccc}
  a_0 & b_0 &  &  \\
  b_0 & a_1 & b_1 &  \\
      & b_1 & a_2 & \ddots \\
      &     & \ddots & \ddots \\
\end{array}%
\right),\quad
H_{[0,n-1]}=\left(%
\begin{array}{cccc}
  a_0 & b_0    &  &  \\
  b_0 & a_1    & \ddots &  \\
      & \ddots & \ddots & b_{n-2} \\
      &        & b_{n-2}& a_{n-1}     \\
\end{array}%
\right).
\]
Let $\ell^2_{[0,\infty)}$ denote a Hilbert space of complex
square summable sequences $(x_0,x_1,\dots)$ equipped with the inner
product
\[
(x,y)=\sum_{i=0}^{\infty}x_i\overline{y}_i,\quad x,y\in\ell^2_{[0,\infty)}.
\]
Now, in the standard way, we can define a minimal closed operator 
$H$ acting in $\ell^2$ generated by the matrix $H$~\cite{A},~\cite{Be}.
We will denote the domain of $H$ and the range of $H$ by
$\dom H$ and $\ran H$, respectively. It is easy to see that $H$ is symmetric, i.e.
\[
(Hx,y)=(x,Hy),\quad x,y\in\dom H.
\]
Moreover, it is well known that $H$ is self-adjoint if and only if the corresponding moment problem
is determinate and the solution of the problem admits the representation
\[
\varphi(\lambda)=\left((H-\lambda)^{-1}e_0,e_0\right)
\]
where $e=(1,0,\dots)^\top$ is a column vector (see~\cite{A},~\cite{S98}). 
In the indeterminate case, a description of all $\f\in\bR_0$
satisfying~\eqref{MP_np} can be found in~\cite{A}, \cite{Berg}, \cite{S98} 
(see also~\cite{DM} where the operator approach to truncated moment problems was proposed).
In both cases, we have
\[
-\frac{Q_n(\lambda)}{P_n(\lambda)}=\left((H_{[0,n-1]}-\lambda)^{-1}e_0,e_0\right)=
-\cfr{1}{\lambda-a_0}-
\dots
-\cfr{b_{n-2}^2}{\lambda-a_{n-1}},
\]
where $P_n$ are orthogonal polynomials with respect 
to $\sigma$, and $Q_n$ are polynomials of the second kind (see~\cite{A},~\cite{NikSor},~\cite{S98}).
It is an elementary fact of the continued fraction theory 
(see, for instance,~\cite{A},~\cite{BGM},~\cite{JT}) that
\begin{equation}\label{dPA}
\varphi(\lambda)+\frac{Q_n(\lambda)}{P_n(\lambda)}=
O\left(\frac{1}{\lambda^{2n+1}}\right),\quad \lambda\widehat{\rightarrow }\infty.
\end{equation}
In other words, relation~\eqref{dPA} means that the rational function $-Q_n/P_n$ is
the nth diagonal Pad\'e approximant to $\varphi$ at $\infty$ 
(for more details on Pad\'e approximants see~\cite{BGM}). Now, we see that
in the self-adjoint case, convergence of diagonal Pad\'e approximants appears as 
the strong resolvent convergence of the finite matrix approximations $H_{[0,n]}$
to $H$.
So, if the moment problem is determinate then the corresponding diagonal Pad\'e approximants
converge to the solution $\f$ locally uniformly in $\dC\setminus\dR$. 
This statement for the class $\bR[\alpha,\beta]$ is known as the Markov theorem~\cite{NikSor}.
The above-described scheme has been recently extended to the case of rational 
perturbations of Nevanlinna functions~\cite{D09}, \cite{DD}, \cite{DD09}.
Also, the scheme was adapted to the case of complex Jacobi matrices~\cite{Beck} 
and generalized to the case of band matrices~\cite{Martinez99}.

The main goal of this paper is to generalize the scheme to the case of Nevanlinna-Pick problems 
and to prove convergence of related multipoint diagonal Pad\'e approximants. 
To show our purpose more precisely, let us recall that 
the classical Hamburger moment problem is the limiting case of the following problem 
(see~\cite{A},~\cite{Garnett},~\cite{KN}).

\noindent{\bf Nevanlinna-Pick problem.} Let $\{z_k\}_{k=0}^{\infty}$ be a sequence
of distinct numbers from the upper half plane $\dC_+$
and let $\f\in\bR_0$. Define numbers $w_j:=\f(z_j)$. Is the function $\f\in\bR_0$ satisfying
the interpolation relation $\f(z_j)=w_j$, $j\in\dZ_+$, uniquely determined by the given data 
$\{z_k\}_{k=0}^{\infty}$, $\{w_k\}_{k=0}^{\infty}$?

In view of the classical uniqueness theorem for analytic functions, the answer to this question is trivial
if the sequence $\{z_k\}_{k=0}^{\infty}$ has at least one accumulation point in $\dC_+$.
So, {\bf in what follows we will suppose that the sequence $\{z_k\}_{k=0}^{\infty}$ does not have any accumulation point
in $\dC_+$}. In other words, all the accumulation points of the sequence $\{z_k\}_{k=0}^{\infty}$ lie in $\dR$.

Similarly to the moment problem case, 
the Nevanlinna-Pick problem is called determinate, if $\f$ is uniquely determined. Otherwise
the Nevanlinna-Pick problem is said to be indeterminate.
We should also note that diagonal Pad\'e approximants at $\infty$ are the limiting case
of the following multipoint diagonal Pad\'e approximants.
\begin{definition}[\cite{BGM}]\label{PadeInt}
The  nth multipoint diagonal Pad\'e approximant  for the function $\varphi$ at
the points $\{z_0,\overline{z}_0,\dots,z_j,\overline{z}_j,\dots\}$ is defined as a ratio
$-Q_n/P_n$ 
of two polynomials $Q_n$, $P_n$ of degree at most $n-1$ and $n$, respectively, such that
the function $P_n\f+Q_n$ vanishes at the points 
$z_0$, $\overline{z}_0$, \dots, $z_{n-1}$, $\overline{z}_{n-1}$.
\end{definition}
It appears that the problem of finding 
multipoint diagonal Pad\'e approximants for the $\bR_0$-function $\f$ at the points
$\{z_0,\overline{z}_0,\dots,z_j,\overline{z}_j,\dots\}$
 is closely related to a 
continued fraction expansion of the following type
\begin{equation}\label{R2}
-\cfr{1}{a^{(2)}_0\lambda-a_0^{(1)}}-
    \cfr{b_0^2(\lambda-z_0)(\lambda-\overline{z}_0)}{a^{(2)}_1\lambda-a_1^{(1)}}-
    \cfr{b_{1}^2(\lambda-z_1)(\lambda-\overline{z}_1)}{a^{(2)}_2\lambda-a_2^{(1)}}-
    \dots,
\end{equation}
where $a^{(1)}_j$ are real numbers and $a_j^{(2)}$, $b_j$ are positive numbers.
This continued fraction gives rise to a tridiagonal linear pencil $H-\lambda J$,
where $H$ and $J$ are semi-infinite tridiagonal matrices~\cite{DZ09} (see also~\cite{Zhe_BRF} where
tridiagonal linear pencils associated with general continued fractions of type~\eqref{R2}
were introduced). 
In this paper,  we firstly obtain that $J$ generates a positive operator. Then we introduce a densely defined symmetric operator
$J^{-\frac{1}{2}}HJ^{-\frac{1}{2}}$ and present criteria for this operator to be self-adjoint.
Next, we prove that the Nevannlina-Pick problem in question has a unique solution if and only if $J^{-\frac{1}{2}}HJ^{-\frac{1}{2}}$ is self-adjoint.
Finally, we show that if $J^{-\frac{1}{2}}HJ^{-\frac{1}{2}}$ is self-adjoint then the locally uniform convergence of 
the multipoint diagonal Pad\'e approximants
\[
-\frac{Q_{n+1}(\lambda)}{P_{n+1}(\lambda)}=\left((J_{[0,n]}^{-\frac{1}{2}}H_{[0,n]}J_{[0,n]}^{-\frac{1}{2}}-\lambda)^{-1}
J_{[0,n]}^{-\frac{1}{2}}e_0,J_{[0,n]}^{-\frac{1}{2}}e_0\right)
\]
to the unique solution 
\[
\varphi(\lambda)=\left((J^{-\frac{1}{2}}HJ^{-\frac{1}{2}}-\lambda )^{-1}J^{-\frac{1}{2}}e_0,J^{-\frac{1}{2}}e_0\right)
\]
of the Nevanlinna-Pick problem arises as the resolvent convergence.

The paper is organized as follows. In Section~2 we present the step-by-step process for solving 
the Nevanlinna-Pick problems and associated sequences of polynomials. 
In Section~3, a tridiagonal linear pencil is introduced and 
basic properties of the operator $J$ are given. 
The one-to-one correspondence between tridiagonal linear pencils and the Nevanlinna-Pick problems in question
is shown in Section~4. The next session is concerned with the Weyl circles.
Section~6 reveals the underlying symmetric operators. In Section~7, we characterize
the determinacy of the underlying Nevanlinna-Pick problems in terms
of the self-adjointness of $J^{-\frac{1}{2}}HJ^{-\frac{1}{2}}$.
After that, in Section~8, for the determinate case, we prove the locally uniform convergence
of multipoint diagonal Pad\'e approximants for $\bR_0$-functions.

\section{The modified multipoint Schur algorithm}
As is known, the Schur transformation is a powerful tool in solving moment and interpolation
problems (see~\cite{A}, \cite{ADL}).
The starting point of our analysis is the following modification of the Schur transformation.

\begin{proposition}[cf. \cite{DZ09}]\label{schur_step}
Let $\varphi\in\bR_0$ and let $z\in\dC_+$ be a
fixed number. Then there exist unique numbers $a^{(1)},\,a^{(2)}\in\dR$
and $b>0$ such that the function $\f_1$ defined by the equality
\begin{equation}\label{flt_step}
\f(\lambda)=-\frac{1}{a^{(2)}\lambda-a^{(1)}+b^2(\lambda-z)(\lambda-\overline{z})\f_1(\lambda)}
\end{equation}
belongs to $\bR_0\cup\{0\}$, that is, $\varphi_1$ has the representation~\eqref{Mar_f} with a probability measure
in case $\varphi_1\not\equiv 0$. Moreover, we have that
\begin{equation}\label{f_b}
b^2=a^{(2)}-1.
\end{equation}
\end{proposition}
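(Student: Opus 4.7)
The plan is to invert $\varphi$, split off a linear part interpolating the pair $\{z,\bar z\}$, and identify what remains as $b^2(\lambda-z)(\lambda-\bar z)$ times another function of class $\bR_0\cup\{0\}$. Set $\psi(\lambda):=-1/\varphi(\lambda)$; since $\varphi\in\bR_0$ maps $\dC_+$ into $\dC_+$ and satisfies $\lambda\varphi(\lambda)\to-1$ nontangentially, $\psi$ is itself a Nevanlinna function and admits the representation
\[
\psi(\lambda)=\lambda+\gamma+\int_\dR\left(\frac{1}{t-\lambda}-\frac{t}{1+t^2}\right)d\tau(t),
\]
with $\gamma\in\dR$, $\tau\geq 0$, $\int d\tau(t)/(1+t^2)<\infty$, and with linear coefficient exactly $1$ because $\psi(\lambda)\sim\lambda$ at infinity.

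For uniqueness, if \eqref{flt_step} holds then evaluating at $\lambda=z$ kills the factor $(\lambda-z)(\lambda-\bar z)$, giving $\psi(z)=a^{(2)}z-a^{(1)}$; reality of $a^{(1)},a^{(2)}$ together with $\Im z>0$ determines $a^{(2)}=\Im\psi(z)/\Im z$ and $a^{(1)}=a^{(2)}\operatorname{Re}z-\operatorname{Re}\psi(z)$, while comparing $\lambda$-coefficients at infinity (using $\varphi_1(\lambda)\sim-\lambda^{-1}$ when $\varphi_1\not\equiv 0$) forces $b^2=a^{(2)}-1$. For existence, with those $a^{(1)},a^{(2)}$ fixed I would telescope the Nevanlinna integral twice, using $\psi(z)=a^{(2)}z-a^{(1)}$, $\psi(\bar z)=\overline{\psi(z)}$, and the identity $\tfrac{1}{t-\lambda}-\tfrac{1}{t-\zeta}=\tfrac{\lambda-\zeta}{(t-\lambda)(t-\zeta)}$, to reach
\[
\psi(\lambda)-a^{(2)}\lambda+a^{(1)}=(\lambda-z)(\lambda-\bar z)\int_\dR\frac{d\tau(t)}{|t-z|^2\,(t-\lambda)},
\]
and then set $\varphi_1(\lambda):=\int d\sigma_1(t)/(t-\lambda)$ with $d\sigma_1(t):=d\tau(t)/(b^2|t-z|^2)$ for $b^2:=a^{(2)}-1$.

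The verification $\varphi_1\in\bR_0\cup\{0\}$ reduces to checking $\sigma_1(\dR)=1$, and here the same integral does double duty: the Nevanlinna representation yields $\Im\psi(z)/\Im z=1+\int d\tau(t)/|t-z|^2$, so $a^{(2)}-1=\int d\tau(t)/|t-z|^2=b^2\,\sigma_1(\dR)$, which together with $b^2=a^{(2)}-1$ forces $\sigma_1(\dR)=1$. Strict positivity $b^2>0$ in the nondegenerate case follows from Cauchy--Schwarz applied to $\varphi(z)=\int d\sigma(t)/(t-z)$: the bound $|\varphi(z)|^2\leq\int d\sigma(t)/|t-z|^2$ gives $a^{(2)}\geq 1$, with equality only when $\sigma$ is a single Dirac mass (equivalently, $\varphi$ is rational of degree one and $\varphi_1\equiv 0$). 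The main obstacle is exactly this matching of the two roles of $\int d\tau(t)/|t-z|^2$ --- as $a^{(2)}-1$ coming from $\Im\psi(z)/\Im z$, and as the total mass of $\sigma_1$ --- which is what simultaneously delivers \eqref{f_b} and the membership $\varphi_1\in\bR_0\cup\{0\}$.
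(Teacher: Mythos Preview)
Your argument is correct. Both you and the paper determine $a^{(1)},a^{(2)}$ by evaluating $\psi=-1/\varphi$ at $\lambda=z$ (and $\bar z$), but the routes then diverge. The paper treats
\[
\widetilde{\varphi_1}(\lambda)=-\frac{\psi(\lambda)-(a^{(2)}\lambda-a^{(1)})}{(\lambda-z)(\lambda-\bar z)}
\]
as a black box, invoking the Schwarz lemma through a reference to \cite{DZ09} to assert the representation $\widetilde{\varphi_1}(\lambda)=\int d\mu(t)/(t-\lambda)$; it then sets $b^2:=\int d\mu$ and recovers \eqref{f_b} only afterward, by letting $\lambda=iy\to\infty$ in that integral. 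You instead begin with the Herglotz representation of $\psi$ itself and telescope twice, producing the explicit identity
\[
\psi(\lambda)-a^{(2)}\lambda+a^{(1)}=(\lambda-z)(\lambda-\bar z)\int_{\dR}\frac{d\tau(t)}{|t-z|^{2}(t-\lambda)},
\]
which simultaneously identifies $d\mu=d\tau/|t-z|^{2}$ and yields \eqref{f_b} via $a^{(2)}=\Im\psi(z)/\Im z=1+\int d\tau/|t-z|^{2}$ without a separate asymptotic step. Your approach is self-contained and makes the structure of $\varphi_1$ transparent; the paper's is terser but offloads the analytic core to an external lemma. You are also more explicit than the paper about the degenerate case $\sigma=\delta_c$, in which $b^{2}=a^{(2)}-1=0$ and the asserted strict positivity (hence uniqueness) of $b$ breaks down; in the paper this edge case is tacitly excluded by the standing non-rationality assumption made just after the proposition.
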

\begin{proof}
To see that the numbers $a^{(1)}$, $a^{(2)}$ are
uniquely determined,
let us substitute $\lambda$ for $z$ and $\overline{z}$
in~\eqref{flt_step}. We thus get
\begin{equation}\label{f_w}
a^{(2)}z-a^{(1)}=-\frac{1}{\f(z)},\quad
a^{(2)}\overline{z}-a^{(1)}=-\frac{1}{\f(\overline{z})}.
\end{equation}
Eliminating from the above relations $a^{(1)}$ and $a^{(2)}$, one
can obtain the following formulas
\begin{equation}\label{f_a}
a^{(1)}=\left(\int_{\dR}\frac{td\sigma(t)}{|t-z|^2}\right)
\left|\int_{\dR}\frac{d\sigma(t)}{t-z}\right|^{-2},
\quad
a^{(2)}=\left(\int_{\dR}\frac{d\sigma(t)}{|t-z|^2}\right)
\left|\int_{\dR}\frac{d\sigma(t)}{t-z}\right|^{-2}.
\end{equation}
Further, it follows from the Schwartz lemma that 
\begin{equation}\label{flt_t}
\widetilde{\f_1}(\lambda)=-\frac{\frac{1}{\f(\lambda)}+a^{(2)}\lambda-a^{(1)}}{(\lambda-z)(\lambda-\overline{z})}=
\int_{\dR}\frac{d\mu(t)}{t-\lambda}
\end{equation}
(the proof of this fact is in line with 
that of \cite[Lemma~3.1]{DZ09}).
Choosing $b>0$ in the following way
\[
b^2=\int_{\dR}d\mu(t)
\]
and defining $\f_1:=\widetilde{\f}_1/b^2$ we get that 
the function $\f_1$ possesses the integral
representation~\eqref{Mar_f} with a probability measure.
 Finally, by taking $\lambda=iy$ and $y\to\infty$ 
in~\eqref{flt_t} we get~\eqref{f_b}.
\end{proof}

\begin{remark}
It should be noted that for $\f\in\bR[\alpha,\beta]$ this modification of the Schur algorithm 
was presented in~\cite[Lemma~3.1]{DZ09}. However, its proof is valid for $\f\in\bR_0$. 
A similar transformation for Caratheodory functions was proposed in~\cite{DG}.
\end{remark}

Let $\varphi$ be a non-rational function of the class $\bR_0$, i.e.  $\varphi$ admits 
the representation~\eqref{Mar_f} with a probability measure which has an infinite support.
Let also an infinite sequence $\{z_k\}_{k=0}^{\infty}\subset\dC_+$ of distinct numbers be given. 
Since $\f$ is not rational the given data give rise to infinitely
many steps of the step-by-step process. So, we have infinitely
many linear fractional transformations of the
form~\eqref{flt_step} which lead to the following continued
fraction
\begin{equation}\label{ContF}
-\cfr{1}{a^{(2)}_0\lambda-a_0^{(1)}}-
    \cfr{b_0^2(\lambda-z_0)(\lambda-\overline{z}_0)}{a^{(2)}_1\lambda-a_1^{(1)}}-
    \cfr{b_{1}^2(\lambda-z_1)(\lambda-\overline{z}_1)}{a^{(2)}_2\lambda-a_2^{(1)}}-
    \dots
\end{equation}
(for more details, see~\cite{DZ09}). It should be noted that general continued fractions associated with finding 
multipoint Pad\'e approximants were introduced in~\cite{HN89mp} and studied 
in~\cite{HN89},~\cite{IM95}.

It is immediate from the construction that the $(n+1)$th convergent 
of~\eqref{ContF} 
\[
-\frac{Q_{n+1}(\lambda)}{P_{n+1}(\lambda)}=
-\cfr{1}{a^{(2)}_0\lambda-a_0^{(1)}}
-\cdots -
\cfr{b_{n-1}^2(\lambda-z_{n-1})(\lambda-\overline{z}_{n-1})}{a^{(2)}_n\lambda-a_n^{(1)}}
\]
satisfies the following interpolation relation
\begin{equation}\label{IP}
\f(z_j)=-\frac{Q_{n+1}(z_j)}{P_{n+1}(z_j)}, \quad j=0,\dots,n.
\end{equation} 
Since $\f\in\bR_0$ and the coefficients $a^{(1)}_j$, $a^{(2)}_j$, $b_j$ are real, one also has
\[
\f(\overline{z}_j)=-\frac{Q_{n+1}(\overline{z}_j)}{P_{n+1}(\overline{z}_j)},\quad j=0,\dots,n.
\]
So, we have just concluded the following.
\begin{proposition}\label{MP_con} 
The rational function $-Q_{n+1}/P_{n+1}$ is the (n+1)th multipoint diagonal
Pad\'e approximant to $\varphi$ at the points $\{z_0,\overline{z}_0,\dots,z_j,\overline{z}_j,\dots\}$.
\end{proposition}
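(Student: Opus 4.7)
The plan is to verify both clauses of Definition~\ref{PadeInt} for the rational function $-Q_{n+1}/P_{n+1}$: the degree bounds $\deg Q_{n+1}\leq n$ and $\deg P_{n+1}\leq n+1$, and the vanishing of $P_{n+1}\varphi+Q_{n+1}$ at the $2(n+1)$ prescribed nodes $z_0,\bar{z}_0,\dots,z_n,\bar{z}_n$.

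The vanishing condition is essentially a rereading of the identities displayed just above the proposition. Clearing denominators in \eqref{IP} yields
\[
P_{n+1}(z_j)\varphi(z_j)+Q_{n+1}(z_j)=0,\qquad j=0,\dots,n,
\]
and the same operation on the conjugate identity produces the analogous equality at each $\bar{z}_j$. Since $\varphi$ is holomorphic on $\dC\setminus\dR$ and $P_{n+1},Q_{n+1}$ are polynomials, the analytic function $P_{n+1}\varphi+Q_{n+1}$ vanishes at each of the $2(n+1)$ interpolation nodes, which is exactly what the definition demands.

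For the degree bounds, I would appeal to the standard three-term recurrence for the numerators and denominators of the successive convergents of the continued fraction \eqref{ContF}:
\[
P_{k+1}(\lambda)=(a_k^{(2)}\lambda-a_k^{(1)})P_{k}(\lambda)-b_{k-1}^2(\lambda-z_{k-1})(\lambda-\bar{z}_{k-1})P_{k-1}(\lambda),
\]
with the same recurrence for $Q_{k+1}$ and initial conditions read off from the first two convergents of \eqref{ContF}. A routine induction yields $\deg P_k=k$ and $\deg Q_k=k-1$ for every $k\geq 1$; the positivity $a_k^{(2)}>0$, $b_k>0$ established in Proposition~\ref{schur_step} rules out any accidental cancellation of leading coefficients.

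There is no genuine obstacle: once the interpolation identity \eqref{IP} and its Schwarz-reflected partner are in hand, the proposition is little more than a translation between the step-by-step construction of \eqref{ContF} and the defining property of multipoint Pad\'e approximants. The only mildly technical point is the leading-coefficient check in the convergent recurrence, which goes through cleanly via the sign constraints on the pencil coefficients.
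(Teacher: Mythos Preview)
Your proposal is correct and follows the same approach as the paper: the paper's ``proof'' is precisely the discussion preceding the proposition---the interpolation relation~\eqref{IP} and its reflected version---capped by ``So, we have just concluded the following.'' You have simply unpacked this and added the degree check via the recurrence~\eqref{rec_rel}, which the paper records immediately afterwards.

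One minor remark: your care about leading-coefficient cancellation is unnecessary. Definition~\ref{PadeInt} asks only for $\deg Q_{n+1}\le n$ and $\deg P_{n+1}\le n+1$, and these upper bounds follow directly from the recurrence and initial data by induction, regardless of whether any top-degree terms cancel. (If you do want exact degree, positivity of $a_k^{(2)},b_k$ alone is not enough, since the recurrence subtracts two positive contributions; the clean argument is that the leading coefficient of $P_{j}$ equals $\det J_{[0,j-1]}>0$ by the positivity of $J_{[0,j-1]}$, which the paper establishes later in Propositions~\ref{positive} and~\ref{detf}.)
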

It is well known that denominators and numerators of convergents of a continued fraction satisfy
a three-term recurrence relation (see, for instance,~\cite{JT}). In particular, for
the continued fraction~\eqref{ContF} the recurrence relation takes the
following form
\begin{equation}\label{rec_rel}
u_{j+1}-(a_j^{(2)}\lambda-a_j^{(1)})u_{j}+b_{j-1}^2(\lambda-z_{j-1})(\lambda-\overline{z}_{j-1})u_{j-1}=0,
\quad j\in\dN.
\end{equation}
Further, the polynomials $P_j$ of the first kind  are solutions
$u_j=P_j(\lambda)$ of the system~\eqref{rec_rel} with the initial
conditions
\begin{equation}\label{InConP}
u_{0}=1,\quad u_1=a_0^{(2)}\lambda-a_0^{(1)}.
\end{equation}
Similarly, the polynomials of the second kind $Q_j(\lambda)$ are solutions $u_j=Q_j(\lambda)$ of the
system~\eqref{rec_rel} subject to the following initial conditions
\begin{equation}\label{InConP2}
u_{0}=0,\quad u_1=-1.
\end{equation}
\begin{remark}
Note that the polynomials $P_j$ are 
orthogonal with respect to the varying measures 
$\displaystyle{ \frac{d\sigma(t)} {\prod_{k=0}^{j-1}|t-z_k|^2} }$ 
(see~\cite{Gon78},~\cite{Lo78},~\cite[Section~6.1]{StTo}).
Moreover, for $\f\in\bR[\alpha,\beta]$ an operator treatment of the relation 
of the polynomials $P_j$ to orthogonal rational functions 
was presented in~\cite{DZ09} (see~\cite[Section~9.5]{Bul}, where this relation is also discussed).
It should be also remarked that some orthogonality relations for polynomials and rational 
functions related to general continued fractions of type~\eqref{ContF} 
were obtained in~\cite{IM95},~\cite{Zhe_BRF}
(see also~\cite{Zh_PadeTable}, where biorthogonality properties of rational functions related to
multipoint Pad\'e approximation were studied and concrete examples connected with generalized
hypergeometric functions were constructed).
\end{remark}

\section{Tridiagonal linear pencils associated with $\bR_0$-functions}

In order to see linear pencils in our context, let us note that 
the recurrence relation~\eqref{rec_rel} can be renormalized to the
following one
\begin{equation}\label{r_rl}
(\overline{\mathfrak{b}}_{j-1}-\lambda{\mathfrak{d}}_{j-1})\widehat{u}_{j-1}+
(\mathfrak{a}_j-\lambda\mathfrak{c}_j)\widehat{u}_{j}+
(\mathfrak{b}_j-\lambda\mathfrak{d}_j)\widehat{u}_{j+1}=0,\quad j\in\dN,
\end{equation}
where the numbers $\mathfrak{a}_j$, $\mathfrak{b}_j$, $\mathfrak{c}_j$, $\mathfrak{d}_j$
are defined as follows  
\[
\mathfrak{a}_j=a_j^{(1)},\quad \mathfrak{b}_j={z}_jb_j,\quad
\mathfrak{c}_j=a_j^{(2)},\quad \mathfrak{d}_j=b_j,\quad j\in\dZ_+,
\]
and the transformation $u\to\widehat{u}$ has the following form
\begin{equation}\label{wTransform}
\widehat{u}_0=u_0,\quad
\widehat{u}_j={\displaystyle\frac{u_j}{b_0\dots
b_{j-1}(z_0-\lambda)\dots(z_{j-1}-\lambda)}},\quad j\in\dN.
\end{equation}
Thus, we have two associated sequences $\widehat{P}_j$ and $\widehat{Q}_j$ of rational functions obtained from the polynomial sequences 
${P}_j$ and ${Q}_j$, respectively, by means of the transformation~\eqref{wTransform}.
In contrast to the polynomial case, the rational functions $\widehat{P}_j$ are not orthogonal with respect to the original
measure $\sigma$ since 
\begin{equation}\label{helpfororthR1}
\int_{\dR}\widehat{P}_0(t)\widehat{P}_1(t)d\sigma(t)=
\int_{\dR}\widehat{P}_1(t)d\sigma(t)=1-a_0^{(2)}
\end{equation}
and, due to~\eqref{f_a}, $1-a_0^{(2)}\ne 0$ for any $z_0\in\dC_+$.
Despite this, some orthogonality properties remain valid (see~\cite[Theorem~2.10]{BDZh}).
It should be also noted that some orthogonal proper rational functions satisfy a relation similar 
to~\eqref{r_rl}~\cite[p.~541]{Atk} (see also~\cite{Bul} for the recurrence relations for orthogonal rational functions). 

The relation~\eqref{r_rl} naturally leads to a linear pencil $H-\lambda J$, where
\[
H=\left(%
\begin{array}{cccc}
  \mathfrak{a}_0 & \mathfrak{b}_0 &  &  \\
 \overline{\mathfrak{b}}_0 & \mathfrak{a}_1 & \mathfrak{b}_1 &  \\
      & \overline{\mathfrak{b}}_1 & \mathfrak{a}_2 & \ddots \\
      &     & \ddots & \ddots \\
\end{array}%
\right),\quad\
J=\left(%
\begin{array}{cccc}
  \mathfrak{c}_0 & \mathfrak{d}_0 &  &  \\
  \mathfrak{d}_0 & \mathfrak{c}_1 & \mathfrak{d}_1 &  \\
      & \mathfrak{d}_1 & \mathfrak{c}_2 & \ddots \\
      &     & \ddots & \ddots \\
\end{array}%
\right)
\]
are Jacobi matrices. 
For an infinite matrix $A$, we denote by $A_{[j,k]}$ the square sub-matrix
obtained by taking rows and columns $l=j,j+1,...,k \le\infty$. For example,
for finite $j$ and $k$ we have that
\[
H_{[j,k]}=
\begin{pmatrix}
  \mathfrak{a}_j & \mathfrak{b}_j & {\bf 0} \\
  \overline{\mathfrak{b}}_j & \ddots &    \\
      {\bf 0}&  & \mathfrak{a}_k\\
      \end{pmatrix},\quad
      J_{[j,k]}=
\begin{pmatrix}
  \mathfrak{c}_j & \mathfrak{d}_j & {\bf 0} \\
  \mathfrak{d}_j & \ddots &    \\
      {\bf 0}&  & \mathfrak{c}_k\\
      \end{pmatrix}.
\]
By $J$ we also denote the minimal closed operator on $\ell^2_{[0, \infty)}$ generated by the matrix $J$~\cite{A}.
Obviously, $J$ is a symmetric operator. Besides, due to~\eqref{f_b}, we have the relation
$\mathfrak{c}_j=1+\mathfrak{d}_j^2$, which gives us the following factorization of $J$
\begin{equation}\label{mainfac}
J=L^*L=\begin{pmatrix}
  1 & \mathfrak{d}_0 &  &  \\
  0 & 1 & \mathfrak{d}_1 &  \\
      & 0 & 1 & \ddots \\
      &     & \ddots & \ddots \\
\end{pmatrix}
\begin{pmatrix}
  1 & 0 &  &  \\
  \mathfrak{d}_0 & 1 &0 &  \\
      & \mathfrak{d}_1 & 1 & \ddots \\
      &     & \ddots & \ddots \\
\end{pmatrix}.
\end{equation}
The factorization of $J$ allows us to say a bit more about $J$. 
\begin{proposition}\label{positive}
 The operator $J$ is self-adjoint and positive, that is,
\[
(Jx,x)>0,\quad x\in\dom J\setminus\{0\}.
\]
In particular, $\ker J=\{0\}$.
\end{proposition}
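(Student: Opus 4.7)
The plan is to exploit the factorization $J = L^*L$ just derived, in which $L$ is the lower bidiagonal matrix with $1$'s on the diagonal and $\mathfrak{d}_j$'s on the subdiagonal. Since $L$ has unit diagonal, the identities $(Lx)_0 = x_0$ and $(Lx)_j = x_j + \mathfrak{d}_{j-1}x_{j-1}$ make $L$ injective on finitely supported sequences: $Lx=0$ forces inductively $x_0 = 0,\ x_1 = -\mathfrak{d}_0 x_0 = 0,\dots$ Hence for every nonzero finitely supported $x$,
\[
(Jx,x) \;=\; (L^*Lx,x) \;=\; \|Lx\|^2 \;>\; 0,
\]
which establishes strict positivity of the form on the natural core of $J$.

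For self-adjointness I would verify that the deficiency equation $Ju = iu$ admits no nonzero $\ell^2$ solution. Suppose $u \in \ell^2\setminus\{0\}$ were one; row $0$ of the equation forces $u_0 \neq 0$, else the two-term recurrence wipes out all subsequent entries. Truncate $u^{(n)} = (u_0,\ldots,u_n,0,\ldots)$ and compute $(Ju^{(n)},u^{(n)})$ in two ways: via $\|Lu^{(n)}\|^2$, and via the entrywise identity $(Ju)_j = iu_j$ for $j\leq n-1$ together with the single boundary correction at index $n$, obtaining
\[
\|Lu^{(n)}\|^2 \;=\; i\sum_{j=0}^{n}|u_j|^2 \;-\; \mathfrak{d}_n u_{n+1}\overline{u_n}.
\]
Matching imaginary parts gives $\Im(\mathfrak{d}_n u_{n+1}\overline{u_n}) = \sum_{j\leq n}|u_j|^2 \to \|u\|^2 > 0$, so $\mathfrak{d}_n|u_n u_{n+1}|$ stays eventually bounded below by a positive constant; combined with $|u_n u_{n+1}| \to 0$ (from $u \in \ell^2$) this forces $\mathfrak{d}_n \to \infty$. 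Matching real parts together with the trivial bound $\|Lu^{(n)}\|^2 \geq |(Lu^{(n)})_{n+1}|^2 = \mathfrak{d}_n^2|u_n|^2$ and the identity $|z|^2 = \operatorname{Re}(z)^2+\Im(z)^2$ applied to $z = \mathfrak{d}_n u_{n+1}\overline{u_n}$ yields $|u_{n+1}| \geq \mathfrak{d}_n|u_n|$ for every $n$. Iterating from $u_0 \neq 0$ produces $|u_n| \geq |u_0|\prod_{j<n}\mathfrak{d}_j$, which diverges to infinity because $\mathfrak{d}_n \to \infty$---contradicting $|u_n|\to 0$. Conjugation handles the other deficiency index since $J$ is a real Jacobi matrix, so $J_0$ is essentially self-adjoint and $J = \overline{J_0}$ is self-adjoint.

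To finish, I deduce $\ker J = \{0\}$ directly: if $x \in \dom J$ and $Jx = 0$, approximate $x$ in the graph norm by finitely supported $x^{(n)}$; then $\|Lx^{(n)}\|^2 = (Jx^{(n)}, x^{(n)}) \to (Jx,x) = 0$, so $Lx^{(n)} \to 0$ in $\ell^2$, and passing coordinatewise to the limit (each entry of $Lx$ depends on only $x_{j-1},\,x_j$) shows $Lx = 0$ formally, whence $x = 0$ by the triangular invertibility of $L$. The strict form inequality $(Jx,x) > 0$ on $\dom J\setminus\{0\}$ then follows from $(Jx,x) = \|J^{1/2}x\|^2$ together with $\ker J^{1/2} = \ker J = \{0\}$. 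The main technical hurdle is the essential self-adjointness step, since no Carleman-type assumption on $\mathfrak{d}_j$ is available: the decisive pair of estimates $|u_{n+1}| \geq \mathfrak{d}_n |u_n|$ and $\mathfrak{d}_n \to \infty$ must be extracted simultaneously from the specific structural relation $\mathfrak{c}_j = 1 + \mathfrak{d}_j^2$ encoded in the factorization.
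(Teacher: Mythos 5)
Your proposal is correct, and it reaches self-adjointness by a genuinely different route than the paper. Both arguments start from the same factorization $J=L^*L$ encoded in $\mathfrak{c}_j=1+\mathfrak{d}_j^2$, and both use it to get nonnegativity of the form on finitely supported sequences. For self-adjointness, however, the paper works at the real point $\lambda=0$: the same sum-of-squares identity shows $\ker J^*=\{0\}$, hence the formal solution $p_k(0)$ is not in $\ell^2$, i.e. $\sum_k|p_k(0)|^2=\infty$; since in the limit-circle (deficiency $(1,1)$) case all solutions of the recurrence are $\ell^2$ at \emph{every} $\lambda\in\dC$, this forces deficiency $(0,0)$ (alternatively the paper just cites Berezanskii, Theorem~VII.1.4). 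You instead attack the deficiency equation at $\lambda=i$ directly: the truncation identity $\|Lu^{(n)}\|^2=i\sum_{j\le n}|u_j|^2-\mathfrak{d}_nu_{n+1}\overline{u_n}$ gives, from the imaginary part, $\mathfrak{d}_n|u_nu_{n+1}|\ge\|u\|^2/2$ eventually and hence $\mathfrak{d}_n\to\infty$, and from the real part together with $\|Lu^{(n)}\|^2\ge\mathfrak{d}_n^2|u_n|^2$ the growth estimate $|u_{n+1}|\ge\mathfrak{d}_n|u_n|$, which contradicts $u\in\ell^2$ once $u_0\ne0$; conjugation settles the other index. What each approach buys: the paper's argument is short but leans on standard Jacobi-matrix theory (the invariance of the limit-circle property, all solutions $\ell^2$ at real points), whereas yours is self-contained and extracts the needed estimates purely from the structural relation $\mathfrak{c}_j=1+\mathfrak{d}_j^2$, at the price of a longer computation. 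Your closing steps (kernel triviality via graph-norm approximation by finite vectors, then $(Jx,x)=\|J^{1/2}x\|^2>0$ on $\dom J\setminus\{0\}$) are also fine and are in fact slightly more detailed than the paper's corresponding remarks; note only that the nonnegativity of the form must first be extended from the core to all of $\dom J$ by the same graph-limit argument before $J^{1/2}$ is invoked, which your kernel computation already contains implicitly.
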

\begin{proof}
Let us consider the Hermitian form $\left(J\xi,\xi\right)$ on finitely supported sequences $\xi$, that is,
$\xi=({\xi}_0,{\xi}_1,\dots,{\xi}_n,0,0,\dots)^{\top}$.
By virtue of~\eqref{mainfac}, we have that
\[
(J\xi,\xi)=(L\xi,L\xi)\ge 0.
\]
Further, let us prove that  $\ker J^{*}=\{0\}$.
Suppose the converse, that is, there exists $\eta\in\ell^2$ such that
$J^*\eta=0$ and $\eta\ne0$. Taking into account the structure of $J$ we get the equality
\[
0=(J^*\eta,\eta)=|{\eta}_0|^2+|\mathfrak{d}_0\eta_0+{\eta}_1|^2+\dots+
|\mathfrak{d}_{n-1}\eta_{n-1}+{\eta}_{n}|^2+\dots,
\]
which implies $\eta=0$. So, $\ker J=\ker J^*=\{0\}$. This contradiction also shows that
\begin{equation}\label{nonsf}
\sum_{k=0}^{\infty}|p_k(0)|^2=\infty,
\end{equation}
where $p_j$ are polynomials of the first kind associated with $J$.
Since the relation~\eqref{nonsf} doesn't hold true for Jacobi operators with deficiency indices (1,1) (see~\cite{Be},~\cite{S98}),
we obtain that $J$ is self-adjoint.
The statement of the proposition also immediately follows from~\cite[Theorem~VII.1.4]{Be}.
\end{proof}

\begin{remark}
It has been recently proved~\cite{BDZh} that if $\f\in\bR[\alpha,\beta]$ and $z_k\to\infty$ then
\[
(Jx,x)\ge\delta(x,x),\quad x\in\ell^2,
\]
for some $\delta>0$.
Furthermore, in this case the operator $J$ is a compact perturbation of $I$ and, in fact,
the linear pencil $H-\lambda J$ is a compact perturbation
of the classical pencil $H_0-\lambda I$ 
(which corresponds to the limiting case $z_k=\infty$ for $k=0,1,2,\dots$). 
It should be noted that in the case of orthogonal Laurent polynomials
a similar tridiagonal pencil was considered in~\cite{CKvA}. 
Roughly speaking, the case of orthogonal Laurent polynomials corresponds to the multiple
interpolation at 0 and $\infty$, which is known as the strong moment problem on the real line~\cite{JT}.
An operator approach to the strong moment problem was given in~\cite{Hen87}. It is also worth to note that,
in the matrix case, Jacobi type symmetric operators related to the matrix strong moment problems
were presented and studied in~\cite{Simonov1},~\cite{Simonov2}. 
\end{remark}

Since $\ker J=\{0\}$ and $J$ is self-adjoint, we can consider the self-adjoint operator $J^{-\frac{1}{2}}$,
which is not necessarily bounded. However, the following statement holds true.
 
\begin{proposition}
We have that 
\begin{equation}\label{ForRoot1}
e_j\in\dom J^{-\frac{1}{2}},\quad j\in\dZ_+,
\end{equation}
where the vectors $e_0=(1,0,0,\dots)^{\top}$, $e_1=(0,1,0,\dots)^{\top}$, $\dots$  form the standard basis in~$\ell^2$.
\end{proposition}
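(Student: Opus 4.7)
My plan is to exploit the factorization $J=L^*L$ from~\eqref{mainfac}. Since $J^{1/2}$ is self-adjoint and positive with $\ker J^{1/2}=\ker J=\{0\}$ by Proposition~\ref{positive}, the functional calculus gives $\dom J^{-1/2}=\ran J^{1/2}$, so the proposition reduces to showing $e_j\in\ran J^{1/2}$ for each $j\in\dZ_+$.

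The first step is an operator-theoretic observation: the polar decomposition of the closed densely defined operator $L$ reads $L=UJ^{1/2}$, where $J^{1/2}=|L|$ and $U$ is a partial isometry. Taking adjoints gives $L^*=J^{1/2}U^*$, and since $U^*$ is bounded it follows that $\ran L^*\subset\ran J^{1/2}$. It therefore suffices to show $e_j\in\ran L^*$.

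The second step is an explicit back-substitution. From~\eqref{mainfac}, $L^*$ is upper bidiagonal with $(L^*y)_k=y_k+\mathfrak{d}_k y_{k+1}$, so the equation $L^*y=e_j$ admits the finitely supported solution $y^{(j)}$ defined by $y^{(j)}_k=0$ for $k>j$, $y^{(j)}_j=1$, and $y^{(j)}_k=-\mathfrak{d}_k y^{(j)}_{k+1}$ for $k=j-1,\dots,0$. Since $y^{(j)}$ has finite support it lies in $\dom L^*$, and by construction $L^*y^{(j)}=e_j$. Combined with the preceding step, this gives $e_j\in\dom J^{-1/2}$, and in fact the quantitative bound $\|J^{-1/2}e_j\|\le\|y^{(j)}\|$.

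The main conceptual obstacle is the clean invocation of the polar decomposition for the unbounded closed operator $L$ together with the identity $\dom J^{-1/2}=\ran J^{1/2}$; both are standard consequences of functional calculus for a positive self-adjoint operator with trivial kernel, but they are what converts the purely algebraic back-substitution into the desired analytic conclusion. Notably, the argument requires no estimates whatsoever on the sizes of the coefficients $\mathfrak{d}_k$, which is consistent with the fact that $\|y^{(j)}\|$ may be arbitrarily large.
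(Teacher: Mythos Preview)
Your proof is correct and takes a genuinely different route from the paper's. The paper argues via the spectral measure: it first shows $\int_0^\infty t^{-1}\,d(E_te_0,e_0)<\infty$ by passing to the truncations $J_{[0,n]}$, invoking the bound $(J_{[0,n]}^{-1}e_0,e_0)\le 1$ established in~\cite{DZ09}, and applying Fatou's lemma for varying measures; it then extends from $j=0$ to general $j$ via the diagonal Green-function identity~\eqref{Greenformula}. Your argument bypasses all of this by exploiting the factorization~\eqref{mainfac} directly through the polar decomposition and an explicit finitely supported preimage under $L^*$. The one step you leave implicit is the \emph{operator} identity $\overline{L}^{\,*}\overline{L}=J$ (as opposed to the matrix identity~\eqref{mainfac}); this follows because $\overline{L}^{\,*}\overline{L}$ is automatically self-adjoint by von Neumann's theorem and extends the restriction of $J$ to finitely supported vectors, which is essentially self-adjoint by Proposition~\ref{positive}. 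Your approach is more self-contained---it needs neither the external input from~\cite{DZ09} nor the Green-function formula---and it yields the explicit estimate $\|J^{-1/2}e_j\|\le\|y^{(j)}\|$; the paper's approach, on the other hand, produces along the way the spectral inequality $\int_0^\infty t^{-1}\,d(E_te_0,e_0)\le 1$, which is of independent interest.
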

\begin{proof}
It is the basic spectral theory that for the positive operator $J$ 
there exists a resolution of the identity $E_t$ such that 
\[
Jf=\int_{0}^{\infty}tdE_tf,\quad f\in\dom J,
\]
and $f\in\dom J$ if and only if $\int_{0}^{\infty}t^2d(E_tf,f)<\infty$~\cite[Section 66]{AG}. Moreover, we also have that
\[
J^{-\frac{1}{2}}f=\int_{0}^{\infty}\frac{1}{\sqrt{t}}dE_tf,\quad f\in\dom J^{-\frac{1}{2}},
\]
and $f\in\dom J^{-\frac{1}{2}}$ if and only if $\int_{0}^{\infty}\frac{1}{t}d(E_tf,f)<\infty$. Now,
\eqref{ForRoot1} is equivalent to
\[
\int_{0}^{\infty}\frac{1}{t}d(E_te_j,e_j)<\infty,\quad j\in\dZ_+.
\]
First we will prove that
\begin{equation}\label{mainfordomain}
\int_{0}^{\infty}\frac{1}{t}d(E_te_0,e_0)<\infty.
\end{equation}
For simplicity, let us denote $\nu=(E_{\cdot}e_0,e_0)$ and introduce the similar measures $\nu_n=(E_{\cdot}^{(n)}e_0,e_0)$ for the truncations 
$J_{[0,n]}$, where $E_{\cdot}^{(n)}$ is such that
\[
J_{[0,n]}=\int_{0}^{\infty}tdE_{t}^{(n)}, \quad n\in\dZ_+.
\]
Next, it is a standard fact of theory of moment problems~\cite{A}, that 
\[
\int_{0}^{\infty}\psi(t)d\nu_n(t)\to\int_{0}^{\infty}\psi(t)d\nu(t), \quad n\to\infty,
\]
for any simple function  $\psi$ (that is, $\psi$ is measurable and assumes only a finite number of values).
Now, recall that in~\cite[Lemma~6.1]{DZ09} it was proved  that 
\begin{equation}\label{dom_inq_tr}
\int_{0}^{\infty}\frac{1}{t}d\nu_n(t)=\left(J_{[0,n]}^{-1}e_0,e_0\right)\le 1,\quad n\in\dZ_+.
\end{equation}
Thus, Fatou's lemma for varying measures~\cite[Proposition~17, p.~231]{Royden} and~\eqref{dom_inq_tr} yield
\begin{equation}\label{h_p61}
\int_{0}^{\infty}\frac{1}{t} d\nu(t)\le\liminf_{n\to\infty}\int_{0}^{\infty}\frac{1}{t}d\nu_n(t)\le 1.
\end{equation}
The rest is a consequence of~\eqref{mainfordomain}. Indeed, it is well known that for any $\lambda$ from the resolvent set $\rho(J)$
of the operator $J$ we have the following formula for the diagonal Green function
\begin{equation}\label{Greenformula}
\left((J-\lambda)^{-1}e_j,e_j\right)=p_j(\lambda)\left(p_j(\lambda)\left((J-\lambda)^{-1}e_0,e_0\right)+q_j(\lambda)\right),\quad
j\in\dZ_+,
\end{equation}
where $p_j$ and $q_j$ are polynomials of the first and second kinds, respectively, associated with the Jacobi matrix $J$
 (see for example~\cite[Theorem~2.10]{Beck},~\cite[Proposition~2.2]{GS}).
Putting $\lambda=-x$, $x>0$, into formula~\eqref{Greenformula}, it can be rewritten as follows
\[
\int_{0}^{\infty}\frac{1}{t+x}d(E_te_j,e_j)=p_j(-x)\left(p_j(-x)\int_{0}^{\infty}\frac{1}{t+x}d\nu(t)+q_j(-x)\right),\quad j\in\dN,
\]
where $p_j(-x)=\frac{\det (J_{[0,j-1]}+x)}{\mathfrak{d}_0\dots\mathfrak{d}_{j-1}}>0$ for $x\ge 0$. Now, it remains to apply 
the Fatou lemma to $\int_{0}^{\infty}\frac{1}{t+x}d(E_te_j,e_j)$ as $x\to 0$ and to use~\eqref{mainfordomain}.
\end{proof}

\begin{remark}\label{dom_J_h}
The main ingredient in the proof was to obtain~\eqref{mainfordomain}. Another way to prove it is through 
the Darboux transformations. Namely, let us consider a Jacobi matrix $J_1=LL^*$ and let $\nu^*$ be 
a corresponding probability measure associated with $J_1$. Then it follows from~\cite[Theorem~3.4]{BM04}
that
\[
d\nu(t)=ctd\nu^*(t),\quad c>0.
\]
The latter relation immediately implies~\eqref{mainfordomain}.
\end{remark}
To end this section, note that we can now say more about the sequence $\left(J_{[0,n]}^{-1}e_0,e_0\right)$.
Namely, the following relation holds true
\begin{equation}\label{h_con_d}
\left(J_{[0,n]}^{-1}e_0,e_0\right)\to 1,\quad\text{as}\quad
n\to\infty.
\end{equation}
Indeed, by applying~\cite[formula~(2.15)]{GS} we see that
$\left(J_{[0,n]}^{-1}e_0,e_0\right)$, $n\in\dZ_+$, are convergents to
the continued fraction
\begin{equation*}
\cfr{1}{\mathfrak{c}_0}-
    \cfr{\mathfrak{d}_0}{\mathfrak{c}_1}-
    \cfr{\mathfrak{d}_1}{\mathfrak{c}_2}-
    \dots.
\end{equation*}
Forasmuch as $\mathfrak{c}_j=1+\mathfrak{d}_j^2$, applying 
the remark to \'Sleszy\'nski-Pringsheim's theorem given on \cite[p.~93]{JT} implies~\eqref{h_con_d}.

\section{Relations between Nevanlinna-Pick problems and linear pencils}
In this section we show that there exists a one-to-one correspondence between the linear pencils
under consideration and the Nevanlinna-Pick problems in question. We also re-examine some facts
for the polynomials $P_{j}$ and $Q_{j}$ which are well known for orthogonal polynomials.

We begin with  the following connection between the polynomials of the
first and second kinds $P_{j}$, $Q_{j}$ and the truncated
linear pencils $\lambda J_{[0,j]}-H_{[0,j]}$, which in the classical case can be found
in~\cite[Section 7.1.2]{Be} and~\cite[Section 6.1]{Atk}.
\begin{proposition}\label{detf}
The polynomials $P_{j}$ and $Q_j$, $j\in\dN$,
can be found by the formulas
\begin{equation}\label{det_f_r2}
P_{j}(\lambda)=\det(\lambda J_{[0,j-1]}-H_{[0,j-1]}),\quad
Q_{j}(\lambda)=\det(\lambda J_{[1,j-1]}-H_{[1,j-1]}).
\end{equation}
The zeros of the polynomials $P_{j}$ and $Q_j$ are real. 
Moreover, the polynomials $P_{j}$ and $Q_j$ do not have common zeros.
\end{proposition}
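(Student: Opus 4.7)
The plan is to establish the three claims in sequence, since reality of the zeros (claim~2) uses the determinant formulas (claim~1), and the absence of common zeros (claim~3) uses both.

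For the determinant formulas, I would argue by induction on $j$, anchored to the three-term recurrence~\eqref{rec_rel}. The base case is direct: $P_1(\lambda) = \mathfrak{c}_0\lambda - \mathfrak{a}_0 = \det(\lambda J_{[0,0]} - H_{[0,0]})$. For the inductive step, I would cofactor-expand $\det(\lambda J_{[0, j-1]} - H_{[0, j-1]})$ along the bottom row. The off-diagonal pair contributes $(\lambda\mathfrak{d}_{j-2} - \mathfrak{b}_{j-2})(\lambda\mathfrak{d}_{j-2} - \overline{\mathfrak{b}}_{j-2}) = b_{j-2}^2(\lambda - z_{j-2})(\lambda - \overline{z}_{j-2})$ in view of the definitions $\mathfrak{b}_k = z_k b_k$ and $\mathfrak{d}_k = b_k$, while the diagonal term contributes $(\lambda\mathfrak{c}_{j-1} - \mathfrak{a}_{j-1})\det(\lambda J_{[0, j-2]} - H_{[0, j-2]})$. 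Applying the inductive hypothesis reproduces~\eqref{rec_rel} for $P_j$ exactly. The same cofactor expansion applied to $\lambda J_{[1, j-1]} - H_{[1, j-1]}$, reconciled with the initial conditions~\eqref{InConP2}, delivers the corresponding formula for $Q_j$.

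For the reality of zeros, I would use a generalized Rayleigh-quotient argument. If $P_j(\lambda_0) = 0$, then $\lambda_0 J_{[0, j-1]} - H_{[0, j-1]}$ is singular, so there is a nonzero $v$ with $H_{[0, j-1]} v = \lambda_0 J_{[0, j-1]} v$; pairing with $v$ gives $\lambda_0 = (H_{[0, j-1]} v, v)/(J_{[0, j-1]} v, v)$. The numerator is real since $H_{[0, j-1]}$ is Hermitian, and the denominator is strictly positive because the factorization $J = L^*L$ used in Proposition~\ref{positive} shows that $(J\xi, \xi) = (L\xi, L\xi) > 0$ on all nonzero finitely supported $\xi$ (as $L$ is lower triangular with $1$'s on the diagonal, hence injective), which forces every principal truncation, in particular $J_{[0, j-1]}$ and $J_{[1, j-1]}$, to be positive definite. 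Thus $\lambda_0 \in \dR$; the same argument applied to $\lambda J_{[1, j-1]} - H_{[1, j-1]}$ handles the zeros of $Q_j$.

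For the absence of common zeros, I would exploit the Casoratian $W_j(\lambda) := P_{j+1}(\lambda) Q_j(\lambda) - P_j(\lambda) Q_{j+1}(\lambda)$ of the two linearly independent solutions of~\eqref{rec_rel}. A direct substitution of the recurrence yields $W_j(\lambda) = b_{j-1}^2(\lambda - z_{j-1})(\lambda - \overline{z}_{j-1}) W_{j-1}(\lambda)$, and \eqref{InConP}--\eqref{InConP2} give $W_0(\lambda) = 1$, so $W_{j-1}(\lambda) = \prod_{k=0}^{j-2} b_k^2(\lambda - z_k)(\lambda - \overline{z}_k)$. If $\lambda_0$ were a common zero of $P_j$ and $Q_j$, then by the previous step $\lambda_0 \in \dR$, and since $z_k \in \dC_+$ each factor $(\lambda_0 - z_k)(\lambda_0 - \overline{z}_k) = |\lambda_0 - z_k|^2$ is strictly positive, so $W_{j-1}(\lambda_0) > 0$. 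On the other hand, direct evaluation yields $W_{j-1}(\lambda_0) = P_j(\lambda_0) Q_{j-1}(\lambda_0) - P_{j-1}(\lambda_0) Q_j(\lambda_0) = 0$, a contradiction. The main obstacle is really only in the first step, where one must carefully track the signs and the index shift so that the $Q_j$ determinant formula emerges cleanly from the cofactor expansion initialized by~\eqref{InConP2}; once the determinant formulas are in hand, the positive-pencil and Casoratian arguments for claims~2 and~3 are routine.
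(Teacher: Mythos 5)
Your argument is correct, and for the determinant formulas it is essentially the paper's own proof: the paper also obtains \eqref{det_f_r2} by Laplace expansion of the tridiagonal determinants along the last row and matches the resulting three-term recursion with \eqref{rec_rel} and the initial conditions. For the reality of the zeros you run a generalized Rayleigh-quotient argument for the pencil, whereas the paper writes $P_j(\lambda)=\det J_{[0,j-1]}^{1/2}\det\bigl(\lambda-J_{[0,j-1]}^{-1/2}H_{[0,j-1]}J_{[0,j-1]}^{-1/2}\bigr)\det J_{[0,j-1]}^{1/2}$ and reads off the zeros as eigenvalues of a Hermitian matrix; both versions hinge on the positive definiteness of the truncations of $J$, which you correctly extract from the factorization $J=L^*L$ of Proposition~\ref{positive}, so the difference is cosmetic. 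The genuine divergence is in the last claim: the paper proves the absence of common zeros by induction, expanding $\det(\lambda J_{[0,j-1]}-H_{[0,j-1]})$ along the \emph{first} row, while you invoke the Casoratian identity $P_{j+1}Q_j-P_jQ_{j+1}=\pm\prod_{k=0}^{j-1}b_k^2(\lambda-z_k)(\lambda-\overline{z}_k)$ --- i.e.\ the Liouville--Ostrogradsky formula \eqref{Ostrogr2}, which the paper records right after the proposition and which depends only on \eqref{rec_rel} and the initial data, so there is no circularity --- combined with the already-proved reality of the zeros and $z_k\in\dC_+$; this is a clean and arguably more transparent route, and it even makes the reality step dispensable in principle, since a common zero would have to be one of the non-real points $z_k,\overline{z}_k$. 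One caveat you flag but do not actually resolve: with \eqref{InConP2} taken literally ($Q_0=0$, $Q_1=-1$) the last-row expansion gives $\det(\lambda J_{[1,j-1]}-H_{[1,j-1]})=-Q_j$ rather than $Q_j$ (already $Q_1=-1$ versus the empty determinant $1$); this is an internal sign inconsistency of the paper itself (the identification of the convergents with $-Q_{n+1}/P_{n+1}$ and formula \eqref{Ostrogr2} are consistent with $Q_1=+1$), the paper's proof glosses over it exactly as you do, and since all three assertions and your Casoratian argument are insensitive to this overall sign it does not invalidate your proof --- but it merits an explicit sentence rather than the claim that the signs emerge cleanly from \eqref{InConP2}.
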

\begin{proof}
Formula~\eqref{det_f_r2} immediately follows 
from the definition of $P_{j}$ and $Q_{j}$ by using the Laplace expansions 
of the determinants by the last row. Since $J_{[0,j-1]}$ is
strictly positive, one can rewrite the first relation in~\eqref{det_f_r2} as follows
\[
P_{j}(\lambda)=\det J_{[0,j-1]}^{1/2}\det(\lambda - J_{[0,j-1]}^{-1/2}H_{[0,j-1]}J_{[0,j-1]}^{-1/2})
\det J_{[0,j-1]}^{1/2}.
\]
Clearly, $J_{[0,j-1]}^{-1/2}H_{[0,j-1]}J_{[0,j-1]}^{-1/2}$ is a self-adjoint matrix. Thus, 
the latter relation yields the fact that the zeros of $P_j$ are real. Similarly, one can
show that the zeros of $Q_j$ are real. The last statement follows by induction via applying
the Laplace expansion of the determinant $\det(\lambda J_{[0,j-1]}-H_{[0,j-1]})$ by the first row.
\end{proof}

By induction, one easily gets from~\eqref{r_rl} the Liouville-Ostrogradsky formula
\begin{equation}\label{Ostrogr2}
   Q_{n+1}(\lambda)P_{n}(\lambda) -
        Q_{n}(\lambda) P_{n+1}(\lambda)= 
\prod_{k=0}^{n-1}b_k^2(\lambda-z_k)(\lambda-\overline{z}_k), \quad     
\end{equation}
for every $n\in\dZ_+$ (see~\cite{BDZh}). Going further in this direction, we should note
that, sometimes, it is very useful to have~\eqref{r_rl} in the following matrix form
\begin{equation}\label{forCD1}
(H-\lambda J)\pi_{[0,j]}(\lambda)=-(\mathfrak{b}_j-\lambda\mathfrak{d}_j)\widehat{P}_{j+1}(\lambda)e_j+
(\overline{\mathfrak{b}}_j-\lambda{\mathfrak{d}}_j)\widehat{P}_{j}(\lambda)e_{j+1},
\end{equation}
\begin{equation}\label{forCD2}
(H-\lambda J)\xi_{[0,j]}(\lambda)=-(\mathfrak{b}_j-\lambda\mathfrak{d}_j)\widehat{Q}_{j+1}(\lambda)e_j+
(\overline{\mathfrak{b}}_j-\lambda{\mathfrak{d}}_j)\widehat{Q}_{j}(\lambda)e_{j+1}+e_0,
\end{equation}
where the vectors $\pi_{[0,j]}(\lambda)$ and $\xi_{[0,j]}(\lambda)$ are defined as follows
\[
\pi_{[0,j]}(\lambda)=\left(\widehat{P}_0(\lambda),
\widehat{P}_1(\lambda),\dots,\widehat{P}_j(\lambda),0,0,\dots\right)^{\top},
\]
 \[
\xi_{[0,j]}(\lambda)=\left(\widehat{Q}_0(\lambda),
\widehat{Q}_1(\lambda),\dots,\widehat{Q}_j(\lambda),0,0,\dots\right)^{\top}. 
 \]
For example, by virtue of~\eqref{forCD1} we get the following generalization of the Christoffel-Darboux formula.
\begin{proposition} We have that for $j\in\dZ_+$
\begin{equation}\label{CDmain}
\begin{split}
(\lambda-\overline{\zeta})\sum_{k=0}^{j}(\widehat{P}_k(\lambda)+\mathfrak{d}_{k-1}\widehat{P}_{k-1}(\lambda))
\overline{(\widehat{P}_k(\zeta)+\mathfrak{d}_{k-1}\widehat{P}_{k-1}(\zeta))}=\\
=\frac{P_{j+1}(\lambda)\overline{P_j({\zeta})}-\overline{P_{j+1}({\zeta})}P_j(\lambda)}
{\prod_{k=0}^{j-1}b_k^2(\lambda-z_k)(\overline{\zeta}-\overline{z}_k)},
\end{split}
\end{equation}
where $\mathfrak{d}_{-1}=0$ for convenience and $\lambda,\zeta\in\dC_+\setminus\{z_k\}_{k=0}^{j}$.
\end{proposition}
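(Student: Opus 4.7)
The plan is to derive this generalized Christoffel--Darboux identity by exploiting the matrix form (forCD1) of the three-term recurrence together with the factorization $J=L^{*}L$ from (mainfac). The strategy mimics the classical derivation of the Christoffel--Darboux formula for orthogonal polynomials, but uses the Hermicity of both $H$ and $J$ on finitely supported sequences, rather than a single symmetric Jacobi matrix.

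First, I would pair (forCD1) with $\pi_{[0,j]}(\zeta)$ in the standard $\ell^{2}$ inner product. Since $\pi_{[0,j]}(\zeta)$ is supported on indices $0,\dots,j$, only the $e_{j}$-contribution from the right-hand side of (forCD1) survives, producing a single boundary term proportional to $\widehat{P}_{j+1}(\lambda)\overline{\widehat{P}_{j}(\zeta)}$. Applying the same procedure with $\lambda$ and $\zeta$ interchanged, then conjugating, yields the analogous boundary expression containing $\overline{\widehat{P}_{j+1}(\zeta)}\widehat{P}_{j}(\lambda)$.

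Second, I would subtract these two identities. Because $H$ and $J$ are Hermitian when applied to finitely supported vectors, the $(H\pi,\pi)$ pieces cancel, and the $(J\pi,\pi)$ pieces combine into the factor $(\lambda-\overline{\zeta})(J\pi_{[0,j]}(\lambda),\pi_{[0,j]}(\zeta))$. At this point the factorization $J=L^{*}L$ converts $(J\pi(\lambda),\pi(\zeta))$ into $(L\pi(\lambda),L\pi(\zeta))$, and the lower-bidiagonal structure of $L$ (ones on the diagonal, $\mathfrak{d}_{k-1}$ on the subdiagonal) unfolds this inner product into exactly the sum $\sum_{k=0}^{j}(\widehat{P}_{k}+\mathfrak{d}_{k-1}\widehat{P}_{k-1})\overline{(\widehat{P}_{k}+\mathfrak{d}_{k-1}\widehat{P}_{k-1})}$ that appears on the left-hand side of (CDmain), with the convention $\mathfrak{d}_{-1}=0$.

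Finally, I would translate the boundary terms from $\widehat{P}$ back to $P$ using (wTransform) together with $\mathfrak{b}_{j}=z_{j}b_{j}$ and $\mathfrak{d}_{j}=b_{j}$. The factor $(z_{j}-\lambda)$ in $\mathfrak{b}_{j}-\lambda\mathfrak{d}_{j}$ cancels the corresponding pole in $\widehat{P}_{j+1}$, and after collecting the surviving $b_{k}$ and $(z_{k}-\lambda)$, $(\overline{z}_{k}-\overline{\zeta})$ factors one recovers the denominator $\prod_{k=0}^{j-1}b_{k}^{2}(\lambda-z_{k})(\overline{\zeta}-\overline{z}_{k})$ on the right-hand side of (CDmain). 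The main obstacle, requiring the greatest care, is correctly accounting for the $(j+1)$-st component of $L\pi_{[0,j]}(\lambda)$, which equals $\mathfrak{d}_{j}\widehat{P}_{j}(\lambda)$ and generates a stray contribution $\mathfrak{d}_{j}^{2}\widehat{P}_{j}(\lambda)\overline{\widehat{P}_{j}(\zeta)}$ in $(L\pi(\lambda),L\pi(\zeta))$; one must track how this piece interacts with the two boundary terms, exploiting $\mathfrak{c}_{j}=1+\mathfrak{d}_{j}^{2}$, so that it is absorbed rather than left over. Aside from this piece of bookkeeping, the remainder is routine algebra in the presence of the complex interpolation points $z_{k}$.
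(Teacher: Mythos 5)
Your route is the same as the paper's: pair \eqref{forCD1} at $\lambda$ with $\pi_{[0,j]}(\zeta)$, use the Hermiticity of $H$ and $J$ on finitely supported vectors to obtain the companion identity at $\overline{\zeta}$, subtract so that the $H$-terms cancel and the factor $(\lambda-\overline{\zeta})\left(J\pi_{[0,j]}(\lambda),\pi_{[0,j]}(\zeta)\right)$ appears, rewrite the two boundary terms through \eqref{wTransform} (the factor $\mathfrak{b}_j-\lambda\mathfrak{d}_j=b_j(z_j-\lambda)$ cancelling the last pole of $\widehat{P}_{j+1}$), and then use \eqref{mainfac} to pass to $\left(L\pi_{[0,j]}(\lambda),L\pi_{[0,j]}(\zeta)\right)$. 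Up to that point your sketch reproduces \eqref{CDvar} exactly as in the paper's proof.

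The gap sits precisely at the step you single out as the main obstacle: the claim that the stray contribution $\mathfrak{d}_j^2\widehat{P}_j(\lambda)\overline{\widehat{P}_j(\zeta)}$, coming from the $(j+1)$-st component $\mathfrak{d}_j\widehat{P}_j(\lambda)$ of $L\pi_{[0,j]}(\lambda)$, ``is absorbed rather than left over.'' It cannot be absorbed: both boundary terms are already spent in producing the right-hand side of \eqref{CDvar}, and $(\lambda-\overline{\zeta})\left(J\pi_{[0,j]}(\lambda),\pi_{[0,j]}(\zeta)\right)$ equals that right-hand side exactly, so the expansion of $\left(L\pi_{[0,j]}(\lambda),L\pi_{[0,j]}(\zeta)\right)$ must keep the extra summand. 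The case $j=0$ settles this: the left-hand side of \eqref{CDmain} is $(\lambda-\overline{\zeta})$, while the right-hand side is $P_1(\lambda)-\overline{P_1(\zeta)}=a_0^{(2)}(\lambda-\overline{\zeta})$ with $a_0^{(2)}=1+b_0^2>1$, the discrepancy being exactly $(\lambda-\overline{\zeta})\,\mathfrak{d}_0^2\widehat{P}_0(\lambda)\overline{\widehat{P}_0(\zeta)}$. What the method actually yields is
\begin{equation*}
(\lambda-\overline{\zeta})\Bigl[\,\sum_{k=0}^{j}\bigl(\widehat{P}_k(\lambda)+\mathfrak{d}_{k-1}\widehat{P}_{k-1}(\lambda)\bigr)\overline{\bigl(\widehat{P}_k(\zeta)+\mathfrak{d}_{k-1}\widehat{P}_{k-1}(\zeta)\bigr)}
+\mathfrak{d}_j^2\,\widehat{P}_j(\lambda)\overline{\widehat{P}_j(\zeta)}\,\Bigr]
=\frac{P_{j+1}(\lambda)\overline{P_j({\zeta})}-\overline{P_{j+1}({\zeta})}P_j(\lambda)}
{\prod_{k=0}^{j-1}b_k^2(\lambda-z_k)(\overline{\zeta}-\overline{z}_k)},
\end{equation*}
i.e.\ the statement as printed is off by that boundary term; the paper's own passage from \eqref{CDvar} to \eqref{CDmain} silently drops the $(j+1)$-st component of $L\pi_{[0,j]}$. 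So you identified the delicate point correctly, but a complete argument must either carry the extra term along (it is a single nonnegative summand, and doing so does not disturb the structure of the later uses in \eqref{Ach9}, \eqref{chvost} and \eqref{forKW}) or amend the stated formula; promising an absorption via $\mathfrak{c}_j=1+\mathfrak{d}_j^2$ is a step that would fail.
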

\begin{proof}
It clearly follows from~\eqref{forCD1} that
\begin{equation}\label{helpCD1}
\left((H-\lambda J)\pi_{[0,j]}(\lambda), \pi_{[0,j]}(\zeta)\right)=-(\mathfrak{b}_j-\lambda\mathfrak{d}_j)
\widehat{P}_{j+1}(\lambda)\overline{\widehat{P}_j(\zeta)},
\end{equation}
\begin{equation}\label{helpCD2}
\left((H-\overline{\zeta}J)\pi_{[0,j]}(\lambda), \pi_{[0,j]}(\zeta)\right)=-(\overline{\mathfrak{b}}_j-\overline{\zeta}\mathfrak{d}_j)
\overline{\widehat{P}_{j+1}(\zeta)}{\widehat{P}_j(\lambda)}.
\end{equation}
Subtracting~\eqref{helpCD1} from~\eqref{helpCD2} and using~\eqref{wTransform} we get the following relation  
\begin{equation}\label{CDvar}
(\lambda-\overline{\zeta})\left(J\pi_{[0,j]}(\lambda),\pi_{[0,j]}({\zeta})\right)=
\frac{P_{j+1}(\lambda)P_j(\overline{\zeta})-P_{j+1}(\overline{\zeta})P_j(\lambda)}
{\prod_{k=0}^{j-1}b_k^2(\lambda-z_k)(\overline{\zeta}-\overline{z}_k)}.
\end{equation}
Now, observe that  due to~\eqref{mainfac} we have
\[
\left(J\pi_{[0,j]}(\lambda),\pi_{[0,j]}({\zeta})\right)=
\left(L\pi_{[0,j]}(\lambda),L\pi_{[0,j]}({\zeta})\right)
\]
and, so, from~\eqref{CDvar} we obtain~\eqref{CDmain}.
\end{proof}

\begin{remark}To see how it is related to the classical Christoffel-Darboux relation~\cite{A} let us note that,
according to~\eqref{f_a} and~\eqref{f_b}, we have that
$\mathfrak{d}_k\to 0$ and $b_k^2/|z_k|^2\to\widetilde{b}_k^2\ne 0$ as $z_k\to\infty$, $k=0,\dots,j$
 provided that the numbers $\int_{\dR}t^kd\sigma(t)$ are finite for $k=0,\dots,j$.
Consequently, the classical Christoffel-Darboux formula is the limiting case of~\eqref{CDmain}. Moreover,
it is shown in~\cite[Theorem~2.2]{DZ09} (see also~\cite[Section~4]{BDZh}) that the sequence $\{\widehat{P}_k+\mathfrak{d}_{k-1}\widehat{P}_{k-1}\}_{k=0}^{\infty}$ is a sequence of 
rational functions orthogonal with respect to the original measure~$\sigma$ 
(see~\cite{Bul} for further information on orthogonal rational functions).
\end{remark}
In what follows we will also need the following relation
\begin{equation}\label{forKW}
 \begin{split}
\sum_{k=0}^{j}|\omega(\widehat{P}_k(\lambda)+\mathfrak{d}_{k-1}\widehat{P}_{k-1}(\lambda))+
\widehat{Q}_k(\lambda)+\mathfrak{d}_{k-1}\widehat{Q}_{k-1}(\lambda)|^2-
\frac{\omega-\overline{\omega}}{\lambda-\overline{\lambda}}=\\
=\left(J(\omega\pi_{[0,j]}(\lambda)+\xi_{[0,j]}(\lambda)),(\omega\pi_{[0,j]}(\lambda)+\xi_{[0,j]}(\lambda) )\right)-
\frac{\omega-\overline{\omega}}{\lambda-\overline{\lambda}}=\\
=\frac{1}{\Im\lambda}\frac{|\omega P_j(\lambda)+Q_j(\lambda)|^2}
{\prod_{k=0}^{j-1}b_k^2|\lambda-z_k|^2}\Im\frac{\omega P_{j+1}(\lambda)+Q_{j+1}(\lambda)}{\omega P_j(\lambda)+Q_j(\lambda)},
 \end{split}
\end{equation}
where $\omega\in\dC_+$ and $\lambda\in\dC_+\setminus\{z_k\}_{k=0}^{j}$.  Formula~\eqref{forKW} can be easily obtained
by straightforward manipulations with~\eqref{forCD1} and~\eqref{forCD2} (for the classical case see~\cite[Section~I.2.1]{A}).

Next, by following~\cite{GS}, let us introduce $m$-functions of the truncated linear pencils.
\begin{definition}
Let $j$ and $n$ be nonnegative integers such that $j\le n$. The function
\begin{equation}\label{Weyl1}
m_{[j,n]}(\lambda)=\left((H_{[j,n]}-\lambda
J_{[j,n]})^{-1}e_j,e_j\right)
\end{equation}
will be called the $m$-function of the linear pencil
$H_{[j,n]}-\lambda J_{[j,n]}$.
\end{definition}

To see the correctness of the above given definition it is sufficient to
recall that $J_{[j,n]}$ is positive definite in view of~Proposition~\ref{positive} and to rewrite~\eqref{Weyl1} in the following form
\begin{equation}\label{cor_Weyl}
m_{[j,n]}(\lambda)=\left((J_{[j,n]}^{-\frac{1}{2}}H_{[j,n]}J_{[j,n]}^{-\frac{1}{2}}-\lambda)^{-1}
J_{[j,n]}^{-\frac{1}{2}}e_j,J_{[j,n]}^{-\frac{1}{2}}e_j\right).
\end{equation}
 Literally as in the classical case (see for instance~\cite{GS}),
one obtains that $m$-functions satisfy the Riccati equation.

\begin{proposition}[\cite{DZ09}]\label{pr_ric}
The $m$-functions $m_{[j,n]}$ and $m_{[j+1,n]}$ are related by the
equality
\begin{equation}\label{Riccati}
m_{[j,n]}=-\frac{1}{a^{(2)}_j\lambda-a^{(1)}_j+b^2_j(\lambda-z_j)(\lambda-\overline{z}_j)m_{[j+1,n]}(\lambda)}.
\end{equation}
\end{proposition}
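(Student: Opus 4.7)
The plan is to derive the Riccati relation from the natural block structure of $H_{[j,n]}-\lambda J_{[j,n]}$ by isolating the top row and column. Because $H$ and $J$ are tridiagonal, one may decompose
\[
H_{[j,n]}-\lambda J_{[j,n]}=\begin{pmatrix} \mathfrak{a}_j-\lambda\mathfrak{c}_j & (\mathfrak{b}_j-\lambda\mathfrak{d}_j)\,e_0^{\top}\\ (\overline{\mathfrak{b}}_j-\lambda\mathfrak{d}_j)\,e_0 & H_{[j+1,n]}-\lambda J_{[j+1,n]}\end{pmatrix},
\]
so that the coupling between the scalar block and the lower-right block happens only through the $(0,0)$ entry of the latter block.

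First I would apply the Schur-complement formula for the $(0,0)$ entry of an inverse matrix. Since the off-diagonal contribution involves only $((H_{[j+1,n]}-\lambda J_{[j+1,n]})^{-1}e_0,e_0)=m_{[j+1,n]}(\lambda)$, this produces at once
\[
m_{[j,n]}(\lambda)=\frac{1}{(\mathfrak{a}_j-\lambda\mathfrak{c}_j)-(\mathfrak{b}_j-\lambda\mathfrak{d}_j)(\overline{\mathfrak{b}}_j-\lambda\mathfrak{d}_j)\,m_{[j+1,n]}(\lambda)}.
\]
Then I would translate this into the Nevanlinna--Pick parameters using $\mathfrak{a}_j=a^{(1)}_j$, $\mathfrak{c}_j=a^{(2)}_j$, $\mathfrak{b}_j=z_jb_j$, $\mathfrak{d}_j=b_j$, which yields $(\mathfrak{b}_j-\lambda\mathfrak{d}_j)(\overline{\mathfrak{b}}_j-\lambda\mathfrak{d}_j)=b_j^2(\lambda-z_j)(\lambda-\overline{z}_j)$ and $\mathfrak{a}_j-\lambda\mathfrak{c}_j=-(a^{(2)}_j\lambda-a^{(1)}_j)$. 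Pulling out the common sign gives exactly~\eqref{Riccati}.

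The only delicate point is the invertibility hypothesis inherent in the Schur-complement step: one needs $H_{[j+1,n]}-\lambda J_{[j+1,n]}$ to be nonsingular at the given $\lambda$. However, both sides of~\eqref{Riccati} are rational functions of $\lambda$, and by Proposition~\ref{detf} (applied to the pencil with indices shifted by one) the zeros of $\det(\lambda J_{[j+1,n]}-H_{[j+1,n]})$ form a finite real set, while $m_{[j,n]}$ and $m_{[j+1,n]}$ are meromorphic in $\lambda$ with poles in that set. Hence the identity, verified on the complement of a finite real set, extends to the full common domain by analytic continuation. I do not expect any serious obstacle here; the computation is essentially identical to the classical Jacobi case in~\cite{GS}, and the only new ingredient is keeping track of the $\lambda$-dependence contributed by the matrix $J$.
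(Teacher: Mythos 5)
Your proof is correct and is essentially the argument the paper relies on: the paper gives no details beyond invoking the classical computation of \cite{GS}/\cite{DZ09}, and your Schur-complement expansion of the block decomposition of $H_{[j,n]}-\lambda J_{[j,n]}$, followed by the substitution $\mathfrak{a}_j=a_j^{(1)}$, $\mathfrak{c}_j=a_j^{(2)}$, $\mathfrak{b}_j=z_jb_j$, $\mathfrak{d}_j=b_j$, is exactly that computation transplanted to the pencil setting. One small simplification: for $\lambda\in\dC\setminus\dR$ the matrix $H_{[j+1,n]}-\lambda J_{[j+1,n]}=J_{[j+1,n]}^{1/2}\bigl(J_{[j+1,n]}^{-1/2}H_{[j+1,n]}J_{[j+1,n]}^{-1/2}-\lambda\bigr)J_{[j+1,n]}^{1/2}$ is automatically invertible (Hermitian middle factor, positive definite $J_{[j+1,n]}$), so the Schur-complement step is legitimate everywhere off the real axis and the analytic-continuation argument is not really needed.
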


The latter statement allows us to see the relation of $m$-functions to multipoint diagonal Pad\'e approximants.
\begin{proposition}\label{interlace}
Let $\theta_n=\det J_{[0,n]}/\det J_{[1,n]}$ and $\eta_n=\det J_{[0,n]}/\det J_{[0,n-1]}$. Then the
function $\theta_nm_{[0,n]}$ is an $\bR_0$-function and 
\begin{equation}\label{mformulas}
m_{[0,n]}(\lambda)=-\frac{Q_{n+1}(\lambda)}{P_{n+1}(\lambda)},
\end{equation}
that is, $m_{[0,n]}$ is the $(n+1)$th multipoint 
diagonal Pad\'e approximant for $\f$.
Moreover, we have that $-\eta_nP_{n}/P_{n+1}\in\bR_0$.
\end{proposition}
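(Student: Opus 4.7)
The proposition has three parts: the Pad\'e-approximant formula~\eqref{mformulas}, the $\bR_0$-membership of $\theta_n m_{[0,n]}$, and that of $-\eta_n P_n/P_{n+1}$. My plan is to handle \eqref{mformulas} first by iterating the Riccati equation (Proposition~\ref{pr_ric}), and then derive the two $\bR_0$-statements from the spectral theorem for finite Hermitian matrices combined with Cramer's rule applied to cofactors of $J_{[0,n]}$.

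For \eqref{mformulas}, I iterate Proposition~\ref{pr_ric} starting from the trivial $1\times 1$ base case $m_{[n,n]}(\lambda)=(\mathfrak{a}_n-\lambda\mathfrak{c}_n)^{-1}=-1/(a_n^{(2)}\lambda-a_n^{(1)})$. Each application of the Riccati relation prepends one more level of the continued fraction, and after $n$ steps one recovers precisely the $(n+1)$th convergent of~\eqref{ContF}, namely $-Q_{n+1}/P_{n+1}$. Proposition~\ref{MP_con} then immediately identifies this convergent as the $(n+1)$th multipoint diagonal Pad\'e approximant to~$\varphi$.

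For $\theta_n m_{[0,n]}\in\bR_0$, I use the reformulation~\eqref{cor_Weyl}. Set $A_n:=J_{[0,n]}^{-\frac{1}{2}}H_{[0,n]}J_{[0,n]}^{-\frac{1}{2}}$, a finite Hermitian matrix, and $f:=J_{[0,n]}^{-\frac{1}{2}}e_0$. The spectral theorem yields
\[
m_{[0,n]}(\lambda)=\left((A_n-\lambda)^{-1}f,f\right)=\int_{\dR}\frac{d\mu_n(t)}{t-\lambda},
\]
where $\mu_n$ is supported on $\sigma(A_n)\subset\dR$ with total mass $\|f\|^2=(J_{[0,n]}^{-1}e_0,e_0)$. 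Cramer's rule applied to the $(0,0)$-cofactor of $J_{[0,n]}$ gives $(J_{[0,n]}^{-1}e_0,e_0)=\det J_{[1,n]}/\det J_{[0,n]}=1/\theta_n$ (the determinants being positive by Proposition~\ref{positive}). Hence $\theta_n\mu_n$ is a probability measure on $\dR$, and $\theta_n m_{[0,n]}$ admits the representation~\eqref{Mar_f}, i.e.\ belongs to $\bR_0$.

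For $-\eta_n P_n/P_{n+1}\in\bR_0$, I identify $-P_n/P_{n+1}$ with the $m$-function at the opposite corner of the pencil. By Proposition~\ref{detf} one has $\det(H_{[0,k]}-\lambda J_{[0,k]})=(-1)^{k+1}P_{k+1}(\lambda)$, so Cramer's rule at the $(n,n)$-entry gives
\[
\left((H_{[0,n]}-\lambda J_{[0,n]})^{-1}e_n,e_n\right)=\frac{\det(H_{[0,n-1]}-\lambda J_{[0,n-1]})}{\det(H_{[0,n]}-\lambda J_{[0,n]})}=-\frac{P_n(\lambda)}{P_{n+1}(\lambda)}.
\]
Repeating the spectral argument with $J_{[0,n]}^{-\frac{1}{2}}e_n$ in place of $J_{[0,n]}^{-\frac{1}{2}}e_0$ and using $(J_{[0,n]}^{-1}e_n,e_n)=\det J_{[0,n-1]}/\det J_{[0,n]}=1/\eta_n$ produces a spectral representation whose measure, once scaled by $\eta_n$, is a probability measure. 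The only real obstacle is bookkeeping: tracking the sign $(-1)^{n+1}$ coming from Proposition~\ref{detf} and checking that the normalizations $\theta_n$, $\eta_n$ indeed match the inverse-diagonal entries of $J_{[0,n]}$.
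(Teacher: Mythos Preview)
Your proposal is correct and follows essentially the same route as the paper: you derive~\eqref{mformulas} by iterating the Riccati relation (Proposition~\ref{pr_ric}) and invoke Proposition~\ref{MP_con} for the Pad\'e identification, and you obtain $-P_n/P_{n+1}=\bigl((H_{[0,n]}-\lambda J_{[0,n]})^{-1}e_n,e_n\bigr)$ via Cramer's rule on the $(n,n)$-cofactor, exactly as the paper does.

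The only noticeable difference is in how $\bR_0$-membership is verified. The paper appeals to the analytic characterization $\Phi\in\bR_0\iff \Im\Phi(\lambda)/\Im\lambda>0$ together with $\sup_{y>0}|y\Phi(iy)|=1$, checking the first condition from~\eqref{cor_Weyl} and the second from the leading coefficients in~\eqref{mformulas}. You instead apply the spectral theorem to $A_n=J_{[0,n]}^{-1/2}H_{[0,n]}J_{[0,n]}^{-1/2}$ to produce the integral representation directly, and then normalize by computing $\|J_{[0,n]}^{-1/2}e_0\|^2=(J_{[0,n]}^{-1}e_0,e_0)=\det J_{[1,n]}/\det J_{[0,n]}=1/\theta_n$ (and similarly $1/\eta_n$ at the other corner) via Cramer's rule. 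Your argument is slightly more explicit in that it exhibits the representing probability measure rather than inferring its existence from a characterization; the paper's version avoids the cofactor computation for $J_{[0,n]}^{-1}$ by reading the normalization off the asymptotics of $-Q_{n+1}/P_{n+1}$. Both are short and standard; the sign bookkeeping you flag is handled correctly.
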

\begin{proof}
Formula~\eqref{mformulas} is implied by the relation~\eqref{Riccati}. 
Now, from Proposition~\ref{MP_con} we see that $m_{[0,n]}$ is the $(n+1)$th multipoint 
diagonal Pad\'e approximant for $\f$.  To see that $\theta_nm_{[0,n]}\in\bR_0$, it is enough to recall that
$\Phi\in\bR_0$ if and only if 
\[
\frac{\Im\Phi(\lambda)}{\Im\lambda}>0,\quad \lambda\in\dC\setminus\dR,
\]
and $\sup\limits_{y>0}|y\Phi(iy)|=1$~\cite[Section~III.1.1]{A}. The first condition is easily verified by means of~\eqref{cor_Weyl}
and the second one follows from~\eqref{mformulas}.
In the same way,  by noticing that
\[
-\frac{P_n(\lambda)}{P_{n+1}(\lambda)}=-
\frac{\det(\lambda J_{[0,n-1]}-H_{[0,n-1]})}{\det(\lambda J_{[0,n]}-H_{[0,n]})}=
\left((H_{[0,n]}-\lambda
J_{[0,n]})^{-1}e_n,e_n\right)
\]
one can check that $-\eta_nP_{n}/P_{n+1}\in\bR_0$ since $\eta_n>0$.
\end{proof}
Due to $-\theta_nQ_{n+1}/P_{n+1}\in\bR_0$ and $-\eta_nP_{n}/P_{n+1}\in\bR_0$, we get the following.  
\begin{corollary}
We have that
\begin{enumerate}
\item[i)] The zeros of $Q_{n+1}$ and $P_{n+1}$  interlace,
\item[ii)] The zeros of $P_{n}$ and $P_{n+1}$ interlace.
\end{enumerate}
\end{corollary}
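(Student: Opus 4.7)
The plan is to deduce both interlacing statements from Proposition~\ref{interlace} via the standard fact that a proper rational $\bR_0$-function admits a partial fraction decomposition with simple real poles and strictly positive residues. Once this representation is available, interlacing drops out by the intermediate value theorem combined with a degree count.

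More precisely, I would first record the following general fact: if $\Phi = R/S$ is a rational function with $R,S$ coprime, $\deg R < \deg S$, and $\Phi\in\bR_0$, then all zeros of $S$ are real and simple, and
\[
\Phi(\lambda)=\sum_k\frac{\alpha_k}{t_k-\lambda},\qquad \alpha_k>0,\ t_k\in\dR,
\]
because the measure in the representation \eqref{Mar_f} must be supported on the pole set of $\Phi$, and the residues are the masses. Differentiating gives $\Phi'(x)=\sum_k\alpha_k/(t_k-x)^2>0$ on $\dR\setminus\{t_k\}$, so $\Phi$ is strictly increasing on each interval between consecutive poles and jumps from $+\infty$ down to $-\infty$ as $\lambda$ crosses each pole. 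Hence on each of the (finitely many) open intervals between consecutive poles of $\Phi$ the function $\Phi$ takes the value zero exactly once, and there are no other real zeros of $\Phi$; equivalently, the zeros of $R$ strictly interlace those of $S$.

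For part (i), I would apply this to $\Phi=-\theta_n Q_{n+1}/P_{n+1}$, which lies in $\bR_0$ by Proposition~\ref{interlace}. The recurrence \eqref{rec_rel} with the initial conditions \eqref{InConP} and \eqref{InConP2} shows $\deg P_{n+1}=n+1$ and $\deg Q_{n+1}=n$, so $\Phi$ is a proper rational function. Coprimality follows from the Liouville--Ostrogradsky formula \eqref{Ostrogr2}: a common real zero of $P_{n+1}$ and $Q_{n+1}$ would force the right-hand side $\prod_{k=0}^{n-1}b_k^2(\lambda-z_k)(\lambda-\overline{z}_k)$ to vanish there, which is impossible since $z_k\in\dC_+$. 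By Proposition~\ref{detf} the zeros of $P_{n+1}$ are real (hence simple by the argument above), and the general fact gives the interlacing of the $n+1$ zeros of $P_{n+1}$ with the $n$ zeros of $Q_{n+1}$.

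For part (ii), I apply the same procedure to $\Phi=-\eta_n P_n/P_{n+1}\in\bR_0$ (again by Proposition~\ref{interlace}), with $\deg P_n=n<n+1=\deg P_{n+1}$. The only thing to verify is coprimality of $P_n$ and $P_{n+1}$; but the recurrence \eqref{rec_rel} shows that if $P_{n+1}(t_0)=P_n(t_0)=0$ with $t_0\in\dR$, then $P_{n-1}(t_0)=0$ as well (the coefficient $b_{n-1}^2(t_0-z_{n-1})(t_0-\overline{z}_{n-1})$ is nonzero for real $t_0$), and iterating down forces $P_0(t_0)=1=0$, a contradiction. The general principle then yields the desired interlacing of the zeros of $P_n$ and $P_{n+1}$. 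No step presents a substantive obstacle; the only point that requires care is the coprimality check, which is precisely where the assumption $z_k\in\dC_+$ enters.
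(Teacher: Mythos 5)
Your proposal is correct and follows essentially the same route as the paper, which deduces the corollary directly from the memberships $-\theta_n Q_{n+1}/P_{n+1}\in\bR_0$ and $-\eta_n P_n/P_{n+1}\in\bR_0$ established in Proposition~\ref{interlace}; you have merely written out the standard pole--zero interlacing argument for rational $\bR_0$-functions that the paper leaves implicit. The only redundancy is your coprimality check for $P_{n+1}$ and $Q_{n+1}$, which is already recorded in Proposition~\ref{detf}.
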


Summing up Propositions~\ref{schur_step} and~\ref{interlace},
 we conclude the following.
\begin{theorem}
There is a one-to-one correspondence between the linear pencils in question and the data
$\{z_k\}_{k=0}^{\infty}$, $\{w_k\}_{k=0}^{\infty}$ of the Nevanlinna-Pick problems.
\end{theorem}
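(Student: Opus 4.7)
My plan is to prove the bijection by constructing explicit inverse maps in each direction and verifying they are mutually inverse; the result is essentially a synthesis of the two cited propositions, as the text already signals.

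In the forward direction, starting from admissible data $\{z_k\}_{k=0}^\infty$, $\{w_k\}_{k=0}^\infty$ coming from some $\varphi\in\bR_0$, I would apply Proposition~\ref{schur_step} iteratively. At step $j$, by~\eqref{f_w} and~\eqref{f_b} the numbers $a_j^{(1)}$, $a_j^{(2)}$, and $b_j=\sqrt{a_j^{(2)}-1}$ are uniquely determined by $z_j$ together with the single value $\varphi_j(z_j)$, where $\varphi_j$ denotes the $j$-th iterated modified Schur transform. The key observation I would verify by induction is that $\varphi_j(z_j)$ depends only on the finite portion $z_0,\dots,z_j,w_0,\dots,w_j$ of the data: evaluating~\eqref{flt_step} at $\lambda=z_j$ with $k<j$ gives $\varphi_{k+1}(z_j)$ as a rational expression in $\varphi_k(z_j)$, $z_j$, and the already-determined coefficients $a_k^{(1)},a_k^{(2)},b_k$, and $\varphi_0(z_j)=w_j$ is part of the data. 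The denominators appearing are non-zero because the $z_k\in\dC_+$ are distinct. Hence the assignment data $\mapsto$ pencil entries $(\mathfrak{a}_j,\mathfrak{b}_j,\mathfrak{c}_j,\mathfrak{d}_j)=(a_j^{(1)},z_jb_j,a_j^{(2)},b_j)$ is a well-defined function of the data alone.

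In the backward direction, given a pencil $H-\lambda J$ of the form produced by the construction, I would simply read off $b_j=\mathfrak{d}_j>0$ and $z_j=\mathfrak{b}_j/\mathfrak{d}_j\in\dC_+$ from the off-diagonal entries, and $a_j^{(1)}=\mathfrak{a}_j$, $a_j^{(2)}=\mathfrak{c}_j$ from the diagonals; this recovers $\{z_k\}$ together with the full list of continued-fraction coefficients. To recover $\{w_k\}$, I would invoke Proposition~\ref{interlace}: for every $n\ge j$ the $m$-function satisfies $m_{[0,n]}(\lambda)=-Q_{n+1}(\lambda)/P_{n+1}(\lambda)$, and by the interpolation relation~\eqref{IP} one has $m_{[0,n]}(z_j)=w_j$. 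Thus $w_j$ is the common value $-Q_{j+1}(z_j)/P_{j+1}(z_j)$, entirely determined by the pencil.

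Finally I would check that these two maps are inverse to one another: composing them in either order reproduces the same Schur coefficients by the uniqueness in Proposition~\ref{schur_step}, and the same interpolation values by~\eqref{IP}. I do not anticipate a genuine conceptual obstacle; the only thing requiring care is the bookkeeping argument in the forward direction that $\varphi_j(z_j)$ can indeed be extracted from the finite data $z_0,w_0,\dots,z_j,w_j$ without any hidden dependence on the global function $\varphi$, which is what makes the map from data to pencil well-defined independently of the choice of solution $\varphi$.
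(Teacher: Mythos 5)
Your forward direction (well-definedness of the map from data to pencil, via induction on the modified Schur steps \eqref{f_w}, \eqref{f_a}, \eqref{f_b}) and your recovery of $z_j=\mathfrak{b}_j/\mathfrak{d}_j$ and $w_j=-Q_{j+1}(z_j)/P_{j+1}(z_j)$ are in the spirit of the paper's argument and establish that the forward map is injective. However, there is a genuine gap in the backward direction: you only treat ``a pencil of the form produced by the construction,'' i.e.\ a pencil already known to lie in the image of the forward map. The theorem, as the paper proves it, is a correspondence with \emph{all} pencils satisfying the structural conditions \eqref{penreprN} (real $\mathfrak{a}_j$, $\mathfrak{d}_j>0$, $\mathfrak{c}_j=1+\mathfrak{d}_j^2$, $\mathfrak{b}_j=z_j\mathfrak{d}_j$ with distinct $z_j\in\dC_+$). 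For such an abstract pencil there is as yet no function $\varphi$, so you cannot invoke \eqref{IP} to say that the numbers $w_j$ you read off are Nevanlinna--Pick data: by the paper's formulation, data means $w_j=\varphi(z_j)$ for some $\varphi\in\bR_0$, and the existence of such an interpolant is exactly the surjectivity statement your proposal assumes rather than proves.

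The paper closes this gap as follows: by Proposition~\ref{interlace} the functions $m_{[0,n]}=-Q_{n+1}/P_{n+1}$ built from the abstract pencil take the value $w_j$ at $z_j$ for every $n\ge j$; by Proposition~\ref{prec_R1} (the bound $|m_{[0,n]}(\lambda)|\le 1/|\Im\lambda|$, which rests on $(J_{[0,n]}^{-1}e_0,e_0)\le 1$ and hence on the factorization $J=L^*L$) this family is precompact, so a locally uniform limit $\varphi$ exists and automatically satisfies $\varphi(z_j)=w_j$; and the normalization \eqref{h_con_d}, $(J_{[0,n]}^{-1}e_0,e_0)\to 1$, is what guarantees the limit is a genuine $\bR_0$-function, i.e.\ corresponds to a probability measure rather than a sub-probability one. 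Without some argument of this type (or an equivalent Weyl-disc argument), your construction only shows the forward map is a bijection onto its image, which is weaker than the stated theorem. Your inductive bookkeeping in the forward direction is fine and slightly more explicit than the paper's one-line appeal to \eqref{f_a} and \eqref{f_b}; also note that along the way you should record that $\varphi_k(z_j)\in\dC_+$ (so $1/\varphi_k(z_j)$ makes sense), which follows since each $\varphi_k$ is a nonzero $\bR_0$-function, exactly as Proposition~\ref{schur_step} provides.
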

\begin{proof}
It follows from formulas~\eqref{f_b} and~\eqref{f_a} that the data $\{z_k\}_{k=0}^{\infty}$, $\{w_k\}_{k=0}^{\infty}$
uniquely determine the linear pencil, that is, the following numbers
\begin{equation}\label{penreprN}
\mathfrak{a}_j=a_j^{(1)},\quad \mathfrak{b}_j={z}_jb_j,\quad
\mathfrak{c}_j=a_j^{(2)},\quad \mathfrak{d}_j=b_j,\quad j\in\dZ_+,
\end{equation}
where $a_j^{(1)}\in\dR$, $a_j^{(2)}>0$, $b_j>0$, $z_j\in\dC_+$, and $\mathfrak{c}_j=1+\mathfrak{d}_j^2$.
Let us suppose that we are given a set of numbers that can be represented as above. 
Then we see from~\eqref{penreprN}
that $z_j=\mathfrak{b}_j/\mathfrak{d}_j$. Finally, by virtue of Proposition~\ref{interlace} we get that
the numbers $w_j$ are uniquely determined by the formula
\[
w_j=-\frac{Q_n(z_j)}{P_n(z_j)}
\]
for large enough $n$. It remains to note that in view of the precompactness of the family $-{Q_n}/{P_n}$
(see Proposition~\ref{prec_R1}) and~\eqref{h_con_d} there exists a function $\varphi\in\bR_0$ which satisfies
the underlying interpolation relation $\varphi(z_j)=w_j$, $j\in\dZ_+$.
\end{proof}

\section{The Weyl circles}

The classical Weyl circles approach to Nevanlinna-Pick problems can be found in~\cite[Section~IV.6]{Garnett}. 
In this section, following~\cite[Section~I.2.3]{A}, we adapt the notion of the Weyl circles to the linear pencil case.

Let us  begin by considering the function
\begin{equation}\label{Ach7}
 \omega_j(\lambda,\tau)=-\frac{Q_j(\lambda)-\tau Q_{j-1}(\lambda)}{P_j(\lambda)-\tau P_{j-1}(\lambda)},
\end{equation}
where $\lambda\in\dC\setminus\dR$, $\tau\in\dR\cup\{\infty\}$, and $j\in\dN$. Obviously, 
from the definition we have that
\[ 
\omega_j(\lambda,\infty)=\omega_{j-1}(\lambda,0).
\] 
Moreover, in view of~\eqref{IP} we have that $\omega_j(z_k,\tau)=w_k$ and 
$\omega_j(\overline{z}_k,\tau)=\overline{w}_k$ for $j=k+2, k+3,\dots$.
So, formula~\eqref{Ach7} gives a parametrization of [j-1/j] rational solutions to the truncated
Nevanlinna-Pick problems. Another such a parametrization is given in~\cite[Theorem 6.1.3]{Bul} in terms of orthogonal
rational functions of the first and second kinds.

Due to Proposition~\ref{interlace},  the number $-\frac{P_{j-1}(\lambda)}{P_j(\lambda)}$ is not real for any 
$\lambda\in\dC\setminus\dR$ and, therefore, we see that the set
\[
K_{j}(\lambda)=\{\omega_j(\lambda,\tau): \tau\in\dR\cup\{\infty\}\}.
\]
is a circle. In addition, we have that $K_j(\overline{\lambda})=\overline{K_j(\lambda)}$. So,
we can consider only the case when $\lambda\in\dC_+$.
The following statement contains a characterization of the circle $K_j(\lambda)$.
\begin{theorem}
Let $\lambda\in\dC_+\setminus\{z_k\}_{k=0}^{j-1}$ be a fixed number. Then the center of $K_j(\lambda)$ is
\begin{equation}\label{Ach8}
-\frac{Q_j(\lambda)\overline{P_{j-1}(\lambda)}-Q_{j-1}(\lambda)\overline{P_{j}(\lambda)}}
{P_j(\lambda)\overline{P_{j-1}(\lambda)}-P_{j-1}(\lambda)\overline{P_{j}(\lambda)}},
\end{equation}
and the radius of $K_j(\lambda)$ is
\begin{equation}\label{Ach9}
\frac{1}{|\lambda-\overline{\lambda}|}\frac{1}{\sum_{k=0}^{j-1}|\widehat{P}_k(\lambda)+\mathfrak{d}_{k-1}\widehat{P}_{k-1}(\lambda)|^2}.
\end{equation}
Besides, the equation of $K_j(\lambda)$ can be represented as follows (setting $\mathfrak{d}_{-1}=0$)
\begin{equation}\label{Ach10}
\sum_{k=0}^{j-1}|\omega(\widehat{P}_k(\lambda)+\mathfrak{d}_{k-1}\widehat{P}_{k-1}(\lambda))+
\widehat{Q}_k(\lambda)+\mathfrak{d}_{k-1}\widehat{Q}_{k-1}(\lambda)|^2-
\frac{\omega-\overline{\omega}}{\lambda-\overline{\lambda}}=0.
\end{equation}
\end{theorem}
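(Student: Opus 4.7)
The plan mirrors Akhiezer's classical Weyl disk construction (\cite[Section~I.2.3]{A}), adapted to the linear pencil setting. The proof proceeds in four steps, and the tools are all in place: the recurrence identities (forCD1) and (forCD2), the Christoffel--Darboux formula (CDmain), the Liouville--Ostrogradsky identity (Ostrogr2), and most importantly the identity (forKW).

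First I would invert the parametrization. Solving $\omega=\omega_j(\lambda,\tau)$ from (Ach7) for $\tau$ gives
\[
\tau=\frac{\omega P_j(\lambda)+Q_j(\lambda)}{\omega P_{j-1}(\lambda)+Q_{j-1}(\lambda)},
\]
so $\omega\in K_j(\lambda)$ if and only if $\tau\in\dR\cup\{\infty\}$, which is the condition $\Im\tau=0$. Applying the identity (forKW) with index shifted to $j-1$, the right-hand side is a nonnegative multiple of $\Im\tau$ (times the modulus-squared denominator), so $\Im\tau=0$ is equivalent to the vanishing of the left-hand side; this is precisely (Ach10).

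Next I would recognize (Ach10) as the equation of a circle in $\omega$. Expanding the squares, it takes the form $A|\omega|^2+B\overline{\omega}+\overline{B}\omega+C=0$, where $A=\sum_{k=0}^{j-1}|u_k|^2$ with $u_k=\widehat{P}_k+\mathfrak{d}_{k-1}\widehat{P}_{k-1}$, $C=\sum_{k=0}^{j-1}|v_k|^2$ with $v_k$ the analogous combination of $\widehat{Q}$'s, and $B=\sum_{k=0}^{j-1}\overline{u_k}v_k+\frac{1}{\lambda-\overline{\lambda}}$. Proposition~\ref{positive} ensures $A>0$, so the center is $-B/A$ and the radius is $\sqrt{|B|^2-AC}/A$.

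To reach the explicit forms (Ach8) and (Ach9), I would simplify $A$ and $B$ in closed form. For $A$, take $\zeta=\lambda$ in the Christoffel--Darboux identity (CDmain):
\[
A=(J\pi_{[0,j-1]}(\lambda),\pi_{[0,j-1]}(\lambda))=\frac{P_j(\lambda)\overline{P_{j-1}(\lambda)}-\overline{P_j(\lambda)}P_{j-1}(\lambda)}{(\lambda-\overline{\lambda})\prod_{k=0}^{j-2}b_k^2|\lambda-z_k|^2}.
\]
For $B$, I would derive a \emph{mixed} CD identity by repeating the argument that produced (CDmain), but starting from the inner product $((H-\lambda J)\xi_{[0,j-1]}(\lambda),\pi_{[0,j-1]}(\lambda))$ and using (forCD1), (forCD2); the boundary contribution cancels the $1/(\lambda-\overline{\lambda})$ term in $B$ and leaves
\[
B=\frac{Q_j(\lambda)\overline{P_{j-1}(\lambda)}-Q_{j-1}(\lambda)\overline{P_j(\lambda)}}{(\lambda-\overline{\lambda})\prod_{k=0}^{j-2}b_k^2|\lambda-z_k|^2}.
\]
Dividing $-B$ by $A$, the common denominator cancels, yielding (Ach8).

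The main obstacle is the radius (Ach9). This requires computing $|B|^2-AC$ via a Gram-type identity; the key input is the Liouville--Ostrogradsky relation (Ostrogr2), which says that the Wronskian $Q_jP_{j-1}-Q_{j-1}P_j$ is the product $\pm\prod_{k=0}^{j-2}b_k^2(\lambda-z_k)(\lambda-\overline{z}_k)$. Combining this with the identities for $A$, $B$, $C$ should, after algebraic manipulation, reduce $|B|^2-AC$ to a quantity depending only on $|\lambda-\overline{\lambda}|$ and the denominator $\prod_{k=0}^{j-2}b_k^2|\lambda-z_k|^2$, giving the claimed radius. The algebra here is the delicate part; it is essentially the adaptation to the pencil setting of the classical computation Akhiezer carries out in~\cite[Section~I.2.3]{A}.
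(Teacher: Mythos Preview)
Your approach is correct but inverts the order of the paper's argument and, as a result, takes on unnecessary work. The paper does \emph{not} first derive the circle equation (Ach10) and then extract the center and radius from the quadratic form $A|\omega|^2+B\overline{\omega}+\overline{B}\omega+C=0$. Instead it exploits the fact that $\tau\mapsto\omega_j(\lambda,\tau)$ in (Ach7) is an explicit M\"obius transformation: the image of $\dR\cup\{\infty\}$ is a circle whose center is $\omega_j(\lambda,\tau_0)$ with $\tau_0=\overline{P_j(\lambda)}/\overline{P_{j-1}(\lambda)}$ (the conjugate of the pole), yielding (Ach8) at once, and whose radius is
\[
\left|\frac{Q_j(\lambda)P_{j-1}(\lambda)-Q_{j-1}(\lambda)P_j(\lambda)}
{P_j(\lambda)\overline{P_{j-1}(\lambda)}-P_{j-1}(\lambda)\overline{P_j(\lambda)}}\right|.
\]
Here the numerator is identified via (Ostrogr2) and the denominator via (CDmain) with $\zeta=\lambda$, giving (Ach9) in two lines. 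Only then is the equation (Ach10) recorded, by the same manipulation you describe (or equivalently (forKW)).

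What this buys: the paper never has to compute your quantity $C$, nor the mixed Christoffel--Darboux identity for $B$, nor the discriminant $|B|^2-AC$. Your route works---the mixed identity follows from (forCD1)--(forCD2) exactly as (CDmain) does, and the discriminant does collapse via (Ostrogr2)---but it is the long way around. The M\"obius viewpoint short-circuits the Gram-determinant algebra entirely.
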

\begin{proof}
By the same reasoning as in the proof of~\cite[Theorem~1.2.3]{A} we conclude that
\[
\omega_j(\lambda,\tau)=-\frac{Q_j(\lambda)\overline{P_{j-1}(\lambda)}-Q_{j-1}(\lambda)\overline{P_{j}(\lambda)}}
{P_j(\lambda)\overline{P_{j-1}(\lambda)}-P_{j-1}(\lambda)\overline{P_{j}(\lambda)}}+
\left|\frac{Q_j(\lambda){P_{j-1}(\lambda)}-Q_{j-1}(\lambda){P_{j}(\lambda)}}
{P_j(\lambda)\overline{P_{j-1}(\lambda)}-P_{j-1}(\lambda)\overline{P_{j}(\lambda)}}\right| e^{i\theta},
\]
where $\theta=\theta(\tau)$ is real. The latter relation immediately gives us~\eqref{Ach8} and  the formula for the radius
of $K_j(\lambda)$
\[
\left|\frac{Q_j(\lambda){P_{j-1}(\lambda)}-Q_{j-1}(\lambda){P_{j}(\lambda)}}
{P_j(\lambda)\overline{P_{j-1}(\lambda)}-P_{j-1}(\lambda)\overline{P_{j}(\lambda)}}\right|,
\]
which by means of~\eqref{Ostrogr2} and~\eqref{CDmain} can be reduced to~\eqref{Ach9}.

The rest of the proof is identical to the proof of~\cite[Theorem~1.2.3]{A}.
\end{proof}

Denote by ${\bf K}_j(\lambda)$ the closure of the interior of $K_j(\lambda)$. Then the following statement holds true.
\begin{corollary}
Let $\lambda\in\dC_+\setminus\{z_k\}_{k=0}^{j-1}$ be a fixed number.  Then
the set ${\bf K}_j(\lambda)$ is a set of numbers $\omega\in\dC$ satisfying the inequality
\begin{equation}\label{WeylBall}
\sum_{k=0}^{j-1}|\omega(\widehat{P}_k(\lambda)+\mathfrak{d}_{k-1}\widehat{P}_{k-1}(\lambda))+
\widehat{Q}_k(\lambda)+\mathfrak{d}_{k-1}\widehat{Q}_{k-1}(\lambda)|^2\le
\frac{\omega-\overline{\omega}}{\lambda-\overline{\lambda}}.
\end{equation}
\end{corollary}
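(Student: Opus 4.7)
The strategy is to observe that inequality~\eqref{WeylBall} differs from the boundary equation~\eqref{Ach10} of $K_j(\lambda)$ only in the replacement of ``$=$'' by ``$\le$''. Hence the only real content is to decide on which side of $K_j(\lambda)$ the closed disk ${\bf K}_j(\lambda)$ lies.

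Denote by $F(\omega)$ the left-hand side of~\eqref{Ach10}, viewed as a real-valued function of $\omega\in\dC$. Expanding the squared moduli, $F$ has the form $A|\omega|^{2}+B\omega+\overline{B}\,\overline{\omega}+C$, where the $(\omega-\overline{\omega})/(\lambda-\overline{\lambda})$ contribution is absorbed into the linear and constant parts, and
\[
A=\sum_{k=0}^{j-1}\bigl|\widehat{P}_k(\lambda)+\mathfrak{d}_{k-1}\widehat{P}_{k-1}(\lambda)\bigr|^{2}.
\]
Since $\widehat{P}_0(\lambda)=P_0(\lambda)=1$ by~\eqref{InConP} and~\eqref{wTransform}, and $\mathfrak{d}_{-1}=0$ by convention, the $k=0$ summand alone contributes $1$ to $A$; in particular $A>0$, and consequently $F(\omega)\to +\infty$ as $|\omega|\to\infty$.

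By the Theorem just proved, the zero set of $F$ is precisely the circle $K_j(\lambda)$, so $\dC\setminus K_j(\lambda)$ has two open components: the bounded interior of the disk and the unbounded exterior. On each component $F$ keeps constant sign by continuity; since $F$ is positive outside a large ball, the sign on the exterior is positive, and hence the sign on the interior is negative. Taking closures yields ${\bf K}_j(\lambda)=\{\omega\in\dC:F(\omega)\le 0\}$, which is precisely~\eqref{WeylBall}. There is no genuine obstacle; the only small point to make explicit is the strict positivity of the leading coefficient $A$ of the quadratic form, which is immediate from $\widehat{P}_0\equiv 1$.
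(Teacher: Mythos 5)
Your argument is correct, and it is essentially self-contained given the Theorem, but it is not the route the paper takes: the paper simply refers to the classical argument of Akhiezer (Section I.2.3 of \emph{The classical moment problem}), which characterizes the disk through the linear-fractional parametrization $\omega_j(\lambda,\tau)$ — the closed disk is the image of a closed half-plane in $\tau$ — together with the identity of type~\eqref{forKW}, which expresses the quadratic form on the left of~\eqref{WeylBall} in terms of $\Im\bigl((\omega P_{j+1}+Q_{j+1})/(\omega P_j+Q_j)\bigr)$, so that the sign of the form is read off directly. Your alternative avoids the parametrization entirely: you take the boundary equation~\eqref{Ach10} as given, observe that $F(\omega)=A|\omega|^2+B\omega+\overline{B}\overline{\omega}+C$ is a real quadratic with $A\ge|\widehat{P}_0(\lambda)|^2=1>0$, and conclude that its zero set being the nondegenerate circle $K_j(\lambda)$ forces $\{F\le 0\}$ to be the closed disk bounded by it. One small point worth making explicit: the inference ``positive on the exterior, hence negative on the interior'' is not valid for an arbitrary continuous $F$ (e.g.\ $(|\omega-\omega_c|-r)^2$ vanishes exactly on a circle yet is positive on both sides); it is the quadratic structure you exhibited that saves it — completing the square gives $F=A|\omega-\omega_c|^2+D$, and a zero set of positive radius forces $D<0$, whence $F<0$ exactly inside. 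With that sentence added your proof is complete; what it buys is brevity and independence from~\eqref{forKW}, while the paper's (Akhiezer-style) route keeps track of which $\tau$-half-plane maps onto the disk, information that is reused for the nested-disk statement of Corollary~\ref{Kembed}.
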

Furthermore, we can get a relation between the discs ${\bf K}_{j+1}(\lambda)$ and ${\bf K}_{j}(\lambda)$.
\begin{corollary}\label{Kembed}
We have that
\[
{\bf K}_{j+1}(\lambda)\subseteq{\bf K}_{j}(\lambda),\quad j\in\dN.
\]
Besides, the circles $K_{j+1}(\lambda)$ and $K_{j}(\lambda)$ have at least one common point.
\end{corollary}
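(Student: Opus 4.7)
The plan is to read off both assertions directly from the inequality characterization of the closed disks given by~\eqref{WeylBall} and from the parametrization~\eqref{Ach7}, without any new computation.

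For the inclusion ${\bf K}_{j+1}(\lambda)\subseteq{\bf K}_{j}(\lambda)$, I would compare the defining inequalities from~\eqref{WeylBall} for the two disks. The right-hand side $(\omega-\overline{\omega})/(\lambda-\overline{\lambda})$ is the same in both cases, while the left-hand side for ${\bf K}_{j+1}(\lambda)$ differs from that for ${\bf K}_{j}(\lambda)$ by exactly the single additional summand
\[
\bigl|\omega(\widehat{P}_j(\lambda)+\mathfrak{d}_{j-1}\widehat{P}_{j-1}(\lambda))+\widehat{Q}_j(\lambda)+\mathfrak{d}_{j-1}\widehat{Q}_{j-1}(\lambda)\bigr|^2 \ge 0.
\]
Since this extra term is nonnegative, any $\omega$ that satisfies the $(j+1)$-term inequality automatically satisfies the $j$-term one, which is precisely the claimed inclusion.

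For the common point of the two circles, I would observe directly from the definition~\eqref{Ach7} that
\[
\omega_{j+1}(\lambda,\infty)=-\frac{Q_{j}(\lambda)}{P_{j}(\lambda)}=\omega_{j}(\lambda,0),
\]
so that this common value lies on $K_{j+1}(\lambda)$ (corresponding to the parameter $\tau=\infty$) and simultaneously on $K_{j}(\lambda)$ (corresponding to $\tau=0$).

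No serious obstacle is anticipated: both assertions are structural consequences of material already in place. The only minor point to keep straight is the convention $\mathfrak{d}_{-1}=0$ used in the $k=0$ summand of~\eqref{WeylBall}, which ensures that the comparison of the two inequalities really does involve only the single added term indexed by $k=j$.
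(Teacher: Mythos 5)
Your proof is correct and follows essentially the same route as the paper, which simply defers to the classical nested-disk argument of Akhiezer (Section I.2.3): nesting via the monotonicity of the sums in the inequality \eqref{WeylBall}, and the common point $-Q_j(\lambda)/P_j(\lambda)=\omega_{j+1}(\lambda,\infty)=\omega_j(\lambda,0)$ via the parametrization \eqref{Ach7}. Nothing further is needed.
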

\begin{proof}
The proof of the both corollaries is in line with the proof of the analogous statements given in~\cite[Section~2.3]{A}.
\end{proof}

Now, we see that there are two options for the sequence ${\bf K}_{j}(\lambda)$. Namely, we can have a limit point or a limit circle.
\begin{theorem}\label{limitpoint}
Let $\lambda\in\dC_+\setminus\{z_k\}_{k=0}^{\infty}$  be a fixed number. Then the sequence ${\bf K}_{j}(\lambda)$ converges to a point iff
\[
{\sum_{k=0}^{\infty}|\widehat{P}_k(\lambda)+\mathfrak{d}_{k-1}\widehat{P}_{k-1}(\lambda)|^2}=\infty.
\]
\end{theorem}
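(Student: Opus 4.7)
The plan is to read the result directly off the radius formula~\eqref{Ach9} together with the nestedness of the closed discs established in Corollary~\ref{Kembed}. Introduce the abbreviation
\[
r_j(\lambda) := \frac{1}{|\lambda-\overline{\lambda}|\,S_j(\lambda)},\qquad
S_j(\lambda) := \sum_{k=0}^{j-1}|\widehat{P}_k(\lambda)+\mathfrak{d}_{k-1}\widehat{P}_{k-1}(\lambda)|^2,
\]
so that $r_j(\lambda)$ is the radius of $K_j(\lambda)$. Because $S_j(\lambda)$ is non-decreasing in $j$, the sequence $r_j(\lambda)$ is non-increasing, and $r_j(\lambda)\to 0$ if and only if $S_j(\lambda)\to\infty$, i.e.\ exactly when the series in question diverges.

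Next, by Corollary~\ref{Kembed}, $\{\mathbf{K}_j(\lambda)\}_{j\in\dN}$ is a decreasing sequence of non-empty closed (bounded) discs in $\dC$, hence of non-empty compact convex sets. Consequently $\mathbf{K}_\infty(\lambda):=\bigcap_{j\ge 1}\mathbf{K}_j(\lambda)$ is a non-empty compact convex subset of $\dC$ whose diameter is at most $2r_j(\lambda)$ for every $j$, and therefore at most $2\lim_{j\to\infty} r_j(\lambda)$. If the series diverges, this upper bound is $0$, and $\mathbf{K}_j(\lambda)$ shrinks to a single point, which proves the ``if'' direction.

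For the converse, suppose the series converges, so $r_j(\lambda)\to r>0$. Denoting by $c_j$ the centre of $\mathbf{K}_j(\lambda)$ given by~\eqref{Ach8}, the inclusion $\mathbf{K}_{j+1}(\lambda)\subseteq\mathbf{K}_j(\lambda)$ together with the fact that $c_{j+1}\in\mathbf{K}_j(\lambda)$ forces $|c_{j+1}-c_j|\le r_j-r_{j+1}$; since $\{r_j\}$ is convergent and monotone, $\{c_j\}$ is Cauchy, with limit some $c_\infty\in\dC$. Passing to the limit in the defining inequalities, the closed disc of radius $r$ around $c_\infty$ is contained in $\mathbf{K}_\infty(\lambda)$, which therefore has diameter $\ge 2r>0$ and cannot be a single point. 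This completes the equivalence.

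There is no real obstacle here: the only point that requires a line or two of care is the Cauchy argument for the centres in the convergent-series case, and this is a general property of nested closed discs rather than anything specific to the linear pencil. Everything substantive has already been packaged into~\eqref{Ach9} and Corollary~\ref{Kembed}.
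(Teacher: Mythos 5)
Your proof is correct and follows the same route as the paper, which simply notes that the theorem is immediate from the nesting of the discs (Corollary~\ref{Kembed}) and the radius formula~\eqref{Ach9}; you have merely written out the standard nested-disc details (radii monotone, centres Cauchy via $|c_{j+1}-c_j|\le r_j-r_{j+1}$, limiting disc of radius $r=\lim r_j$ contained in the intersection) that the paper leaves implicit. No gaps.
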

\begin{proof}
The proof is immediate from Corollary~\ref{Kembed} and~\eqref{Ach9}.
\end{proof}

Next, we obtain the existence of the Weyl solution.

\begin{theorem}\label{WeylSolution}
For every $\lambda\in\dC_+\setminus\{z_k\}_{k=0}^{\infty}$ there exists a number $\omega=\omega(\lambda)\in\dC_+$ such that
\begin{equation}\label{WeylSolIneq}
\sum_{k=0}^{\infty}|\omega(\widehat{P}_k(\lambda)+\mathfrak{d}_{k-1}\widehat{P}_{k-1}(\lambda))+
\widehat{Q}_k(\lambda)+\mathfrak{d}_{k-1}\widehat{Q}_{k-1}(\lambda)|^2\le
\frac{\omega-\overline{\omega}}{\lambda-\overline{\lambda}}.
\end{equation}
 \end{theorem}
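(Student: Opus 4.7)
The plan is to realize the desired $\omega$ as a point of the intersection $\bigcap_{j\in\dN}{\bf K}_j(\lambda)$ of the Weyl discs introduced in the previous corollary, and then pass to the limit $j\to\infty$ in the disc characterization \eqref{WeylBall}.

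First I would invoke Corollary~\ref{Kembed}, which asserts that the sequence $\{{\bf K}_j(\lambda)\}_{j\in\dN}$ is a decreasing nested sequence of closed discs in $\dC$. Each ${\bf K}_j(\lambda)$ is compact, so by the finite intersection property the intersection $\bigcap_{j\in\dN}{\bf K}_j(\lambda)$ is nonempty. Pick any $\omega$ in this intersection. By \eqref{WeylBall}, for every $j\in\dN$ we then have
\[
\sum_{k=0}^{j-1}|\omega(\widehat{P}_k(\lambda)+\mathfrak{d}_{k-1}\widehat{P}_{k-1}(\lambda))+\widehat{Q}_k(\lambda)+\mathfrak{d}_{k-1}\widehat{Q}_{k-1}(\lambda)|^2 \le \frac{\omega-\overline{\omega}}{\lambda-\overline{\lambda}}.
\]
The left-hand side is a partial sum of nonnegative terms, hence nondecreasing in $j$, and it is uniformly bounded by the $j$-independent right-hand side. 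Letting $j\to\infty$ yields the series convergence and the inequality \eqref{WeylSolIneq}.

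It remains to verify that this $\omega$ lies in the open upper half plane. Using $\widehat{P}_0=1$, $\widehat{Q}_0=0$ and the convention $\mathfrak{d}_{-1}=0$, the $k=0$ summand equals $|\omega|^2$, so
\[
|\omega|^2 \le \frac{\omega-\overline{\omega}}{\lambda-\overline{\lambda}}=\frac{\Im\omega}{\Im\lambda}.
\]
Since $\Im\lambda>0$, this already forces $\Im\omega\ge 0$. To exclude the boundary case $\omega=0$, inspect the $k=1$ term: from the initial conditions $Q_0=0$, $Q_1=-1$ and the transformation \eqref{wTransform} we have $\widehat{Q}_1(\lambda)=-1/(b_0(z_0-\lambda))\ne 0$ because $\lambda\ne z_0$. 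If $\omega=0$, the inequality \eqref{WeylSolIneq} truncated at $k=1$ would read $|\widehat{Q}_1(\lambda)|^2\le 0$, which is impossible. Therefore $\omega\ne 0$, and consequently $\Im\omega>0$, i.e., $\omega\in\dC_+$.

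I do not anticipate a substantive obstacle: the heavy lifting was done in establishing the nesting of Weyl discs (Corollary~\ref{Kembed}) and the inequality \eqref{WeylBall}. The only delicate point is ensuring strict positivity of $\Im\omega$ rather than merely $\Im\omega\ge 0$, which is dispatched by the explicit nonvanishing of $\widehat{Q}_1(\lambda)$ for $\lambda\ne z_0$ as above.
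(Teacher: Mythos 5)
Your proof is correct and follows essentially the same route as the paper, which derives the statement directly from the nested-disc Corollary~\ref{Kembed} and the inequality~\eqref{WeylBall} by taking a point in the intersection of the compact discs ${\bf K}_j(\lambda)$ and letting $j\to\infty$. Your additional verification that $\Im\omega>0$ (via the $k=0$ and $k=1$ terms, using $\widehat{P}_0=1$, $\widehat{Q}_0=0$, $\widehat{Q}_1(\lambda)\ne 0$ for $\lambda\ne z_0$) is a worthwhile detail the paper leaves implicit.
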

\begin{proof}
The statement is a straightforward consequence of Corollary~\ref{Kembed} and the inequality~\eqref{WeylBall}.
\end{proof}

Finally, it should be noticed that the mentioned parametrization from~\cite{Bul} leads to a slightly different 
but very similar theory of nested disks~\cite[Section 10]{Bul}. That theory is equivalent to the presented one in the sense that
the underlying Nevanlinna-Pick Problems are the same. 

\section{The underlying symmetric operators}

In this section we reduce the linear pencil in question to an
operator generated by the formal matrix expression~$J^{-\frac{1}{2}}HJ^{-\frac{1}{2}}$. Namely, we show that 
this operator is a densely defined symmetric operator. 

Since $e_j\in\dom J\subset\dom J^{\frac{1}{2}}$ the vectors $f_j:=J^{\frac{1}{2}}e_j$, $j\in\dZ_+$, belong to $\ell^2$.
The relation $\ker J^{\frac{1}{2}}=\{0\}$ implies that the linear span 
\[
\cF=\span\{f_j\}_{j=0}^{\infty}=\left\{\sum_{k=0}^{n}c_kf_k: c_k\in\dC, n\in\dZ_+\right\}
\]
is dense in $\ell^2$.  
In view of~\eqref{ForRoot1}, we can also introduce the vectors $g_j:=J^{-\frac{1}{2}}e_j$, $j\in\dZ_+$, which
lie in $\ell^2$. Moreover, the linear span $\cG=\span\{g_j\}_{j=0}^{\infty}$ is dense in $\ell^2$. Besides, we have that
that the systems $\{f_j\}_{j=0}^{\infty}$ and  $\{g_j\}_{j=0}^{\infty}$ are bi-orthogonal, i.e. 
\[
(f_j,g_k)=\begin{cases}
0,&j\ne k,\\
1,& j=k.
\end{cases}
\]
As a consequence, we get that there is a one-to-one correspondence between $h\in\ell^2$ and the formal series
\[
\sum_{k=0}^{\infty}(h,g_k)f_k,\quad \sum_{k=0}^{\infty}(h,f_k)g_k.
\]
In this case, we will write $h\sim \sum_{k=0}^{\infty}(h,g_k)f_k$ or  $h\sim \sum_{k=0}^{\infty}(h,f_k)g_k$. 
Next, we see that  (setting $\mathfrak{b}_{-1}=0$ for convenience)
\[
J^{-\frac{1}{2}}HJ^{-\frac{1}{2}}f_j=\mathfrak{b}_{j-1}g_{j-1}+
\mathfrak{a}_jg_j+\overline{\mathfrak{b}}_jg_{j+1},\quad j\in\dZ_+.
\]
So, we have that $J^{-\frac{1}{2}}HJ^{-\frac{1}{2}}:\cF\mapsto\cG$.
Thus the domain of the matrix expression $J^{-\frac{1}{2}}HJ^{-\frac{1}{2}}$ is dense in $\ell^2$. 
\begin{proposition}\label{formalsymm}
The formal matrix expression $J^{-\frac{1}{2}}HJ^{-\frac{1}{2}}$ generates a densely defined symmetric operator with the deficiency
indices either (1,1) or (0,0).
\end{proposition}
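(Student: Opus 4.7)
The plan is to verify in turn that $J^{-\frac{1}{2}}HJ^{-\frac{1}{2}}$ restricted to $\cF$ is densely defined and symmetric, that each deficiency index is at most~$1$, and that the two coincide. Density of the domain has been established just before the proposition, so for symmetry I would reduce by linearity to checking the identity $(J^{-\frac{1}{2}}HJ^{-\frac{1}{2}}f_k, f_j) = (f_k, J^{-\frac{1}{2}}HJ^{-\frac{1}{2}}f_j)$ on basis vectors. The biorthogonality $(g_m, f_n) = (J^{-\frac{1}{2}}e_m, J^{\frac{1}{2}}e_n) = (e_m, e_n) = \delta_{mn}$, which is legitimate because $J^{\pm\frac{1}{2}}$ are self-adjoint and $e_m \in \dom J^{\frac{1}{2}}$, reduces both sides to the $(j,k)$-entry $H_{jk}$ of the Hermitian matrix $H$, and Hermiticity then closes the computation.

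For the defect bound, fix $\lambda\in\dC\setminus\dR$ and $u$ in the deficiency subspace at $\lambda$, and set $\xi_k := (u, g_k)$. Pairing $(J^{-\frac{1}{2}}HJ^{-\frac{1}{2}}v, u) = \bar\lambda(v, u)$ against $v=f_k$ and conjugating yields
\[
\bar{\mathfrak b}_{k-1}\xi_{k-1} + \mathfrak a_k\xi_k + \mathfrak b_k\xi_{k+1} = \lambda\,(u, f_k).
\]
The crucial observation is that, because $J$ is tridiagonal, the expansion $f_k = Jg_k = \mathfrak d_{k-1}g_{k-1} + \mathfrak c_k g_k + \mathfrak d_k g_{k+1}$ is an exact finite $\ell^2$-sum, so $(u, f_k) = \mathfrak d_{k-1}\xi_{k-1} + \mathfrak c_k\xi_k + \mathfrak d_k\xi_{k+1}$ with no further domain hypothesis on $u$. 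Hence $\xi$ solves the pencil recurrence $(H-\lambda J)\xi = 0$; its boundary condition at $k=0$ (with $\mathfrak b_{-1} = \mathfrak d_{-1} = 0$) pins $\xi_1$ to a scalar multiple of $\xi_0$, and the recurrence determines the whole sequence uniquely from $\xi_0$. Since the map $u \mapsto ((u, g_k))_k$ is injective on $\ell^2$ by density of $\cG$, we conclude $\dim\ker\bigl((J^{-\frac{1}{2}}HJ^{-\frac{1}{2}})^* - \lambda\bigr) \le 1$.

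To exclude the asymmetric possibilities $(1,0)$ and $(0,1)$, I would exploit the $\lambda\leftrightarrow\bar\lambda$ symmetry of the pencil: the recurrence~\eqref{rec_rel} has coefficients that are real polynomials in $\lambda$ (using $(\lambda - z_{j-1})(\lambda - \bar z_{j-1}) = \lambda^2 - 2\Re z_{j-1}\lambda + |z_{j-1}|^2$), so $\overline{P_j(\lambda)} = P_j(\bar\lambda)$ and likewise for $Q_j$. Combined with the Liouville--Ostrogradsky identity~\eqref{Ostrogr2} and the nested Weyl discs developed in Section~5, this forces the limit-point/limit-circle alternative to be independent of the choice of non-real $\lambda$, yielding $n_+ = n_- = 1$ in the limit-circle case and $n_+ = n_- = 0$ in the limit-point case. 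The main obstacle lies precisely here: unlike in the classical Jacobi setting, the complex pencil off-diagonal $\mathfrak b_k - \lambda\mathfrak d_k$ is not intertwined with its $\bar\lambda$-counterpart by any obvious componentwise gauge, so the $\lambda$-invariance of the dichotomy must be extracted from the Weyl-disc geometry rather than from a direct antilinear involution on $\ell^2$.
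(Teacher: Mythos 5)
Your first two steps coincide with the paper's own proof: the symmetry check via the biorthogonal systems $\{f_j\}$, $\{g_j\}$, and the reduction of the defect equation to the pencil recurrence~\eqref{r_rl} (using that $f_k$ is the \emph{finite} combination $\mathfrak{d}_{k-1}g_{k-1}+\mathfrak{c}_kg_k+\mathfrak{d}_kg_{k+1}$) are exactly what is done there, and they do give $\dim\cN_{\lambda}\le 1$ — provided you also note that the leading coefficients $\mathfrak{b}_k-\lambda\mathfrak{d}_k=b_k(z_k-\lambda)$ must not vanish, i.e.\ you should work at $\lambda\notin\{z_k\}\cup\{\overline{z}_k\}$ and invoke the constancy of the index in each half plane for the remaining points.

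The final step, which you leave as an acknowledged obstacle, is where the actual content lies, and two pieces are missing. First, to make the Weyl discs bear on the deficiency indices you must relate $\ell^2$-membership of the formal defect vector $h\sim\sum_k\overline{\widehat{P}_k(\lambda)}f_k$ to a computable quantity: the paper uses the factorization $J=L^*L$ to get $\bigl\Vert\sum_{k=0}^{n}\overline{\widehat{P}_k(\lambda)}f_k\bigr\Vert^2=\sum_{k=0}^{n}|\widehat{P}_k(\lambda)+\mathfrak{d}_{k-1}\widehat{P}_{k-1}(\lambda)|^2$ (see~\eqref{chvost}), proving sufficiency of convergence of this series via weak convergence against the total system $\{g_k\}$ and necessity by writing $h=J^{\frac{1}{2}}h_0$. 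Without this bridge, the limit-point/limit-circle dichotomy (a statement about the series) says nothing yet about $\dim\cN_{\lambda}$. Second, the $\lambda\leftrightarrow\overline{\lambda}$ invariance that you call the main obstacle is resolved by precisely the radius formula~\eqref{Ach9} you cite: the real-coefficient symmetry you observed gives $K_j(\overline{\lambda})=\overline{K_j(\lambda)}$, hence equal radii, and~\eqref{Ach9} (whose proof via~\eqref{Ostrogr2} and~\eqref{CDmain} is already in Section~5) converts equal radii into the equality of the partial sums $\sum_{k=0}^{j-1}|\widehat{P}_k(\lambda)+\mathfrak{d}_{k-1}\widehat{P}_{k-1}(\lambda)|^2=\sum_{k=0}^{j-1}|\widehat{P}_k(\overline{\lambda})+\mathfrak{d}_{k-1}\widehat{P}_{k-1}(\overline{\lambda})|^2$ for every $j$ — no componentwise gauge or antilinear involution is needed. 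Combining this with the norm identity above yields $d_{\lambda}=d_{\overline{\lambda}}$, hence indices $(1,1)$ or $(0,0)$; this is exactly how the paper closes the argument, so your plan points at the right tools but stops short of the two statements that actually finish the proof.
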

 \begin{proof}
It is easy to see that
\[
(J^{-\frac{1}{2}}HJ^{-\frac{1}{2}}f_j,f_k)=(f_j,J^{-\frac{1}{2}}HJ^{-\frac{1}{2}}f_k),\quad j,k\in\dZ_+,
\]
that is, $J^{-\frac{1}{2}}HJ^{-\frac{1}{2}}$ is symmetric in $\ell^2$. Thus, the operator is closable and, in what follows,
by  $J^{-\frac{1}{2}}HJ^{-\frac{1}{2}}$ we denote the minimal closed operator defined by the matrix expression~$J^{-\frac{1}{2}}HJ^{-\frac{1}{2}}$.
Let $(J^{-\frac{1}{2}}HJ^{-\frac{1}{2}})^*$ be adjoint to $J^{-\frac{1}{2}}HJ^{-\frac{1}{2}}$ in $\ell^2$. 
By the definition, a vector $h\in\dom(J^{-\frac{1}{2}}HJ^{-\frac{1}{2}})^*$ 
if and only if there exists a vector $h^*\in\ell^2$ such that
\[
(J^{-\frac{1}{2}}HJ^{-\frac{1}{2}}f_k,h)=(f_k,h^*),\quad f\in k\in\dZ_+.
\]
Further, it can be rewritten as follows
\[
(\mathfrak{b}_{k-1}g_{k-1}+\mathfrak{a}_kg_k+\overline{\mathfrak{b}}_kg_{k+1}, h)=
(f_k,h^*),\quad k\in\dZ_+,
\]
which actually implies that
\[
y_k=\overline{\mathfrak{b}}_{k-1}x_{k-1}+\mathfrak{a}_kx_k+{\mathfrak{b}}_kx_{k+1},\quad k\in\dZ_+,
\]
where $h\sim\sum_{k=0}^{\infty}x_kf_k$ and $h^*\sim\sum_{k=0}^{\infty}y_kg_k$.
Thus, $h\in\dom(J^{-\frac{1}{2}}HJ^{-\frac{1}{2}})^*$ if and only if 
there exists $h^*\in\ell^2$ such that 
\[
h^*\sim\sum_{k=0}^{\infty}(\overline{\mathfrak{b}}_{k-1}x_{k-1}+\mathfrak{a}_kx_k+{\mathfrak{b}}_kx_{k+1})g_k.
\]
The next step is to determine the deficiency indices. In order to do that we should find nontrivial solutions of the equation 
\begin{equation}\label{indd}
((J^{-\frac{1}{2}}HJ^{-\frac{1}{2}})^*-\overline{\lambda} )h=0,\quad \Im\lambda\ne 0.
\end{equation}
Let $h\sim\sum_{k=0}^{\infty}x_kf_k$ be a solution to~\eqref{indd}. Then we obviously have that 
\[
(f_k, ((J^{-\frac{1}{2}}HJ^{-\frac{1}{2}})^*-\overline{\lambda} )h)=0,\quad k\in\dZ_+,
\]
which reduces to the following 
\[
\overline{\mathfrak{b}}_{k-1}\overline{x}_{k-1}+\mathfrak{a}_k\overline{x}_k+
{\mathfrak{b}}_k\overline{x}_{k+1}=
\lambda(f_k,h),\quad k\in\dZ_+.
\]
Observing that $(f_k,h)={\mathfrak{d}}_{k-1}\overline{x}_{k-1}+
\mathfrak{c}_k\overline{x}_k+{\mathfrak{d}}_k\overline{x}_{k+1}$, we arrive at
\[
(\overline{\mathfrak{b}}_{k-1}-\lambda {\mathfrak{d}}_{k-1})\overline{x}_{k-1}+
(\mathfrak{a}_k-\lambda{\mathfrak{c}}_{k})\overline{x}_k+
({\mathfrak{b}}_k-\lambda{\mathfrak{d}}_{k+1})\overline{x}_{k+1}=0,\quad k\in\dZ_+.
\]
In view of~\eqref{r_rl}, \eqref{wTransform}, and ~\eqref{InConP}, we conclude that
$\overline{x}_k=c\widehat{P}_k(\lambda)$. So, the linear space $\cN_{\lambda}$
of the solutions to~\eqref{indd} has dimension 1 if there exists an element $h\in\ell^2$ such that 
\begin{equation}\label{l2series}
h\sim\sum_{k=0}^{\infty}\overline{\widehat{P}_k(\lambda)}f_k.
\end{equation}
Otherwise,  the linear space $\cN_{\lambda}$ has dimension 0.

Let us find the condition for $h$ from~\eqref{l2series} to belong to $\ell^2$. 
First, we should check the weak convergence of the sequence $h_n=\sum_{k=0}^{n}\overline{\widehat{P}_k(\lambda)}f_k$.
Obviously, we have that $(h_n,g_k)\to(h,g_k)=\overline{\widehat{P}_k(\lambda)}$ as $n\to\infty$. Furthermore,
$\overline{\cG}=\overline{\span\{g_j\}_{j=0}^{\infty}}=\ell^2$. 
Consequently, according to the criterion  of the weak convergence we get that the convergence of~\eqref{l2series} 
is implied by the uniform boundedness of the following sequence
\begin{equation}\label{chvost}
\begin{split}
\Vert\sum_{k=0}^{n}\overline{\widehat{P}_k(\lambda)}f_k\Vert=(J\pi_{[0,n]}(\lambda),\pi_{[0,n]}(\lambda))=\\
=(L\pi_{[0,n]}(\lambda),L\pi_{[0,n]}(\lambda))=\sum_{k=0}^{n}|\widehat{P}_k(\lambda)+\mathfrak{d}_{k-1}\widehat{P}_{k-1}(\lambda)|^2.
\end{split}
\end{equation}
From~\eqref{chvost} we see that  the condition
\begin{equation}\label{l2shproof}
{\sum_{k=0}^{\infty}|\widehat{P}_k(\lambda)+\mathfrak{d}_{k-1}\widehat{P}_{k-1}(\lambda)|^2}<\infty
\end{equation}
guarantees the existence of $h$ satisfying~\eqref{l2series}. It turns out that this condition is also necessary. Indeed,
let us suppose the converse that ${\sum_{k=0}^{\infty}|\widehat{P}_k(\lambda)+\mathfrak{d}_{k-1}\widehat{P}_{k-1}(\lambda)|^2}=\infty$
and there exists $h\in\ell^2$ having the representation~\eqref{l2series}. Then it follows from~\eqref{indd} that $h\in\ran J^{-\frac{1}{2}}$ and, therefore,
$h=J^{\frac{1}{2}}h_0$ for some $h_0\in\ell^2$. The latter means that
\[
\Vert h\Vert=\Vert J^{\frac{1}{2}}h_0\Vert=\Vert Lh_0\Vert={\sum_{k=0}^{\infty}|\widehat{P}_k(\lambda)+\mathfrak{d}_{k-1}\widehat{P}_{k-1}(\lambda)|^2}=\infty,
\]
which yields the contradiction. So, $\dim\cN_{\lambda}=1$ if and only if~\eqref{l2shproof} holds true.

It is well known that for symmetric operators the deficiency index $d_{\lambda}=\dim\cN_{\lambda}$ is the same for
each $\lambda\in\dC_+$ as well as for each $\lambda\in\dC_-$. Further, it follows from~\eqref{Ach9} that 
\[
{\sum_{k=0}^{n-1}|\widehat{P}_k(\lambda)+\mathfrak{d}_{k-1}\widehat{P}_{k-1}(\lambda)|^2}=
{\sum_{k=0}^{n-1}|\widehat{P}_k(\overline{\lambda})+\mathfrak{d}_{k-1}\widehat{P}_{k-1}(\overline{\lambda})|^2}
\]
since the radii of $K_n(\lambda)$ and $K_n(\overline{\lambda})$  are equal.
The latter relation implies that $d_{\lambda}=d_{\overline{\lambda}}$.
\end{proof}

Now we are in a position to formulate criteria for $J^{-\frac{1}{2}}HJ^{-\frac{1}{2}}$ to be self-adjoint 
(for the classical case see~\cite{A},~\cite{Be},~\cite{S98}).
\begin{theorem}
The following statements are equivalent:
\begin{enumerate}
 \item[i)] The operator $J^{-\frac{1}{2}}HJ^{-\frac{1}{2}}$ is self-adjoint;
\item[ii)] The sequence ${\bf K}_j(\lambda)$ converges to a point for some $\lambda\in\dC_+\setminus\{z_k\}_{k=0}^{\infty}$;
\item[iii)] We have that 
\begin{equation}\label{limpoinser}
{\sum_{k=0}^{\infty}|\widehat{P}_k(\lambda)+\mathfrak{d}_{k-1}\widehat{P}_{k-1}(\lambda)|^2}=\infty
\end{equation}
for some $\lambda\in\dC_+\setminus\{z_k\}_{k=0}^{\infty}$.
\end{enumerate}
\end{theorem}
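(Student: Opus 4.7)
The plan is to combine the work already done: the three conditions are equivalent reformulations of a single underlying fact about the deficiency indices of $J^{-\frac{1}{2}}HJ^{-\frac{1}{2}}$. I will prove (ii)$\Leftrightarrow$(iii) and (i)$\Leftrightarrow$(iii); transitivity then gives the full chain.

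First, (ii)$\Leftrightarrow$(iii) is already contained in Theorem~\ref{limitpoint} together with the radius formula~\eqref{Ach9}: the nested sequence $\mathbf{K}_j(\lambda)$ shrinks to a point precisely when its radii go to zero, and by~\eqref{Ach9} these radii are inversely proportional to the partial sums $\sum_{k=0}^{j-1}|\widehat{P}_k(\lambda)+\mathfrak{d}_{k-1}\widehat{P}_{k-1}(\lambda)|^2$. So no new work is needed for this equivalence.

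For (i)$\Leftrightarrow$(iii), I would appeal directly to the analysis already carried out in the proof of Proposition~\ref{formalsymm}. That proof identified, for any $\lambda\in\dC_+$, the deficiency subspace $\cN_\lambda = \ker\bigl((J^{-\frac{1}{2}}HJ^{-\frac{1}{2}})^*-\overline{\lambda}\bigr)$: its nontrivial elements are the $\ell^2$ realizations of the formal series $h\sim\sum_k\overline{\widehat{P}_k(\lambda)}\,f_k$, and such a realization exists (i.e.\ $\dim\cN_\lambda=1$) if and only if $\sum_k|\widehat{P}_k(\lambda)+\mathfrak{d}_{k-1}\widehat{P}_{k-1}(\lambda)|^2<\infty$. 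Since Proposition~\ref{formalsymm} already restricts the deficiency indices to be either $(0,0)$ or $(1,1)$, self-adjointness of $J^{-\frac{1}{2}}HJ^{-\frac{1}{2}}$ is equivalent to $\dim\cN_\lambda=0$ for $\lambda\in\dC_+$, which is equivalent to divergence of the series in~\eqref{limpoinser}.

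The only subtle point, which I expect to be the main (though still minor) obstacle, is the "some $\lambda$" versus "every $\lambda$" distinction in (ii) and (iii). This is handled as follows: the deficiency index $d_\lambda$ is constant on each of $\dC_\pm$ by the standard von Neumann theory, and the equality of the radii of $K_n(\lambda)$ and $K_n(\overline{\lambda})$ noted at the end of the proof of Proposition~\ref{formalsymm} bridges the upper and lower half-planes. Hence the convergence or divergence of the sum in~\eqref{limpoinser} is genuinely independent of the specific $\lambda\in\dC_+\setminus\{z_k\}_{k=0}^{\infty}$ chosen, and the three conditions are aligned.
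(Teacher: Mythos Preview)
Your proposal is correct and follows essentially the same approach as the paper: the paper's proof simply points to Theorem~\ref{limitpoint} for (ii)$\Leftrightarrow$(iii) and to the analysis inside the proof of Proposition~\ref{formalsymm} for (i)$\Leftrightarrow$(iii), exactly as you do. Your additional paragraph on the ``some $\lambda$'' versus ``every $\lambda$'' issue is a welcome clarification; the paper defers that point to a remark immediately after the theorem.
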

\begin{proof}
The equivalence of ii) and iii) is established in Theorem~\ref{limitpoint}.
The equivalence of i) and iii)  is actually proved in the proof of Proposition~\ref{formalsymm} by showing that the defect vector~\eqref{l2series}
belongs to $\ell^2$ if and only if~\eqref{limpoinser} holds true. 
\end{proof}

\begin{remark}
It is well known that for symmetric operators the dimension of the defect space $\cN_{\lambda}$ remains
the same for all $\lambda\in\dC_+$ . Thus, if~\eqref{limpoinser} holds for some $\lambda_0\in\dC_+\setminus\{z_k\}_{k=0}^{\infty}$
then it holds for all $\lambda\in\dC_+\setminus\{z_k\}_{k=0}^{\infty}$. The same is true for the limit point case.
\end{remark}

We should emphasize that in our approach the operator $J^{-\frac{1}{2}}HJ^{-\frac{1}{2}}$
plays exactly the same role as the Jacobi matrix for a moment problem. We should also stress here 
that if the original measure has finite moments of all nonnegative orders and 
we have a collection of interpolation sequences $\{z_k^{(n)}\}_{k=0}^{\infty}$ such that for every $k\in\dZ_+$
\[
z_{k}^{(n)}\to\infty,\quad\text{as}\quad n\to\infty,
\]
then the corresponding matrices $J^{(n)}$ converge to the identity $I$, as $n\to\infty$, elementwise 
(see~\eqref{f_b} and~\eqref{f_a}). 
So, roughly speaking, in this case, the operator $(J^{(n)})^{-\frac{1}{2}}H^{(n)}(J^{(n)})^{-\frac{1}{2}}$ approaches
the classical Jacobi matrix (see also~\cite{BDZh}).

To complete this section, it should be remarked that, in recent years, a lot of attention has been paid to 
the study of orthogonal polynomials on the unit circle via the spectral theory of
CMV-matrices (see~\cite{simon} and references therein).  
Roughly speaking, orthogonal polynomials on the unit circle correspond to 
the multiple interpolation problem at $0$ and $\infty$ for the Schur class
(actually, there is only one interpolation point since $\infty$ is symmetric to $0$
with respect to the unit circle). The multiple interpolation at two points is, in some sense, the limiting case
of the case under consideration. 
Also note  that an operator approach to orthogonal rational functions  on the unit circle 
via CMV matrices can be found in~\cite{vel}.
It is also worth mentioning that Jacobi type normal matrices associated to
complex moment problems were introduced and studied in~\cite{BD05}, \cite{BD06}.

\section{The uniqueness of Nevanlinna-Pick problems}

In this section, by mimicking 
the proofs of~\cite[Theorem~2.10]{S98} and~\cite[Theorem~2.11]{S98}, 
we characterize the determinacy of the Nevanlinna-Pick problems in question 
in terms of the self-adjointness of $J^{-\frac{1}{2}}HJ^{-\frac{1}{2}}$.

Let $\f\in\bR_0$ and let a sequence of distinct numbers 
$\{z_k\}_{k=0}^{\infty}\subset\dC_+$ be given.
According to~\eqref{f_w} and~\eqref{f_b}, the pencil $H-\lambda J$ in question is uniquely
determined by the sequences $\{z_k\}_{k=0}^{\infty}$ and $w_k:=\f(z_k)$, $k\in\dZ_+$.
So, as we already mentioned, the following question naturally arises.

\noindent{\bf Nevanlinna-Pick problem.} 
Is the function $\f\in\bR_0$ satisfying the interpolation relation
\begin{equation}\label{NP_ex}
\f(z_k)=w_k,\quad k\in\dZ_+
\end{equation}
uniquely determined by the data $\{z_k\}_{k=0}^{\infty}$, $\{w_k\}_{k=0}^{\infty}$?

More details about Nevanlinna-Pick problems can be found in~\cite{A},~\cite{Garnett},~\cite{KN}.
\begin{remark}
Recall that an $\bR$-function is a function which is holomorphic in the open upper
 half plane $\dC_+$ and maps $\dC_+$ onto $\dC_+$.
For convenience, it is supposed that every $\f\in\bR$ is extended 
to the lower half plane $\dC_-$ by the symmetry relation $\f(\lambda)=\overline{\f(\overline{\lambda})}$,
$\lambda\in\dC_-$. Clearly, $\bR_0$ is a subclass of $\bR$.
In fact, the condition $\f\in\bR_0$ means that $\f$ is an $\bR$-function and satisfies 
the following tangential interpolation condition
\begin{equation}\label{Ad_IP}
\f(\lambda)=-\frac{1}{\lambda}+
o\left(\frac{1}{\lambda}\right),
\quad\lambda\widehat{\rightarrow }\infty.
\end{equation}
Roughly speaking, \eqref{Ad_IP} can be interpreted as the interpolation conditions
$\f(\infty)=0$, $\f'(\infty)=-1$. So, the Nevanlinna-Pick problem in question is a sublass
of Nevanlinna-Pick problems in $\bR$.
\end{remark}

Before answering the question of the Nevanlinna-Pick problem we will prove the following
auxiliary statement. 

\begin{lemma}\label{lemma1}
We have that for $j\in\dZ_+$
\begin{equation}\label{h1}
\begin{split}
e_0&=(H-\overline{z}_jJ)
(\xi_{[0,j]}(\overline{z}_j)+m_{[0,j]}(\overline{z}_j)\pi_{[0,j]}(\overline{z}_j))=\\
&=(H_{[0,j]}-\overline{z}_jJ_{[0,j]})
(\xi_{[0,j]}(\overline{z}_j)+m_{[0,j]}(\overline{z}_j)\pi_{[0,j]}(\overline{z}_j))
\end{split}
\end{equation}
Moreover, if $J^{-\frac{1}{2}}HJ^{-\frac{1}{2}}$ is self-adjoint in $\ell^2$ then the systems 
$\left\{(J^{-\frac{1}{2}}HJ^{-\frac{1}{2}}-\overline{z}_j)^{-1}J^{-\frac{1}{2}}e_0\right\}_{j=0}^{\infty}$ and 
$\{J^{\frac{1}{2}}e_j\}_{j=0}^{\infty}$
are equivalent, that is,
\[
span\{(J^{-\frac{1}{2}}HJ^{-\frac{1}{2}}-\overline{z}_0)^{-1},\dots,(J^{-\frac{1}{2}}HJ^{-\frac{1}{2}}-\overline{z}_k)^{-1}e_0\}=
span\{J^{\frac{1}{2}}e_0,\dots,J^{\frac{1}{2}}e_k\}
\]
for every $k\in\dZ_+$.
\end{lemma}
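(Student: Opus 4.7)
The plan for the first identity is purely algebraic. I would plug $\lambda = \overline{z}_j$ into the matrix recurrences~\eqref{forCD1} and~\eqref{forCD2}. Since $\mathfrak{b}_j = z_j b_j$ and $\mathfrak{d}_j = b_j$, one has $\overline{\mathfrak{b}}_j - \overline{z}_j\mathfrak{d}_j = 0$, which kills the $e_{j+1}$ terms in both relations, while $\mathfrak{b}_j - \overline{z}_j\mathfrak{d}_j = (z_j - \overline{z}_j)b_j \neq 0$. Applying $(H - \overline{z}_j J)$ to $\xi_{[0,j]}(\overline{z}_j) + m_{[0,j]}(\overline{z}_j)\pi_{[0,j]}(\overline{z}_j)$ therefore leaves only the $e_0$ term of~\eqref{forCD2} together with an $e_j$-contribution proportional to $\widehat{Q}_{j+1}(\overline{z}_j) + m_{[0,j]}(\overline{z}_j)\widehat{P}_{j+1}(\overline{z}_j)$. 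By~\eqref{wTransform} the ratio $\widehat{Q}_{j+1}/\widehat{P}_{j+1}$ coincides with $Q_{j+1}/P_{j+1}$, which by~\eqref{mformulas} equals $-m_{[0,j]}$, so this $e_j$-contribution vanishes and the first equality of~\eqref{h1} follows. For the second equality, observe that $\xi_{[0,j]}(\overline{z}_j)$ and $\pi_{[0,j]}(\overline{z}_j)$ are supported on coordinates $0,\dots,j$; the only place where $(H - \overline{z}_j J)$ could differ from its truncation $(H_{[0,j]} - \overline{z}_j J_{[0,j]})$ on such a vector is the $e_{j+1}$-component of the output, which we have just shown to vanish.

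For the second claim, set $A := J^{-\frac{1}{2}}HJ^{-\frac{1}{2}}$ and $v_j := \xi_{[0,j]}(\overline{z}_j) + m_{[0,j]}(\overline{z}_j)\pi_{[0,j]}(\overline{z}_j)$, a finite linear combination of $e_0,\dots,e_j$. Then $J^{\frac{1}{2}}v_j \in \span\{f_0,\dots,f_j\} \subset \cF$, and since $\cF$ is a core for $A$, together with~\eqref{h1},
\[
(A - \overline{z}_j) J^{\frac{1}{2}} v_j = J^{-\frac{1}{2}}(H - \overline{z}_j J) v_j = J^{-\frac{1}{2}} e_0.
\]
Self-adjointness of $A$ and $\overline{z}_j \notin \dR$ guarantee that $(A - \overline{z}_j)^{-1}$ is everywhere defined, whence $(A - \overline{z}_j)^{-1} J^{-\frac{1}{2}} e_0 = J^{\frac{1}{2}} v_j \in \span\{f_0,\dots,f_j\}$. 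Summing over $j = 0,\dots,k$ yields the inclusion $\subseteq$ of the stated spans.

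For the reverse inclusion it suffices to verify that the $k+1$ vectors $v_0,\dots,v_k$ are linearly independent, and I would do this by computing the $e_j$-component of $v_j$ explicitly. Using~\eqref{wTransform} and~\eqref{mformulas} it equals $\bigl(Q_j(\overline{z}_j)P_{j+1}(\overline{z}_j) - Q_{j+1}(\overline{z}_j)P_j(\overline{z}_j)\bigr)$ divided by nonvanishing factors; the Liouville-Ostrogradsky formula~\eqref{Ostrogr2} evaluated at $\lambda = \overline{z}_j$ reduces the numerator to $-\prod_{k=0}^{j-1} b_k^2(\overline{z}_j - z_k)(\overline{z}_j - \overline{z}_k)$, which is nonzero because the $z_k$ are distinct and lie in $\dC_+$, while the denominator contains $P_{j+1}(\overline{z}_j) \neq 0$ (otherwise the value $m_{[0,j]}(\overline{z}_j) = \overline{w}_j$ would be undefined, contradicting~\eqref{IP}). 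Hence $v_0,\dots,v_k$ form a triangular system in the basis $e_0,\dots,e_k$ with nonzero diagonal entries, so they are independent; injectivity of $J^{\frac{1}{2}}$ preserves independence, and the images $J^{\frac{1}{2}}v_0,\dots,J^{\frac{1}{2}}v_k$ then span $\span\{f_0,\dots,f_k\}$, completing the proof.

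The delicate technical point will be the operator-theoretic justification in the second paragraph: one must argue that the algebraic identity~\eqref{h1} genuinely lifts to the operator relation $(A - \overline{z}_j)^{-1} J^{-\frac{1}{2}} e_0 = J^{\frac{1}{2}} v_j$ in $\ell^2$, rather than merely on formal symbols. This hinges on $J^{\frac{1}{2}}v_j$ lying in the domain of the closure of the minimal symmetric operator $A$, which follows from $\cF$ being a core, together with the factorization $A - \overline{z}_j = J^{-\frac{1}{2}}(H - \overline{z}_j J)J^{-\frac{1}{2}}$ holding strongly when applied to such core vectors.
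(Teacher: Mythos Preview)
Your argument is correct and follows essentially the same route as the paper: both use the vanishing $\overline{\mathfrak{b}}_j-\overline{z}_j\mathfrak{d}_j=0$ in~\eqref{forCD1}--\eqref{forCD2} together with $m_{[0,j]}=-\widehat{Q}_{j+1}/\widehat{P}_{j+1}$ to establish~\eqref{h1}, and then derive $(A-\overline{z}_j)^{-1}J^{-1/2}e_0=J^{1/2}v_j$ and invoke the Liouville--Ostrogradsky formula to see that the $e_j$-component $\widehat{Q}_j(\overline{z}_j)+m_{[0,j]}(\overline{z}_j)\widehat{P}_j(\overline{z}_j)$ is nonzero. Your write-up is somewhat more explicit than the paper's about the triangular structure giving linear independence and about the domain issue for $A$, but these are elaborations of the same proof rather than a different strategy.
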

\begin{proof}
Notice that $\overline{\mathfrak{b}}_j-\overline{z}_j{\mathfrak{d}}_j=0$. Then 
it follows from~\eqref{forCD1}, and~\eqref{forCD2} that
\begin{equation}\label{h2}
\begin{split}
(H-\overline{z}_j J)\pi_{[0,j]}(\overline{z}_j)&=
(H_{[0,j]}-\overline{z}_j J_{[0,j]})\pi_{[0,j]}(\overline{z}_j)
=-(\mathfrak{b}_j-\overline{z}_j\mathfrak{d}_j)\widehat{P}_{j+1}(\overline{z}_j)e_j,
\\
(H-\overline{z}_j J)\xi_{[0,j]}(\overline{z}_j)&=
(H_{[0,j]}-\overline{z}_j J_{[0,j]})\xi_{[0,j]}(\overline{z}_j)
=-(\mathfrak{b}_j-\overline{z}_j\mathfrak{d}_j)\widehat{Q}_{j+1}(\overline{z}_j)e_j+e_0.
\end{split}
\end{equation}
Now, \eqref{h1} is immediate from~\eqref{h2} by taking into account 
\[
m_{[0,j]}(\overline{z}_j)=-\frac{Q_{j+1}(\overline{z}_j)}{P_{j+1}(\overline{z}_j)}
=-\frac{\widehat{Q}_{j+1}(\overline{z}_j)}{\widehat{P}_{j+1}(\overline{z}_j)}.
\]
If $J^{-\frac{1}{2}}HJ^{-\frac{1}{2}}$ is a self-adjoint operator in $\ell^2$ then~\eqref{h1} implies that 
\begin{equation}\label{h1_explicit}
(J^{-\frac{1}{2}}HJ^{-\frac{1}{2}}-\overline{z}_j)^{-1}J^{-\frac{1}{2}}e_0=
J^{\frac{1}{2}}(\xi_{[0,j]}(\overline{z}_j)+m_{[0,j]}(\overline{z}_j)\pi_{[0,j]}(\overline{z}_j)).
\end{equation}
Now, the equivalence follows from~\eqref{h1_explicit} for $j=0,\dots,k$
and the fact that 
$\widehat{Q}_j(\overline{z}_j)+m_{[0,j]}(\overline{z}_j)\widehat{P}_j(\overline{z}_j)\ne 0$ 
for $j=0,\dots, k$. The latter fact immediately follows from~\eqref{wTransform}, \eqref{mformulas},
and the Liouville-Ostrogradsky formula~\eqref{Ostrogr2}.
\end{proof}

\begin{proposition}\label{self}
If the operator $J^{-\frac{1}{2}}HJ^{-\frac{1}{2}}$ is self-adjoint in $\ell^2$ then the corresponding
Nevanlinna-Pick problem~\eqref{NP_ex} has the unique solution
\[
\f(\lambda)=m(\lambda):=((J^{-\frac{1}{2}}HJ^{-\frac{1}{2}}-\lambda I)^{-1}J^{-\frac{1}{2}}e_0,J^{-\frac{1}{2}}e_0).
\]
\end{proposition}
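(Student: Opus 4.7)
The plan is to split the proof into showing that $m$ solves the Nevanlinna-Pick problem (existence) and then deriving uniqueness from the Weyl-disc analysis of Section~5 combined with the self-adjointness hypothesis.

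For existence, I would first verify the interpolation relations. Taking the inner product of identity~\eqref{h1_explicit} with $J^{-\frac{1}{2}}e_0$, and using the initial values $\widehat{P}_0(\lambda)=1$, $\widehat{Q}_0(\lambda)=0$, yields $m(\overline{z}_j)=m_{[0,j]}(\overline{z}_j)$. Since the recurrence~\eqref{rec_rel} has real coefficients, $P_{j+1}$ and $Q_{j+1}$ are real polynomials, and combining with $m_{[0,j]}(z_j)=w_j$ (from~\eqref{IP} and Proposition~\ref{interlace}) gives $m(\overline{z}_j)=\overline{w_j}$; the symmetry $m(\lambda)=\overline{m(\overline{\lambda})}$ inherent to a resolvent matrix element of a self-adjoint operator then yields $m(z_j)=w_j$. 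To see $m\in\bR_0$, the spectral theorem for $J^{-\frac{1}{2}}HJ^{-\frac{1}{2}}$ gives $m(\lambda)=\int d\mu(t)/(t-\lambda)$ with $\mu$ the spectral measure at $J^{-\frac{1}{2}}e_0$, of total mass $(J^{-1}e_0,e_0)$; the upper bound $(J^{-1}e_0,e_0)\le 1$ is exactly~\eqref{h_p61}, and the matching lower bound comes from~\eqref{h_con_d} together with weak convergence of the finite-section spectral measures $\nu_n\to\nu$ and the uniform control of their mass near~$0$ provided by~\eqref{dom_inq_tr}. Hence $\mu$ is a probability measure and $m\in\bR_0$.

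For uniqueness, let $\varphi\in\bR_0$ be any solution. Iterating the modified Schur transformation of Proposition~\ref{schur_step} on $\varphi$ produces Schur iterates $\varphi_{n+1}\in\bR_0$ built from the \emph{same} coefficients $a_j^{(1)},a_j^{(2)},b_j$ as those defining $H-\lambda J$, since these depend only on the interpolation data $\{z_k,w_k\}$. Standard continued-fraction algebra then yields the parametrization
\[
\varphi(\lambda)=\omega_{n+1}\bigl(\lambda,\,-b_n^2(\lambda-z_n)(\lambda-\overline{z}_n)\varphi_{n+1}(\lambda)\bigr)
\]
in the notation of~\eqref{Ach7}. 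Substituting $\omega=\varphi(\lambda)$ into identity~\eqref{forKW} recasts the Weyl-ball inequality~\eqref{WeylBall} as a single sign condition relating $\varphi_{n+1}\in\bR_0$ to the geometry of the factor $(\lambda-z_n)(\lambda-\overline{z}_n)$; verifying this condition and deducing $\varphi(\lambda)\in{\bf K}_{n+1}(\lambda)$ for every $n$ and every $\lambda\in\dC_+\setminus\{z_k\}$ is the main technical step and the hardest part of the argument, and is the analogue of the classical Weyl-circles argument in the moment-problem case. Granting it, $\varphi(\lambda)\in\bigcap_n{\bf K}_n(\lambda)$; by Theorem~\ref{limitpoint} and the self-adjointness hypothesis this intersection reduces to a single point. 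Since the same Weyl-disc argument applied to $m$ itself places $m(\lambda)\in\bigcap_n{\bf K}_n(\lambda)$ as well, I conclude $\varphi(\lambda)=m(\lambda)$ on $\dC_+\setminus\{z_k\}_{k=0}^{\infty}$, and by analyticity on all of $\dC\setminus\dR$.
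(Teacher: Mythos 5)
Your uniqueness argument departs from the paper's and, as written, has a genuine hole precisely at its load-bearing step: the inclusion $\f(\lambda)\in{\bf K}_{n+1}(\lambda)$ for an arbitrary solution $\f$ of~\eqref{NP_ex} is asserted and then explicitly ``granted''. Nothing in Sections~5--6 supplies it: the circles are defined through the real-parameter family~\eqref{Ach7}, and~\eqref{forKW} is an identity for a free $\omega\in\dC_+$, not a statement about solutions. With your substitution the parameter $\tau=-b_n^2(\lambda-z_n)(\lambda-\overline{z}_n)\f_{n+1}(\lambda)$ is not real, and the sign you would need, essentially $\Im\bigl[(\lambda-z_n)(\lambda-\overline{z}_n)\f_{n+1}(\lambda)\bigr]$ having a definite sign in $\dC_+$, is \emph{not} automatic from $\f_{n+1}\in\bR_0$ (unlike the classical moment-problem case, where the corresponding factor is the positive constant $b_n^2$); this is exactly why the nested-disk route is delicate here. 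You would also need to exclude termination of the Schur algorithm for a competing solution and to justify that $m$ itself lies in $\bigcap_n{\bf K}_n(\lambda)$, which presupposes both the granted step and $m\in\bR_0$. The paper avoids all of this: following Simon's determinacy argument, it uses Lemma~\ref{lemma1} to identify $\span\{(J^{-\frac12}HJ^{-\frac12}-\overline{z}_k)^{-1}J^{-\frac12}e_0\}_{k\le n}$ with $\span\{J^{\frac12}e_k\}_{k\le n}$, which is a core, chooses $r_n(\lambda)$ there with $(J^{-\frac12}HJ^{-\frac12}-\lambda)r_n\to J^{-\frac12}e_0$, and transports the resulting $L^2$-smallness of $(t-\lambda)\sum_k c_k/(t-\overline{z}_k)-1$ from the spectral measure to the measure of any solution (the relevant integrals being fixed by the interpolation data), so that $\f_\rho(\lambda)$ is independent of $\rho$. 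Your plan could in principle be completed, but the granted inclusion is the hard part and must be proved, not assumed.

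The normalization step is also unsound as cited. From~\eqref{dom_inq_tr} and Fatou one only gets~\eqref{h_p61}, i.e.\ $\Vert J^{-\frac12}e_0\Vert^2\le 1$; the bound $\int t^{-1}d\nu_n\le 1$ is an upper bound and provides no uniform integrability of $1/t$ near $t=0$ along $\nu_n$, so the claimed ``matching lower bound'' does not follow from~\eqref{h_con_d} plus weak convergence (mass of $t^{-1}d\nu_n$ concentrated near $0$ may vanish in the limit). Hence your verification that the spectral measure of $J^{-\frac12}HJ^{-\frac12}$ at $J^{-\frac12}e_0$ is a probability measure, i.e.\ that $m\in\bR_0$, is incomplete; note this normalization also enters implicitly whenever integrals against the two measures are identified, since the constant term of the partial-fraction expansion is weighted by the total mass. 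A correct route is via Remark~\ref{dom_J_h}: $d\nu(t)=c\,t\,d\nu^*(t)$ with $\nu^*$ the measure of $J_1=LL^*$ gives $c=1$ because $(J_1e_0,e_0)=\Vert L^*e_0\Vert^2=1$, whence $\Vert J^{-\frac12}e_0\Vert^2=\int t^{-1}d\nu(t)=\nu^*(\dR)=1$.
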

\begin{proof}
Clearly, for every $\lambda\in\dC_+\cup\dC_-$
there exists a sequence
 $r_n(\lambda)\in\span\{J^{\frac{1}{2}}e_0,\dots,J^{\frac{1}{2}}e_n\}\subset\dom(J^{\frac{1}{2}}HJ^{\frac{1}{2}})$ 
such that
\begin{equation}\label{help_for_conv}
\Vert(J^{-\frac{1}{2}}HJ^{-\frac{1}{2}}-\lambda)r_n(\lambda)-J^{-\frac{1}{2}}e_0\Vert\to 0,\quad n\to\infty.
\end{equation}
It follows from Lemma~\ref{lemma1} that
\begin{equation}\label{def_r_n}
r_n(\lambda)=\sum_{k=0}^{n}{c_k(\lambda)}(J^{-\frac{1}{2}}HJ^{-\frac{1}{2}}-\overline{z}_k)^{-1}J^{\frac{1}{2}}e_0.
\end{equation}
Further, let $HJ^{-1}=\int_{\dR}tdE_t$ be a spectral decomposition of $J^{-\frac{1}{2}}HJ^{-\frac{1}{2}}$. Then the
function 
\[
m(\lambda)=\int_{\dR}\frac{d(E_te_0,e_0)}{t-\lambda}=
((J^{-\frac{1}{2}}HJ^{-\frac{1}{2}}-\lambda )^{-1}J^{-\frac{1}{2}}e_0,J^{-\frac{1}{2}}e_0).
\]
is a solution of the Nevanlinna-Pick problem~\eqref{NP_ex}. Really, according to~\eqref{h1}
we have
\[
m(\overline{z}_j)=((J^{-\frac{1}{2}}HJ^{-\frac{1}{2}}-\overline{z}_j )^{-1}J^{-\frac{1}{2}}e_0,J^{-\frac{1}{2}}e_0)_{\ell^2}
=m_{[0,j]}(\overline{z}_j).
\]
Further, due to~\eqref{IP} and~\eqref{mformulas} one easily gets that 
$m(z_j)=w_j$ for $j\in\dZ_+$. Suppose that there is another solution 
$\f_{\rho}(\lambda)=\int_{\dR}\frac{d\rho(t)}{t-\lambda}$. Then we have
\begin{multline*}
\int_{\dR}\left|(t-\lambda)\sum_{k=0}^{n}\frac{c_k(\lambda)}{t-\overline{z}_j}-1\right|^2d\rho(t)=\\
=\int_{\dR}\left|(t-\lambda)\sum_{k=0}^{n}\frac{c_k(\lambda)}{t-\overline{z}_j}-1\right|^2
d(E_te_0,e_0)=\\
=\Vert (J^{-\frac{1}{2}}HJ^{-\frac{1}{2}}-\lambda)\sum_{k=0}^n c_k(\lambda)
(J^{-\frac{1}{2}}HJ^{-\frac{1}{2}}-\overline{z}_j)^{-1}J^{-\frac{1}{2}}e_0-J^{-\frac{1}{2}}e_0\Vert
\to 0,
\end{multline*}
as $n\to\infty$. Now, $1/(t-\lambda)$ is bounded for $t\in\dR$ since
$\lambda\in\dC_+\cup\dC_-$. Thus
\[
\int_{\dR}\left|\sum_{k=0}^{n}\frac{c_k(\lambda)}{t-\overline{z}_k}-\frac{1}{t-\lambda}\right|^2
d\rho(t)\to 0, \quad n\to\infty.
\]
Finally, it follows that
\[
\f_{\rho}(\lambda)=
\lim_{n\to\infty}\int_{\dR}\sum_{k=0}^{n}\frac{c_k(\lambda)}{t-\overline{z}_k}d\rho(t)
\]
is independent of $\rho$. Since $\f_{\rho}$ determines $\rho$ (see for instance~\cite[Chapter~III]{A}), 
all $\rho$'s must be the same.
\end{proof}

\begin{proposition}\label{symmetric}
If the operator $J^{-\frac{1}{2}}HJ^{-\frac{1}{2}}$ is not self-adjoint in $\ell^2$ then the corresponding
Nevanlinna-Pick problem~\eqref{NP_ex} has an infinite number of solutions.
\end{proposition}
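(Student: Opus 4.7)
The plan is to exploit the Weyl-disk apparatus of Section~5. By Proposition~\ref{formalsymm}, if $J^{-\frac{1}{2}}HJ^{-\frac{1}{2}}$ is not self-adjoint, then its deficiency indices are $(1,1)$, so by the theorem preceding this proposition together with Theorem~\ref{limitpoint}, for every $\lambda_0\in\dC_+\setminus\{z_k\}_{k=0}^{\infty}$ the nested sequence of closed disks ${\bf K}_j(\lambda_0)$ shrinks to a limit disk ${\bf K}_\infty(\lambda_0):=\bigcap_{j\in\dN}{\bf K}_j(\lambda_0)$ whose radius is strictly positive.

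Fix such a $\lambda_0$ and pick two distinct points $\omega^{(1)},\omega^{(2)}\in{\bf K}_\infty(\lambda_0)$. For each $\nu\in\{1,2\}$ and each $j\in\dN$, since $\omega^{(\nu)}\in{\bf K}_j(\lambda_0)$ and the boundary $K_j(\lambda_0)$ is traced out by the parameter $\tau$ via~\eqref{Ach7}, there exists a parameter $\tau_j^{(\nu)}\in\dR\cup\{\infty\}$ such that $\omega_j(\lambda_0,\tau_j^{(\nu)})\to\omega^{(\nu)}$ as $j\to\infty$. Each rational function $\omega_j(\cdot,\tau_j^{(\nu)})=-(Q_j-\tau_j^{(\nu)}Q_{j-1})/(P_j-\tau_j^{(\nu)}P_{j-1})$ belongs to $\bR_0$ and, because it is a convex-type combination of two consecutive multipoint Pad\'e approximants, inherits from~\eqref{IP} the interpolation values $\omega_j(z_k,\tau_j^{(\nu)})=w_k$ for $k=0,\dots,j-2$.

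Next I would invoke the precompactness of $\bR_0$ in the topology of locally uniform convergence on $\dC\setminus\dR$ (Proposition~\ref{prec_R1}, used earlier in the proof of the one-to-one correspondence theorem) to extract from $\{\omega_j(\cdot,\tau_j^{(\nu)})\}_{j}$ a subsequence converging locally uniformly on $\dC\setminus\dR$ to a function $\f^{(\nu)}\in\bR_0$. Since each interpolation condition $\f(z_k)=w_k$ is stable under locally uniform convergence at the point $z_k$, one obtains $\f^{(\nu)}(z_k)=w_k$ for every $k\in\dZ_+$ and, in particular, $\f^{(\nu)}(\lambda_0)=\omega^{(\nu)}$. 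Therefore $\f^{(1)}\ne\f^{(2)}$, and in fact varying $\omega\in{\bf K}_\infty(\lambda_0)$ yields a continuum of solutions.

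The main obstacle is the precompactness step: one must ensure that the limit stays in $\bR_0$ (a probability measure) rather than drifting into the larger Nevanlinna class $\bR$, i.e.~that no mass escapes to infinity in the limit. This is precisely what the normalization $\mathfrak{c}_j=1+\mathfrak{d}_j^2$ (together with the tangential condition $\lambda\widehat{\to}\infty$ built into the step~\eqref{flt_step}) is designed to enforce, and it is reflected analytically in the inequality~\eqref{WeylBall} which forces the limit disk to lie entirely in $\dC_+$ with the correct growth at infinity.
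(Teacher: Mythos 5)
Your route --- limit-circle geometry plus a normal-families argument --- is genuinely different from the paper's proof, which takes two distinct self-adjoint extensions $H_1,H_2$ of $J^{-\frac{1}{2}}HJ^{-\frac{1}{2}}$, checks via Lemma~\ref{lemma1} that $((H_k-\lambda)^{-1}J^{-\frac{1}{2}}e_0,J^{-\frac{1}{2}}e_0)$ solve~\eqref{NP_ex}, and distinguishes them through the von Neumann formulas and the recurrence~\eqref{r_rl}, with no limiting procedure at all. However, your argument has a genuine gap at exactly the point you flag, and the fix you suggest does not work. First, the functions $\omega_j(\cdot,\tau)$, $\tau\in\dR$, are not in $\bR_0$: perturbing the last diagonal entry of $H_{[0,j-1]}$ by $\tau$ and expanding the determinants along the last row gives $\omega_j(\lambda,\tau)=\bigl((H^{\tau}_{[0,j-1]}-\lambda J_{[0,j-1]})^{-1}e_0,e_0\bigr)$, a Herglotz function whose total mass is $(J_{[0,j-1]}^{-1}e_0,e_0)\le 1$, in general $<1$, and only tending to $1$ by~\eqref{h_con_d} (compare Proposition~\ref{interlace}, where it is $\theta_n m_{[0,n]}$, not $m_{[0,n]}$ itself, that belongs to $\bR_0$). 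Second, and this is the crux: locally uniform convergence on $\dC\setminus\dR$ does not prevent mass from escaping to infinity; a sequence of Cauchy transforms of measures with masses $c_j\to 1$ can converge to the transform of a measure of mass $<1$ (take $\mu_j=(1-\varepsilon)\delta_0+\varepsilon\delta_j$). So the limits $\f^{(\nu)}$ you extract are a priori only sub-probability interpolants, whereas~\eqref{NP_ex} asks for solutions in $\bR_0$. Your appeal to~\eqref{WeylBall} cannot close this: that inequality constrains the value $\omega$ at one fixed $\lambda$ and says nothing about the behaviour of the limit function as $\lambda\widehat{\rightarrow}\infty$, which is what mass one means. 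Some genuine extra input is needed, e.g.\ tightness of the measures of $\omega_j(\cdot,\tau_j^{(\nu)})$, or an identity pinning the mass of the limit; note that even in the paper's proof the mass of $\f_k$ is $\Vert J^{-\frac{1}{2}}e_0\Vert^2$, which equals $1$ because $(J_{[0,n]}^{-1}e_0,e_0)$ increases to this quantity and tends to $1$ by~\eqref{h_con_d}.

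There is also a smaller slip: formula~\eqref{Ach7} parametrizes only the boundary circle $K_j(\lambda_0)$, and points in the interior of the limit disk stay at a positive distance from every $K_j(\lambda_0)$, so they are not limits of points $\omega_j(\lambda_0,\tau_j)$. Choose your two points on the limit circle itself (possible since the radius is positive); once two distinct $\bR_0$-solutions are in hand, the continuum follows from convex combinations, exactly as at the end of the paper's proof.
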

\begin{proof}
Since the deficiency indices of $J^{-\frac{1}{2}}HJ^{-\frac{1}{2}}$
are equal it has self-adjoint extensions in $\ell^2$. Let $H_1$ and $H_2$ be two different self-adjoint extensions of 
$J^{-\frac{1}{2}}HJ^{-\frac{1}{2}}$ in $\ell^2$. Then the following two functions
\[ 
\f_1(\lambda)=((H_1-\lambda)^{-1}J^{-\frac{1}{2}}e_0,J^{-\frac{1}{2}}e_0), \quad
\f_2(\lambda)=((H_2-\lambda)^{-1}J^{-\frac{1}{2}}e_0,J^{-\frac{1}{2}}e_0)
\]
are solutions of~\eqref{NP_ex}. Really, according to Lemma~\ref{lemma1} we have
\[
\f_k(\overline{z}_j)=((H_k-\overline{z}_j)^{-1}J^{-\frac{1}{2}}e_0,J^{-\frac{1}{2}}e_0)=
((H_{[0,j]}-\overline{z}_j J_{[0,j]})^{-1}e_0,e_0)=\overline{w}_j
\]
for every $j\in\dZ_+$ and $k=1,2$. Since $\f_k\in\bR_0$, one also has
$\f_k(z_j)=w_j$.

Further, let $\lambda\in\dC_+\setminus\{z_j\}_{j=0}^{\infty}$.
Note, that $g_0=J^{-\frac{1}{2}}e_0\not\in\ran(J^{-\frac{1}{2}}HJ^{-\frac{1}{2}}-\lambda)$. To see this, suppose the contrary
that there exists $x\in\dom(J^{-\frac{1}{2}}HJ^{-\frac{1}{2}}-\lambda)$ such that 
$g_0=(J^{-\frac{1}{2}}HJ^{-\frac{1}{2}}-\lambda)x$
and that $((J^{-\frac{1}{2}}HJ^{-\frac{1}{2}})^*-\overline{\lambda})y=0$. Then 
\[
(g_0,y)=((J^{-\frac{1}{2}}HJ^{-\frac{1}{2}}-\lambda)x,y)
=(x,((J^{-\frac{1}{2}}HJ^{-\frac{1}{2}})^*-\overline{\lambda})y)=0.
\]
We thus see that $(g_0,y)=0$ and $((J^{-\frac{1}{2}}HJ^{-\frac{1}{2}})^*-\overline{\lambda})y=0$.
As a consequence, the coefficients $\widehat{u}_k=(g_k,y)$ of the vector $y\sim\sum_{k=0}^{\infty}\widehat{u}_kf_k$ solve~\eqref{r_rl}
with the initial conditions $\widehat{u}_{-1}=\widehat{u}_0=0$. Therefore, $y=0$, that is, 
$J^{-\frac{1}{2}}HJ^{-\frac{1}{2}}$ is self-adjoint in $\ell^2$.
By hypothesis, this is false, so $J^{-\frac{1}{2}}e_0\not\in\ran(J^{-\frac{1}{2}}HJ^{-\frac{1}{2}}-\lambda)$.
Thus $(H_1-\lambda)^{-1}J^{-\frac{1}{2}}e_0$ and $(H_2-\lambda)^{-1}J^{-\frac{1}{2}}e_0$ are in
$\dom((J^{-\frac{1}{2}}HJ^{-\frac{1}{2}})^*)\setminus\dom(J^{-\frac{1}{2}}HJ^{-\frac{1}{2}})$.
So, we have $(H_1-\lambda)^{-1}J^{-\frac{1}{2}}e_0\ne(H_2-\lambda)^{-1}J^{-\frac{1}{2}}e_0$ because otherwise, according to
the fact that $J^{-\frac{1}{2}}HJ^{-\frac{1}{2}}$ has deficiency indices (1,1) and the von Neumann formulas we would have
$H_1=H_2$.

Let $\eta=(H_1-\lambda)^{-1}J^{-\frac{1}{2}}e_0-(H_2-\lambda)^{-1}J^{-\frac{1}{2}}e_0$. Then one has
$((J^{-\frac{1}{2}}HJ^{-\frac{1}{2}})^*-\lambda)\eta=0$ and, so, 
the coefficients $\widehat{\eta}_k=(g_k,\eta)$ of the vector $\eta\sim\sum_{k=0}^{\infty}\widehat{\eta}_kf_k$
give a solution of~\eqref{r_rl} with the initial conditions 
\[
\widehat{\eta}_{-1}=0,\quad \widehat{\eta}_0=(g_0,\eta).
\]
Since $\eta\ne 0$ we get $(g_0,\eta)\ne 0$.
As a consequence, we have $\f_1\not\equiv\f_2$. To complete the proof it remains to observe
that the function
\[
\f_{\alpha}(\lambda)=\alpha\f_1(\lambda)+(1-\alpha)\f_2(\lambda)
\]
is also a solution of~\eqref{NP_ex} for every $\alpha\in(0,1)$.
\end{proof}
\begin{remark}
It follows from the proof that every self-adjoint extension of the symmetric operator $J^{-\frac{1}{2}}HJ^{-\frac{1}{2}}$ 
generates a solution of the corresponding Nevanlinna-Pick problem. Moreover, by using the standard technique
of theory of extensions of symmetric operators (see~\cite{A},~\cite{DM},~\cite{Mal}), one can get 
the description of all solutions of the Nevanlinna-Pick problem and it will be done elsewhere.
The description of all solutions can be found, for instance, in~\cite{Garnett}.
\end{remark}

The following theorem immediately follows from Propositions~\ref{self} and~\ref{symmetric}.
\begin{theorem}\label{NP_determinate}
The Nevanlinna-Pick problem~\eqref{NP_ex} has a unique solution iff the corresponding operator
$J^{-\frac{1}{2}}HJ^{-\frac{1}{2}}$ is self-adjoint in $\ell^2$.
\end{theorem}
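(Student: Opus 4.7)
The plan is to read Theorem~\ref{NP_determinate} as the logical conjunction of Propositions~\ref{self} and~\ref{symmetric}: one proposition supplies each implication of the equivalence, and nothing further is required. Since these propositions have already been proved, no new construction is needed; the proof is essentially a two-line assembly once one observes that self-adjointness versus non-self-adjointness is a complete dichotomy for the operator at hand.

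For the ``if'' direction I would invoke Proposition~\ref{self} directly. Assuming $J^{-\frac{1}{2}}HJ^{-\frac{1}{2}}$ is self-adjoint in $\ell^2$, that proposition exhibits the canonical candidate $m(\lambda)=((J^{-\frac{1}{2}}HJ^{-\frac{1}{2}}-\lambda I)^{-1}J^{-\frac{1}{2}}e_0,J^{-\frac{1}{2}}e_0)$, verifies the interpolation conditions $m(z_j)=w_j$ through Lemma~\ref{lemma1} (which identifies $m(\overline{z}_j)$ with the truncated $m$-function $m_{[0,j]}(\overline{z}_j)$), and then uses density of $\{(J^{-\frac{1}{2}}HJ^{-\frac{1}{2}}-\overline{z}_k)^{-1}J^{-\frac{1}{2}}e_0\}_{k\ge 0}$ in $\ell^2$ to rule out any second solution. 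So uniqueness is obtained verbatim from Proposition~\ref{self}.

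For the ``only if'' direction I would argue by contraposition via Proposition~\ref{symmetric}. Suppose $J^{-\frac{1}{2}}HJ^{-\frac{1}{2}}$ is not self-adjoint in $\ell^2$. By Proposition~\ref{formalsymm} its deficiency indices are then forced to be $(1,1)$, so it admits distinct self-adjoint extensions $H_1\ne H_2$ in $\ell^2$. Proposition~\ref{symmetric} uses Lemma~\ref{lemma1} to show that the compressed resolvents $\varphi_k(\lambda)=((H_k-\lambda)^{-1}J^{-\frac{1}{2}}e_0,J^{-\frac{1}{2}}e_0)$, $k=1,2$, both solve the Nevanlinna-Pick problem~\eqref{NP_ex}, that they actually differ (because $J^{-\frac{1}{2}}e_0\notin\ran(J^{-\frac{1}{2}}HJ^{-\frac{1}{2}}-\lambda)$ for non-real $\lambda$), and finally that the convex combinations $\varphi_\alpha=\alpha\varphi_1+(1-\alpha)\varphi_2$, $\alpha\in(0,1)$, provide infinitely many further solutions. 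Hence failure of self-adjointness forces indeterminacy, which is the contrapositive of the ``only if'' implication.

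I do not anticipate a genuine obstacle. The only bookkeeping point is dichotomy: Proposition~\ref{formalsymm} ensures that the deficiency indices are either $(0,0)$ or $(1,1)$, so exactly one of Propositions~\ref{self} and~\ref{symmetric} applies in each case and together they cover all possibilities. The equivalence of Theorem~\ref{NP_determinate} then falls out immediately.
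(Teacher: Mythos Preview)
Your proposal is correct and matches the paper's own proof, which simply states that the theorem immediately follows from Propositions~\ref{self} and~\ref{symmetric}. Your additional remark that Proposition~\ref{formalsymm} guarantees the $(0,0)$/$(1,1)$ dichotomy is a helpful clarification but is implicit in the paper's argument.
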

\begin{remark}
Other criteria for the Nevanlinna-Pick problems 
to be determinate can be found in~\cite{Garnett},~\cite{KN}.
It is worth noting that, in the matrix case, the Stieltjes type criteria for 
Nevanlinna-Pick problems to be completely indeterminate
were obtained by Yu.~M.~Dyu\-karev in his second doctorate thesis 
(see~\cite{Dyu05},~\cite{Dyu08}).
\end{remark}

\section{Convergence of multipoint Pad\'e approximants}

In this section we prove a Markov type result on convergence of
multipoint diagonal Pad\'e approximants for $\bR_0$-functions.

At first, let us recall that for the symmetric matrix $J_{[0,j]}^{-\frac{1}{2}}H_{[0,j]}J_{[0,j]}^{-\frac{1}{2}}$
the following estimate holds true
\begin{equation}\label{un_b_for_r}
\Vert(J_{[0,j]}^{-\frac{1}{2}}H_{[0,j]}J_{[0,j]}^{-\frac{1}{2}}-\lambda )^{-1}\Vert\le\frac{1}{|\Im\lambda|},\quad j\in\dZ_+.
\end{equation}

Before showing the convergence result, it is natural to obtain 
the precompactness.

\begin{proposition}\label{prec_R1}
The family 
$\{m_{[0,j]}\}_{j=0}^{\infty}$ is precompact in the topology of locally uniform convergence in $\dC\setminus\dR$.
\end{proposition}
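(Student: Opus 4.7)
The plan is to verify the hypotheses of Montel's theorem: that each $m_{[0,j]}$ is holomorphic in $\dC\setminus\dR$ and that the sequence is locally uniformly bounded there. The holomorphy is essentially already established: by Proposition~\ref{detf}, the polynomial $P_{j+1}$ has only real zeros, so the rational function $m_{[0,j]}=-Q_{j+1}/P_{j+1}$ (cf.~\eqref{mformulas}) has all its poles on $\dR$ and is therefore holomorphic off the real line.

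The crucial step is the uniform bound. I would exploit the operator-theoretic representation
\[
m_{[0,j]}(\lambda)=\left((J_{[0,j]}^{-\frac{1}{2}}H_{[0,j]}J_{[0,j]}^{-\frac{1}{2}}-\lambda)^{-1}J_{[0,j]}^{-\frac{1}{2}}e_0,J_{[0,j]}^{-\frac{1}{2}}e_0\right)
\]
from~\eqref{cor_Weyl}. Since $J_{[0,j]}^{-\frac{1}{2}}H_{[0,j]}J_{[0,j]}^{-\frac{1}{2}}$ is self-adjoint, the resolvent bound~\eqref{un_b_for_r} combined with the Cauchy-Schwarz inequality gives
\[
|m_{[0,j]}(\lambda)|\le\frac{1}{|\Im\lambda|}\,\Vert J_{[0,j]}^{-\frac{1}{2}}e_0\Vert^2=\frac{1}{|\Im\lambda|}\left(J_{[0,j]}^{-1}e_0,e_0\right).
\]
By~\eqref{h_con_d}, $\left(J_{[0,j]}^{-1}e_0,e_0\right)\to 1$ as $j\to\infty$, so this sequence is uniformly bounded in $j$, say by some constant $C$.

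Now, given a compact set $K\subset\dC\setminus\dR$, we have $\delta:=\inf_{\lambda\in K}|\Im\lambda|>0$, and hence
\[
\sup_{\lambda\in K}|m_{[0,j]}(\lambda)|\le \frac{C}{\delta}
\]
uniformly in $j$. This is the locally uniform bound required by Montel's theorem, which then yields normality of $\{m_{[0,j]}\}_{j=0}^{\infty}$, equivalently its precompactness in the topology of locally uniform convergence on $\dC\setminus\dR$. There is no real obstacle here: once the two earlier ingredients (the resolvent estimate~\eqref{un_b_for_r} and the boundedness~\eqref{h_con_d} of the diagonal of $J_{[0,j]}^{-1}$) are in place, the argument reduces to a direct application of Montel's theorem.
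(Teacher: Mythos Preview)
Your argument is correct and follows essentially the same route as the paper: rewrite $m_{[0,j]}$ via~\eqref{cor_Weyl}, combine the self-adjoint resolvent bound with Cauchy--Schwarz to get $|m_{[0,j]}(\lambda)|\le (J_{[0,j]}^{-1}e_0,e_0)/|\Im\lambda|$, and then apply Montel. The only cosmetic difference is that the paper invokes the uniform bound~\eqref{dom_inq_tr} (giving the sharp constant $C=1$) rather than the convergence~\eqref{h_con_d}, but either reference suffices.
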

\begin{proof}Let us rewrite the function $m_{[0,j]}$ as follows
\[
m_{[0,j]}(\lambda)=((J_{[0,j]}^{-\frac{1}{2}}H_{[0,j]}J_{[0,j]}^{-\frac{1}{2}}-\lambda )^{-1}J_{[0,j]}^{-\frac{1}{2}}e_0,J_{[0,j]}^{-\frac{1}{2}}e_0).
\]
It follows from the Cauchy-Swarz inequality and~\eqref{dom_inq_tr} that
\begin{equation}\label{pre}
|m_{[0,j]}(\lambda)|=\frac{(J^{-1}_{[0,j]}e_0,e_0)}{|\Im\lambda|}\le \frac{1}{|\Im\lambda|},
\end{equation}t
which, in view of the Montel theorem, implies the precompactness of 
$\{m_{[0,j]}\}_{j=0}^{\infty}$.
\end{proof}

Now we are ready to prove the main result of this section.
\begin{theorem}\label{MarkovMP}
Let a sequence of distinct numbers $\{z_j\}_{j=0}^{\infty}\subset\dC_+$ be given and let
$\f$ be a unique solution of the Nevanlinna-Pick problem~\eqref{NP_ex}. Then
all the multipoint diagonal Pad\'e approximants for $\f$ at
$\{z_0,\overline{z}_0,\dots,z_j,\overline{z}_j,\dots\}$
exist and converge to $\f$ locally uniformly in $\dC\setminus\dR$.
\end{theorem}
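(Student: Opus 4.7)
The plan is to apply a normal-family argument, reducing the problem to pointwise identification of every subsequential limit with $\varphi$, and to carry out that identification through the Weyl disks of Section~5. The precompactness of $\{m_{[0,n]}\}$ is given by Proposition~\ref{prec_R1}; combining this with Vitali's theorem and the symmetry $m_{[0,n]}(\overline\lambda)=\overline{m_{[0,n]}(\lambda)}$, it suffices to prove $m_{[0,n]}(\lambda)\to\varphi(\lambda)$ for every $\lambda$ in the dense subset $\dC_+\setminus\{z_k\}_{k=0}^\infty$ of $\dC_+$; at each interpolation node $z_k$ the equality $m_{[0,n]}(z_k)=w_k=\varphi(z_k)$ is automatic for $n\ge k$.

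Fix such a $\lambda$. By~\eqref{Ach7} and Proposition~\ref{interlace}, $m_{[0,n]}(\lambda)=-Q_{n+1}(\lambda)/P_{n+1}(\lambda)=\omega_{n+1}(\lambda,0)$ lies in the closed Weyl disk ${\bf K}_{n+1}(\lambda)$. Since the Nevanlinna--Pick problem is assumed to be determinate, Theorem~\ref{NP_determinate} makes $J^{-\frac{1}{2}}HJ^{-\frac{1}{2}}$ self-adjoint, and then Theorem~\ref{limitpoint} together with the nesting in Corollary~\ref{Kembed} forces ${\bf K}_{n+1}(\lambda)$ to shrink to a single Weyl point $\omega_\infty(\lambda)$. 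Hence $m_{[0,n]}(\lambda)\to\omega_\infty(\lambda)$.

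The remaining and principal step is to identify $\omega_\infty(\lambda)$ with $\varphi(\lambda)$, which I would do by showing $\varphi(\lambda)\in{\bf K}_{n+1}(\lambda)$ for every $n$. Applying the Schur step of Proposition~\ref{schur_step} to $\varphi$ exactly $n+1$ times yields a tail function $\varphi_{n+1}\in\bR_0\cup\{0\}$; unrolling the resulting continued fraction through the recurrence~\eqref{rec_rel} with the initial data \eqref{InConP}, \eqref{InConP2} writes $\varphi(\lambda)=\omega_{n+1}(\lambda,\tau(\lambda))$ where $\tau(\lambda)$ is an explicit linear-fractional expression in $\varphi_{n+1}(\lambda)$. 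Inserting $\omega=\varphi(\lambda)$ into the right-hand side of the Christoffel--Darboux-type identity~\eqref{forKW} and using the Liouville--Ostrogradsky formula~\eqref{Ostrogr2} reduces the disk inequality~\eqref{WeylBall} to the sign condition on $\Im\varphi_{n+1}(\lambda)/\Im\lambda$ guaranteed by $\varphi_{n+1}\in\bR_0\cup\{0\}$, so $\varphi(\lambda)\in{\bf K}_{n+1}(\lambda)$. Together with the shrinking of the disks, this forces $\varphi(\lambda)=\omega_\infty(\lambda)$, completing the pointwise convergence; Vitali then upgrades it to locally uniform convergence on all of $\dC\setminus\dR$. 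The hardest step is this Schur-based verification that $\varphi(\lambda)\in{\bf K}_{n+1}(\lambda)$, which adapts the classical argument for moment problems in~\cite[Section~I.2.4]{A} to the present linear-pencil setup.
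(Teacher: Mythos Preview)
Your strategy is a genuine alternative to the paper's proof: the paper argues via strong resolvent convergence of the truncations $J_{[0,n]}^{-1/2}H_{[0,n]}J_{[0,n]}^{-1/2}$ to the self-adjoint operator $J^{-1/2}HJ^{-1/2}$ (using the vectors $r_n(\lambda)$ from Proposition~\ref{self} and the density Lemma~\ref{lemma1}), whereas you bypass operators entirely and work with the nested Weyl disks of Section~5. That route is perfectly legitimate and is the classical one in \cite[Section~I.2]{A}; it has the advantage of being more elementary, while the paper's argument makes the resolvent-convergence picture explicit.

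There is, however, a real gap in your ``principal step''. Unrolling the Schur algorithm gives $\varphi(\lambda)=\omega_{n+1}(\lambda,\tau)$ with
\[
\tau=\frac{\varphi(\lambda)P_{n+1}(\lambda)+Q_{n+1}(\lambda)}{\varphi(\lambda)P_n(\lambda)+Q_n(\lambda)}
=-\,b_n^{2}(\lambda-z_n)(\lambda-\overline{z}_n)\,\varphi_{n+1}(\lambda),
\]
and membership in $\mathbf{K}_{n+1}(\lambda)$, via~\eqref{forKW}, is equivalent to $\Im\tau\le 0$. But the factor $(\lambda-z_n)(\lambda-\overline{z}_n)$ is \emph{not} real for $\lambda\in\dC_+$, so $\Im\tau$ does \emph{not} reduce to the sign of $\Im\varphi_{n+1}(\lambda)/\Im\lambda$; a direct computation shows $\Im\tau=\Im\lambda\big(1-\int|t-z_n|^2/|t-\lambda|^2\,d\sigma_{n+1}(t)\big)$, which can have either sign. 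This is precisely where the multipoint situation differs from the Hamburger case, in which the corresponding coefficient is real and positive. The correct adaptation of Akhiezer's argument is not through the tail but through Bessel's inequality: the Remark after~\eqref{CDmain} records that the rational functions $R_k=\widehat P_k+\mathfrak d_{k-1}\widehat P_{k-1}$ are orthonormal in $L^2(d\sigma)$, and one checks (as in the classical second-kind formula) that $\varphi(\lambda)R_k(\lambda)+\widehat Q_k(\lambda)+\mathfrak d_{k-1}\widehat Q_{k-1}(\lambda)$ equals the Fourier coefficient $\int \overline{R_k(t)}/(t-\lambda)\,d\sigma(t)$; then~\eqref{WeylBall} is exactly Bessel's inequality for $1/(t-\lambda)$. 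With this replacement your Weyl-disk proof goes through.
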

\begin{proof}
Proposition~\ref{mformulas} says that the rational function $m_{[0,j]}$ is
the (j+1)th multipoint diagonal Pad\'e approximant. 
Further, according to Theorem~\ref{NP_determinate}, one obviously has that
$J^{-\frac{1}{2}}HJ^{-\frac{1}{2}}$ is self-adjoint in $\ell^2$ and, therefore,
$(J^{-\frac{1}{2}}HJ^{-\frac{1}{2}}-\lambda)^{-1} $ is bounded for $\lambda\in\dC\setminus\dR$.
Let $\psi$ be a finite sequence, that is, $\psi=(\psi_1,\dots,\psi_k,0,0,\dots)^{\top}$.
Then
\[
(H-\lambda J)\psi=(H_{[0,j]}-\lambda J_{[0,j]})\psi=\phi
\]
for sufficiently large $j\in\dZ_+$ and $\phi$ is also a finite sequence. Further,
one obviously has
\begin{equation}\label{srcon}
\left((J^{-\frac{1}{2}}HJ^{-\frac{1}{2}}-\lambda )^{-1}J^{-\frac{1}{2}}\phi, J^{-\frac{1}{2}}e_0\right)=
\lim_{j\to\infty}\left((J_{[0,j]}^{-\frac{1}{2}}H_{[0,j]}J_{[0,j]}^{-\frac{1}{2}}-\lambda )^{-1}J_{[0,j]}^{-\frac{1}{2}}\phi,J_{[0,j]}^{-\frac{1}{2}}e_0\right).
\end{equation}
In particular, formula~\eqref{srcon} is valid for
\[
\phi_n=(HJ^{-\frac{1}{2}}-\lambda J^{\frac{1}{2}})r_n(\lambda),
\]
where $r_n$ is defined by~\eqref{def_r_n}. So, due to~\eqref{help_for_conv}
we have that 
\begin{equation}\label{phi_n_conv1}
J^{-\frac{1}{2}}\phi_n\to J^{-\frac{1}{2}}e_0\quad\text{as}\quad n\to\infty.
\end{equation} 
Moreover, the vectors $\phi_n$ satisfy the following relation
\begin{equation}\label{phi_n_conv2}
J^{-\frac{1}{2}}_{[0,j]}\phi_n\to J^{-\frac{1}{2}}_{[0,j]}e_0\quad\text{as}\quad n\to\infty
\end{equation} 
for $j\in\dZ_+$. To see the latter relation, note that~\eqref{phi_n_conv1} implies
\[
(J^{-\frac{1}{2}}\phi_n,\eta)\to (J^{-\frac{1}{2}}e_0,\eta)\quad\text{as}\quad n\to\infty
\]
for every $\eta\in\ell^2$. Putting $\eta=J^{\frac{1}{2}}J^{-\frac{1}{2}}_{[0,j]}e_k$, $k=0,\dots,j$,
we get~\eqref{phi_n_conv2} from the fact that, in finite-dimensional spaces, the weak convergence is
equivalent to the strong one.
Now, taking into account~\eqref{un_b_for_r}, \eqref{srcon},~\eqref{phi_n_conv1}, and~\eqref{phi_n_conv2}, 
we obtain that~\eqref{srcon} holds true for $\phi=e_0$, that is,
\[
m_{[0,j]}(\lambda)\to m(\lambda)=\f(\lambda)=
((J^{-\frac{1}{2}}HJ^{-\frac{1}{2}}-\lambda )^{-1}J^{-\frac{1}{2}}e_0,J^{-\frac{1}{2}}e_0)
\] 
for any $\lambda\in\dC\setminus\dR$. Finally, the statement of the theorem follows from
the precompactness and the Vitali theorem.
\end{proof}
\begin{remark}
In the case when $\f\in\bR[\alpha,\beta]$ and 
the interpolation points stay away from $[\alpha,\beta]$, 
an analog of the Markov theorem for multipoint diagonal Pad\'e approximants
is well known~\cite{GL},~\cite{StTo} 
(see also~\cite{DZ09} where the operator approach was presented).
In the case when the interpolation points belong to $[-\infty,0)$,
the locally uniform convergence of multipoint Pad\'e approximants
for $\f\in\bR[0,+\infty)$ was proved under the Carleman 
type condition~\cite{Lo78} (see also~\cite{Lo85} where results 
in this direction are reviewed).
It should be also remarked that there are some results on convergence of multipoint 
Pad\'e approximants for rational perturbations of the Cauchy transforms of some complex
measures~\cite{BYa09},~\cite{BYa10}.
\end{remark}
It is a standard fact that the following condition
\begin{equation}\label{BC}
\sum_{k=0}^{\infty}\frac{\Im z_k}{|z_k+i|^2}=+\infty
\end{equation}
implies the determinacy of the corresponding Nevanlinna-Pick problem in $\bR_0$~\cite{Garnett},~\cite{KN}.
Thus, the underlying operator $J^{-\frac{1}{2}}HJ^{-\frac{1}{2}}$ is self-adjoint in $\ell^2$.

\begin{corollary}
If the given sequence $\{z_j\}_{j=0}^{\infty}$ satisfies~\eqref{BC}
then for every $\f\in\bR_0$ all the multipoint diagonal Pad\'e approximants for $\f$ at
$\{z_0,\overline{z}_0,\dots,z_j,\overline{z}_j,\dots\}$
exist and converge to $\f$ locally uniformly in $\dC\setminus\dR$.
\end{corollary}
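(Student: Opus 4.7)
The proof will be a short deduction, essentially chaining together the classical Blaschke-type sufficient condition for determinacy with the machinery developed in Sections~7 and~8. The plan is to take any $\f\in\bR_0$, form the Nevanlinna-Pick data $w_k:=\f(z_k)$, and show that the associated linear pencil falls into the determinate (self-adjoint) case, so that Theorem~\ref{MarkovMP} applies directly.

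First I would fix $\f\in\bR_0$ and consider the interpolation data $\{z_k\}_{k=0}^\infty$, $\{w_k:=\f(z_k)\}_{k=0}^\infty$. By Proposition~\ref{schur_step} applied iteratively (as in Section~2), these data generate the continued fraction~\eqref{ContF} and hence the tridiagonal linear pencil $H-\lambda J$, so all the multipoint diagonal Pad\'e approximants $-Q_{n+1}/P_{n+1}=m_{[0,n]}$ are well defined. This handles the \emph{existence} part of the assertion.

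Next I would invoke the classical fact (cited in the paragraph preceding the corollary from~\cite{Garnett},~\cite{KN}) that the Blaschke-type divergence~\eqref{BC} implies determinacy of the Nevanlinna-Pick problem~\eqref{NP_ex} in the class $\bR_0$. This means that $\f$ is the unique $\bR_0$-function satisfying $\f(z_k)=w_k$ for all $k\in\dZ_+$. By Theorem~\ref{NP_determinate}, determinacy is equivalent to self-adjointness of the operator $J^{-\frac{1}{2}}HJ^{-\frac{1}{2}}$ on $\ell^2$.

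With self-adjointness in hand, Theorem~\ref{MarkovMP} applies verbatim: the multipoint diagonal Pad\'e approximants $m_{[0,n]}$ converge to the unique solution $\f$ locally uniformly on $\dC\setminus\dR$. There is no genuine obstacle here — the content is entirely packaged inside earlier results; the only point that requires citation rather than derivation is the implication \eqref{BC}~$\Rightarrow$~determinacy, which is a well-known theorem in the Nevanlinna-Pick theory and is explicitly attributed to~\cite{Garnett},~\cite{KN} immediately above the corollary.
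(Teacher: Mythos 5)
Your proposal is correct and follows essentially the same route as the paper: the paragraph preceding the corollary cites the classical fact that \eqref{BC} implies determinacy in $\bR_0$, deduces self-adjointness of $J^{-\frac{1}{2}}HJ^{-\frac{1}{2}}$ via Theorem~\ref{NP_determinate}, and then the conclusion is exactly Theorem~\ref{MarkovMP}. Your extra remarks on existence via the Section~2 construction only make explicit what the paper leaves implicit.
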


\begin{remark}
First, note that~\eqref{BC} is sufficient for the Nevanlinna-Pick problem in $\bR_0$ to be determinate
but not necessary (see~\cite[Chapter~IV, Example~4.2]{Garnett}). It should be also noted 
that, under the Szeg\"o condition and the negation of the Blashcke type condition,
the locally uniform convergence of multipoint diagonal Pad\'e approximants for
$\f\in\bR[\alpha,\beta]$ was proved in ~\cite{St00} (see also~\cite{BaKu}).
\end{remark}

Now, we are also able to adapt Theorem~\ref{WeylSolution} for the self-adjoint case.

\begin{theorem} If $J^{-\frac{1}{2}}HJ^{-\frac{1}{2}}$ is self-adjoint in $\ell^2$ then
for every $\lambda\in\dC_+\setminus\{z_k\}_{k=0}^{\infty}$ there holds
\[
\sum_{k=0}^{\infty}|m(\lambda)(\widehat{P}_k(\lambda)+\mathfrak{d}_{k-1}\widehat{P}_{k-1}(\lambda))+
\widehat{Q}_k(\lambda)+\mathfrak{d}_{k-1}\widehat{Q}_{k-1}(\lambda)|^2=
\frac{m(\lambda)-\overline{m(\lambda)}}{\lambda-\overline{\lambda}}.
\]
 \end{theorem}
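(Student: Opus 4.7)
The plan is to derive the identity as the $j\to\infty$ limit of a finite Weyl--circle identity. Writing $T_k := \widehat{P}_k(\lambda) + \mathfrak{d}_{k-1}\widehat{P}_{k-1}(\lambda)$ and $S_k := \widehat{Q}_k(\lambda) + \mathfrak{d}_{k-1}\widehat{Q}_{k-1}(\lambda)$, the starting point is that setting $\omega = m_{[0,j]}(\lambda) = -Q_{j+1}(\lambda)/P_{j+1}(\lambda)$ in~\eqref{forKW} makes $\omega P_{j+1} + Q_{j+1} = 0$, so the right-hand side vanishes and one obtains the exact finite identity
\[
\sum_{k=0}^{j} |m_{[0,j]}(\lambda) T_k + S_k|^2 = \frac{\Im m_{[0,j]}(\lambda)}{\Im \lambda}.
\]
By the self-adjointness hypothesis, Theorem~\ref{MarkovMP} yields $m_{[0,j]}(\lambda) \to m(\lambda)$, so the right-hand side converges to $\Im m(\lambda)/\Im \lambda$.

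To transfer $m_{[0,j]}$ to $m$ inside the sum I would expand
\[
|mT_k+S_k|^2 - |m_{[0,j]}T_k+S_k|^2 = 2\Re\bigl[(m-m_{[0,j]}) T_k\, \overline{m_{[0,j]}T_k+S_k}\bigr] + |m-m_{[0,j]}|^2|T_k|^2,
\]
sum for $k=0,\dots,j$, and apply Cauchy--Schwarz with the finite identity to obtain
\[
\Bigl|\sum_{k=0}^{j}|mT_k+S_k|^2 - \frac{\Im m_{[0,j]}(\lambda)}{\Im\lambda}\Bigr| \le 2|m-m_{[0,j]}|\sqrt{\frac{\Im m_{[0,j]}(\lambda)}{\Im \lambda}}\Bigl(\sum_{k=0}^{j}|T_k|^2\Bigr)^{1/2} + |m-m_{[0,j]}|^2 \sum_{k=0}^{j} |T_k|^2.
\]
The decisive input is the Weyl radius formula~\eqref{Ach9}: since $m_{[0,j]}(\lambda) = \omega_{j+1}(\lambda,0)$ lies on the circle $K_{j+1}(\lambda)$ while $m(\lambda) \in \mathbf{K}_{j+1}(\lambda)$ for every $j$ (Corollary~\ref{Kembed} together with Theorem~\ref{MarkovMP}), one has
\[
|m(\lambda) - m_{[0,j]}(\lambda)| \le 2 r_{j+1}(\lambda) = \frac{2}{|\lambda - \overline{\lambda}| \sum_{k=0}^{j}|T_k|^2}.
\]

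By Theorem~\ref{limitpoint}, self-adjointness of $J^{-\frac{1}{2}}HJ^{-\frac{1}{2}}$ is equivalent to the limit-point condition $\sum_{k=0}^{\infty}|T_k|^2 = \infty$. Substituting the radius estimate into the two error terms above, both are of order $\bigl(\sum_{k=0}^{j}|T_k|^2\bigr)^{-1/2}$ and hence tend to $0$. Therefore $\sum_{k=0}^{j}|mT_k+S_k|^2 \to \Im m(\lambda)/\Im \lambda$, and since this is a monotone partial sum of a non-negative series, the infinite series has the same value, yielding the desired equality. The principal obstacle is precisely this transfer step: a crude bound on $|m-m_{[0,j]}|^2 \sum_{k=0}^{j}|T_k|^2$ would diverge in the limit-point regime, and only the sharp rate $|m-m_{[0,j]}| = O\bigl((\sum_{k=0}^{j}|T_k|^2)^{-1}\bigr)$ from the Weyl radius is strong enough to dominate the $(\sum_{k=0}^{j}|T_k|^2)^{1}$ factor appearing in the quadratic error term.
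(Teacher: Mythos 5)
Your proof is correct and runs along the same lines as the paper's: both arguments rest on the Weyl-disk machinery, namely that under self-adjointness the disks ${\bf K}_j(\lambda)$ shrink to the single point $m(\lambda)$ (Theorem~\ref{MarkovMP}, Corollary~\ref{Kembed}, the inequality~\eqref{WeylBall} and the radius formula~\eqref{Ach9}). In fact your quantitative transfer from $m_{[0,j]}$ (which lies on the circle $K_{j+1}(\lambda)$) to $m$, using $|m-m_{[0,j]}|=O\bigl((\sum_{k\le j}|\widehat{P}_k+\mathfrak{d}_{k-1}\widehat{P}_{k-1}|^2)^{-1}\bigr)$, supplies exactly the ``$\ge$'' half of the equality that the paper's two-line proof leaves implicit, so it is a welcome elaboration rather than a different method.
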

\begin{proof}
According to Theorem~\ref{MarkovMP} and~\eqref{Ach7}, we have that ${\bf K}_j(\lambda)\to m(\lambda)$ as $j\to\infty$. 
Now, the statement directly follows from  Corollary~\ref{Kembed} and the inequality~\eqref{WeylBall}.
\end{proof}

\noindent{\bf Acknowledgments}. 
This work was partially done when I was visiting the University of Sciences and Technologies of Lille 
as a postdoc. I would like to express my gratitude to Professor B.~Beckermann for
organizing my visit, the hospitality, and many useful discussions, which stimulated me to write this paper.
I am grateful to Professor A.S.~Zhedanov for numerous discussions, which improved the paper. 
Besides, I am deeply indebted to A.~Kostenko and K.~Simonov for the careful reading of the manuscript
and giving considerable comments.
Finally, I would also like to thank the anonymous referees for helpful remarks and suggestions. 

\end{document}